 \numberwithin{equation}{section}
\def\bR{{\mathbb{R}}}
\def\R{{\mathbb{R}}}
\def\bZ{{\mathbb{Z}}}
\def\cB{{\mathscr{B}}}
\def\cC{{\mathscr{C}}}
\def\cD{{\mathscr{D}}}
\def\cF{{\mathscr{F}}}
\def\cG{{\mathscr{G}}}
\def\cH{{\mathscr{H}}}
\def\cI{{\mathscr{I}}}
\def\cT{{\mathscr{T}}}
\def\sP{\mathfrak{P}}
\def\one{\mathds{1}}
\def\ve{\varepsilon}
\renewcommand{\d}{{\partial}}
\def\lec{\lesssim}
\def\gec{\gtrsim}
\DeclareMathOperator{\diam}{diam}
\def\dist{\mathop\mathrm{dist}} 						
\newcommand{\ps}[1]{\left( #1 \right)}
\newcommand{\ck}[1]{\left\{#1 \right\}}
\def\XXint#1#2#3{{\setbox0=\hbox{$#1{#2#3}{\int}$ }
\vcenter{\hbox{$#2#3$ }}\kern-.58\wd0}}
\theoremstyle{plain}
\newtheorem*{mainlemma}{Main Lemma}
\newtheorem{theorem}{Theorem}
\newtheorem{corollary}[theorem]{Corollary}
\newtheorem{lemma}[theorem]{Lemma}
\theoremstyle{definition}
\newtheorem{definition}[theorem]{Definition}
\newtheorem{remark}[theorem]{Remark}
\numberwithin{equation}{section}
\numberwithin{theorem}{section}
\def\dF{{\mathcal{F}}}
\def\bR{{\mathbb{R}}}
\def\bZ{{\mathbb{Z}}}
\def\cB{{\mathscr{B}}}
\def\cC{{\mathscr{C}}}
\def\cD{{\mathscr{D}}}
\def\cF{{\mathscr{F}}}
\def\cG{{\mathscr{G}}}
\def\cH{{\mathscr{H}}}
\def\cI{{\mathscr{I}}}
\def\sP{\mathfrak{P}}
\def\one{\mathds{1}}
\newcommand{\wh}{\widehat}
\def\ve{\epsilon} 
\renewcommand{\d}{{\partial}}
\def\lec{\lesssim}
\def\gec{\gtrsim}
\newcommand{\hd}{\mathscr{H}^d}
\newcommand{\hdc}{\mathscr{H}^d_\infty}
\newcommand{\spt}{\mathrm{spt}}
\newcommand{\dt}{\frac{dt}{t}}
\newcommand{\dr}{\frac{dr}{r}}
\newcommand{\chara}{\mathds{1}}
\newcommand{\lesssimt}[1]{\ensuremath{\stackrel{#1}{\lesssim}}}
\newcommand{\Stop}{\mathop\mathrm{\textbf{Stop}}}
\newcommand{\Next}{\mathop\mathrm{\textbf{Next}}}
\newcommand{\Top}{\mathop\mathrm{\textbf{Top}}}
\newcommand{\wt}{\widetilde}
\newcommand{\bad}[1]{\cB^{\text{#1}}}
\newcommand{\good}[1]{\cG^{\text{#1}}}
\newcommand{\Child}{\text{Child}}
\newcommand{\Approx}{\text{Approx}}
  \DeclareFontFamily{U}{mathb}{\hyphenchar\font45} 
\DeclareFontShape{U}{mathb}{m}{n}{
      <5> <6> <7> <8> <9> <10> gen * mathb
      <10.95> mathb10 <12> <14.4> <17.28> <20.74> <24.88> mathb12
      }{}
\DeclareSymbolFont{mathb}{U}{mathb}{m}{n}
\DeclareMathSymbol{\toitself}{3}{mathb}{"FD}  
\def\@tocline#1#2#3#4#5#6#7{\relax
  \ifnum #1>\c@tocdepth 
  \else
    \par \addpenalty\@secpenalty\addvspace{#2}%
    \begingroup \hyphenpenalty\@M
    \@ifempty{#4}{%
      \@tempdima\csname r@tocindent\number#1\endcsname\relax
    }{%
      \@tempdima#4\relax
    }%
    \parindent\z@ \leftskip#3\relax \advance\leftskip\@tempdima\relax
    \rightskip\@pnumwidth plus4em \parfillskip-\@pnumwidth
    #5\leavevmode\hskip-\@tempdima
      \ifcase #1
       \or\or \hskip 1em \or \hskip 2em \else \hskip 3em \fi%
      #6\nobreak\relax
    \dotfill\hbox to\@pnumwidth{\@tocpagenum{#7}}\par
    \nobreak
    \endgroup
  \fi}
  \def\Bad{\mathrm{Bad}}
\def\Top{\textrm{Top}}
\def\Stop{\textrm{Stop}}
\def\Tree{\textrm{Tree}}
\def\Badsh{\textrm{Bad}}
\begin{document}

\author{Jonas Azzam}
\address{School of Mathematics, University of Edinburgh, JCMB, Kings Buildings,
Mayfield Road, Edinburgh,
EH9 3JZ, Scotland.}
\email{j.azzam ``at" ed.ac.uk}

\author{Michele Villa}
\address{School of Mathematics, University of Edinburgh, JCMB, Kings Buildings,
Mayfield Road, Edinburgh,
EH9 3JZ, Scotland.}
\email{m.villa-2 ``at" sms.ed.ac.uk}

\keywords{Rectifiability, Traveling Salesman, beta numbers, coronizations, corona decompositions}
\subjclass[2010]{28A75, 28A78, 28A12}

\title[Quantitative Comparisons of Multiscale Geometric Properties]{Quantitative Comparisons of Multiscale Geometric Properties}

\begin{abstract}
We generalize some characterizations of uniformly rectifiable (UR) sets to sets whose Hausdorff content is lower regular (and in particular, do not need to be Ahlfors regular).  For example, David and Semmes showed that, given an Ahlfors $d$-regular set $E$, if we consider the set $\cB$ of surface cubes (in the sense of Christ and David) near which $E$ does not look approximately like a union of planes, then $E$ is UR if and only if $\cB$ satisfies a Carleson packing condition, that is, for any surface cube $R$,
\[
\sum_{Q\subseteq R\atop Q\in \cB} (\diam Q)^{d} \lec (\diam R)^{d}.\] 
We show that, for lower content regular sets that aren't necessarily Ahlfors regular, if $\beta_{E}(R)$ denotes the square sum of $\beta$-numbers over subcubes of $R$ as in the Traveling Salesman Theorem for higher dimensional sets \cite{AS18}, then
\[
\cH^{d}(R)+\sum_{Q\subseteq R\atop Q\in \cB} (\diam Q)^{d}\sim  \beta_{E}(R).
\]
We prove similar results for other uniform rectifiability critera, such as the Local Symmetry, Local Convexity, and Generalized Weak Exterior Convexity conditions.

En route, we show how to construct a corona decomposition of any lower content regular set by Ahlfors regular sets, similar to the classical corona decomposition of UR sets by Lipschitz graphs developed by David and Semmes.

\end{abstract}

\maketitle

\tableofcontents

\section{Introduction}

\subsection{Background}
A set $E\subseteq \R^{n}$ is said to be {\it $d$-rectifiable
} if it can be covered up to Hausdorff $d$-measure zero by Lipschitz images of $\R^{d}$. While classifying rectifiable sets is a classical problem dating back to Besicovitch, starting in the late 80's, geometric measure theorists and harmonic analysts began to study rectifiability in a quantitative manner with an eye on applications to harmonic analysis, particularly singular integrals, analytic capacity, and harmonic measure. 

Much of this work has focused on classifying when Ahlfors regular sets are uniformly rectifiable, which was initiated by David and Semmes in their seminal texts  \cite{DS,of-and-on}. Recall that a set $E\subseteq \bR^{n}$ is {\it Ahlfors $d$-regular} if there is $A>0$ so that
\begin{equation}\label{e:AR}
  r^{d}/A\leq \cH^{d}(B(\xi,r)\cap E) \leq Ar^{d} \mbox{ for }\xi\in E, r\in (0,\diam E)
\end{equation}
and is {\it uniformly rectifiable (UR)} if it has {\it $E$ has big pieces of Lipschitz images (BPLI)}, i.e. there are constants $L,c>0$ so  for all $\xi\in E$ and $r\in (0,\diam E)$, there is an $L$-Lipschitz map $f:\bR^{d}\rightarrow \bR^{n}$ such that $\cH^{d}(f(\bR^{d})\cap B(\xi,r))\geq cr^{d}$.

 Characterizations of uniformly rectifiable sets laid out in the aforementioned texts and in later papers have been indispensable for several important problems in harmonic analysis \cite{NTV14} and harmonic measure  \cite{HM14,HLMN17,AHMNT17}. On one hand, being uniformly rectifiable may imply some nice estimates on a set's multiscale geometry that can be useful for a particular problem (such as \cite{AHMNT17}); conversely, if one is trying to establish that a set has some (uniformly) rectifiable structure in a given problem, the settings of that problem may more easily imply the criteria for one characterization of uniform rectifiability than another (as in \cite{HM14,NTV14,HLMN17}).

We will define some of these criteria here. Let $\cD$ denote the Christ-David cubes for $E$ (see Theorem \ref{t:Christ} below for their definition and the relevant notation we will use below). We say a family of cubes $\cC$ satisfies a {\it Carleson packing condition} if there is a constant $C$ so that for all $R\in \cD$,
\[
\sum_{Q\in \cC \atop Q\subseteq R} \ell(Q)^{d}\leq C\ell(R)^{d}. \]

By Theorem \ref{t:Christ}, for each cube $Q\in \cD$, there is a ball $B_{Q}$ centered on and containing $Q$ of comparable size. Given  two closed sets $E$ and $F$, and $B$ a set we denote
\[
d_{B}(E,F)=\frac{2}{\diam B}\max\left\{\sup_{y\in E\cap B}\dist(y,F), \sup_{y\in F\cap  B}\dist(y,E)\right\}\]

For $C_{0}>0$, and $\ve>0$, let
\[
{\rm BWGL}(C_0,\ve)=\{Q\in \cD| \;\; d_{C_0B_{Q}}(E,P)\geq \ve \;\; \mbox{ for all $d$-planes}P\}. 
\]
BWGL stands for the {\it bilateral weak geometric lemma.} David and Semmes showed in \cite{of-and-on} that $E$ is UR if and only if for every $C_0\geq 1$ there is  $\ve>0$ sufficiently small so that ${\rm BLWG}(C_0,\ve)$ satisfies a Carleson packing condition (with constant depending on $\ve$). 

Another important classification from \cite{of-and-on} is {\it bilateral approximation uniformly by planes} (BAUP): for $R\in \cD$ and $\ve>0$, let 
\begin{align}\label{e:BAUP-def}
   {\rm BAUP}(C_0,\ve)=\{Q\in \cD|\; d_{C_0B_{Q}}(E,U)\geq \ve, \;U\mbox{ is a union of $d$-planes}\}.  
\end{align}
David and Semmes showed that $E$ is UR if and only if ${\rm BAUP}(C_0,\ve)$ satisfies a Carleson packing condition each $C_0>1$ and $\ve>0$ small enough (depending on $C_0$). This was a key tool in \cite{HLMN17} in showing that the weak-$A_{\infty}$ condition for harmonic measure implies UR, and also was key in Nazarov, Tolsa, and Volberg's solution to David and Semmes' conjecture in codimension 1 \cite{NTV14}. 

The focus on Ahlfors regular sets is due to the fact that Hausdorff measure on the set is rather well behaved, and so techniques like stopping-time arguments on dyadic cubes in the Euclidean setting often translate over to this non-smooth setting. The motivation of the current paper, however, is to try and obtain similar estimates on multiscale geometry that exist for uniformly rectifiable sets, but instead for sets that are not Ahlfors regular. Not all quantitative results on rectifiability are in the Ahlfors regular setting. The classical example is the {\it Analyst's Traveling Salesman Theorem} stated below, which will serve as a model for the kind of results we are after.

For  sets $E,B\subseteq \bR^{n}$,  define
\begin{equation}
\beta_{E,\infty}^{d}(B)=\frac2{\diam(B)}\inf\limits_L\sup\{\dist(y,L)|y\in E\cap B\}
\label{e:euclidean-beta}
\end{equation}
where $L$ ranges over  $d$-planes in $\bR^{n}$.
Thus, $\beta_{E,\infty}^{d}(B)\diam(B)$ is the width of the smallest tube containing $E\cap B$.

\begin{theorem}(Jones: $\bR^2$  \cite{Jon90}; Okikiolu: $\bR^n$  \cite{Oki92},Schul \cite{Sch07}) Let  $n\geq 2$.
There is a $C=C(n)$ such that the following holds.
Let $E\subset \bR^n$.
Then there is  a connected set $\Gamma\supseteq E$ such that
\begin{equation}
\cH^{1}(\Gamma)\lec_{n} \diam E+\sum_{Q\in \Delta\atop Q\cap E\neq\emptyset} \beta_{E,\infty}^{1}(3Q)^{2}\diam(Q).
\label{e:betaE}
\end{equation}
Conversely, if $\Gamma$ is connected and $\cH^{1}(\Gamma)<\infty$, then 
\begin{equation}
 \diam \Gamma+\sum_{Q\in \Delta\atop Q\cap \Gamma\neq\emptyset} \beta_{\Gamma,\infty}^{1}(3Q)^{2}\diam(Q) \lec_{n}  \cH^{1}(\Gamma).
\label{e:beta_gamma}
\end{equation}
\label{t:TST}
\end{theorem}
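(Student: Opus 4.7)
The plan is to treat the two inequalities separately via multiscale dyadic analysis. For the lower bound \eqref{e:beta_gamma}, the key geometric input is that if $\Gamma$ is connected and crosses a ball $B$, then the length of $\Gamma\cap B$ exceeds the chord by an amount on the order of $\beta_{\Gamma,\infty}^{1}(B)^{2}\diam B$, which is Pythagoras applied to the deviation from the best-approximating line $L_B$. I would fix the standard dyadic grid $\Delta$, pick for each cube $Q$ with $\Gamma\cap 3Q\neq\emptyset$ an optimal line $L_Q$ realising $\beta_{\Gamma,\infty}^{1}(3Q)$, and compare $L_Q$ against $L_{Q'}$, where $Q'$ is the dyadic parent of $Q$; the angle between them is controlled by $\beta_{\Gamma,\infty}^{1}(3Q)+\beta_{\Gamma,\infty}^{1}(3Q')$. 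A telescoping / near-orthogonality argument in the style of Jones then converts $\sum_Q \beta_{\Gamma,\infty}^{1}(3Q)^{2}\diam(Q)$ into a controlled sum of actual arclength increments along $\Gamma$, bounded by $\cH^{1}(\Gamma)$.

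For the upper bound \eqref{e:betaE}, I would build $\Gamma$ scale by scale. Fix a maximal $2^{-k}$-separated net $X_k\subseteq E$ at scale $k$, starting from $k_0$ with $2^{-k_0}\sim\diam E$. Connect $X_{k_0}$ by a Steiner-like graph $\Gamma_{k_0}$ of length $\lesssim \diam E$, then inductively define $\Gamma_k\supseteq\Gamma_{k-1}$ by, for each $x\in X_k$, adjoining a short arc joining $x$ to the nearest point of $X_{k-1}$, but routing that arc along the approximating line $L_Q$ of the cube $Q\in\Delta$ at scale $k$ containing $x$. The point is that the extra length per cube is of order $\beta_{E,\infty}^{1}(3Q)^{2}\ell(Q)$ when $\beta_{E,\infty}^{1}(3Q)$ is small, and trivially $\lesssim \ell(Q)$ otherwise, which can be absorbed into the Carleson sum above a fixed threshold. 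Setting $\Gamma:=\overline{\bigcup_k \Gamma_k}$ then produces a connected superset of $E$ of the desired length.

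The main obstacle is the upper bound, and specifically the \emph{quadratic} accumulation of savings across scales: the naive bound $\cH^{1}(\Gamma_k\setminus\Gamma_{k-1})\lesssim \sum_{Q\text{ at scale }k}\ell(Q)$ is far too lossy, and one really needs the Pythagorean improvement $\beta^{2}\ell$ in place of $\beta\ell$. Achieving this requires positioning the newly added arcs so that when $Q'\supset Q$ are consecutive scales, the transition between $L_{Q'}$ and $L_{Q}$ happens transversely enough to gain an extra factor of $\beta_{E,\infty}^{1}(3Q)$. I would follow Okikiolu's approach for general $\bR^{n}$, in which the reconciliation across scales is carried out by replacing straight segments between net points by short polygonal paths staying close to the relevant $L_Q$'s, together with a careful bookkeeping of how much each cube contributes; the lower bound, by contrast, is more routine once the single-ball Pythagoras estimate is established.
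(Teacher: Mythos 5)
You should first be aware that the paper does not prove Theorem \ref{t:TST} at all: it is quoted with attribution to Jones, Okikiolu and Schul, and the appendix, despite announcing a proof of ``Theorem \ref{t:TST}'', in fact only establishes the comparability \eqref{e:allcomparable} needed to justify the reformulation of the higher-dimensional result of \cite{AS18} (Theorem \ref{t:AS}). So the comparison to make is between your sketch and the classical proofs you yourself cite, and as such your proposal is an outline of those proofs rather than a proof: both halves are ultimately deferred (``I would follow Okikiolu's approach''), which is legitimate as a citation --- it is exactly what the present paper does --- but leaves the essential work undone, and in places your description of the mechanism is not the one that actually produces the quadratic bounds.

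Concretely, for the lower bound \eqref{e:beta_gamma} the per-ball estimate ``length exceeds the chord by $\gtrsim\beta_{\Gamma,\infty}^{1}(3Q)^{2}\diam Q$'' is only valid when the $\beta$-number of $3Q$ is realized by points on the same strand of $\Gamma$ that crosses $3Q$; in general $\Gamma\cap 3Q$ has many components, the deviating point may lie on a strand disjoint from the crossing one, and the gain must instead come from counting multiple crossings. Worse, the surplus extracted in a cube overlaps with the surplus extracted in neighbouring and ancestor cubes, so the heart of Jones/Okikiolu/Schul is a bookkeeping scheme (a decomposition of $\Gamma$ into arcs with each piece of arclength charged boundedly often per scale and summably over scales); your proposed ``telescoping / near-orthogonality'' between $L_Q$ and its parent's line does not substitute for this, since an angle bound of the form $\angle(L_Q,L_{Q'})\lesssim\beta_{\Gamma,\infty}^{1}(3Q)+\beta_{\Gamma,\infty}^{1}(3Q')$ by itself gives no route from $\sum\beta^{2}\ell$ to $\cH^{1}(\Gamma)$. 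For the upper bound \eqref{e:betaE}, the quadratic gain in the construction is not obtained by ``routing arcs along $L_Q$ transversely to $L_{Q'}$''; it is the elementary insertion inequality $|a-x|+|x-b|-|a-b|\lesssim \dist(x,[a,b])^{2}/|a-b|$, applied when a new net point $x$ at scale $2^{-k}$, lying within about $\beta_{E,\infty}^{1}(3Q)2^{-k}$ of an existing edge $[a,b]$ of comparable length, is spliced into the current curve; one must then check that each cube is charged only boundedly often, handle the cubes with $\beta\gtrsim 1$ (as you note) and the cubes where $E\cap 3Q$ is too sparse for the splicing to be local, and verify connectedness and containment of $E$ in the limit. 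As written, the key step ``the extra length per cube is of order $\beta^{2}\ell(Q)$'' is asserted rather than derived, so the proposal identifies the right strategy and the right difficulty but does not close it.
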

This was first shown by Jones in \cite{Jon90} in the plane, then Okikiolu in $\R^{n}$ \cite{Oki92}, and finally in Hilbert space by Schul \cite{Sch07}, though the statement is different than above. There are also some partial and complete generalizations that hold for curves in other metric spaces \cite{DS17,DS19,FFP07,LS16a,LS16b,Li19}.

An analogue for $d$-dimensional of the second half of Theorem \ref{t:TST} is false due to Fang (see \cite{AS18} for a proof). David and Semmes, however, coined a different definition of a $\beta$-number in terms of which they gave a classification of uniformly rectifiable sets. In \cite{AS18}, the first author and Schul altered their definition to get a version of Theorem \ref{t:TST} for higher dimensional sets, which we describe now.

For a set $E$, a ball $B$, and a $d$-dimensional plane $L$, define

\[
\beta_{E}^{d,p}(B,L)= \ps{\frac{1}{r_{B}^{d}}\int_{0}^{1}\cH^{d}_{\infty}(\{x\in B\cap E| \dist(x,L)>t r_{B}\})t^{p-1}dt}^{\frac{1}{p}}\]
where $r_{B}$ is the radius of $B$, and set 
\[
\beta_{E}^{d,p}(B)=\inf\{ \beta_{E}^{d,p}(B,L)| L\mbox{ is a $d$-dimensional plane in $\bR^{n}$}\}.\]

If $E$ is Ahlfors $d$-regular and we replace $\cH^{d}_{\infty}$ with $\cH^{d}$, this is the $\beta$-number David and Semmes used. However, the $d$-dimensional traveling salesman will be stated for lower content regular sets.

\begin{definition}
A set $E\subseteq \bR^{n}$ is said to be {\it $(c,d)$-lower content regular} in a ball $B$ if 
\[
\cH^{d}_{\infty}(E\cap B(x,r))\geq cr^{d} \;\; \mbox{for all $x\in E\cap B$ and $r\in (0,r_{B})$}.\]
\end{definition}

We can now state the result from \cite{AS18}. It is phrased slightly differently from there, but we justify the reformulation in the appendix. 

\begin{theorem}
Let $1\leq d<n$ and $E\subseteq \bR^{n}$ be a closed set. Suppose that $E$ is $(c,d)$-lower content regular and let $\cD$ denote the Christ-David cubes for $E$. Let $C_0>1$. Then there is $\ve>0$ small enough so that the following holds. 
Let $1\leq p<p(d)$ where
\begin{equation}
\label{e:pd}
p(d):= \left\{ \begin{array}{cl} \frac{2d}{d-2} & \mbox{if } d>2 \\
 \infty & \mbox{if } d\leq 2\end{array}\right. . 
 \end{equation}
For $R\in \cD$, let
    \[
  {\rm BWGL}(R)=  {\rm BWGL}(R,\ve,C_0)=\sum_{ Q\in {\rm BWGL}(\ve,C_0) \atop Q\subseteq R } \ell(Q)^{d}. 
    \]
    and
    \[
    \beta_{E,A,p}(R) :=\ell(R)^{d}+\sum_{Q\subseteq R} \beta_{E}^{d,p}(AB_Q)^{2}\ell(Q)^{d}.
    \]
    Then for $R\in \cD$,
\begin{equation}
\label{e:tst}
\cH^{d}(R)+  {\rm BWGL}(R,\ve,C_0)
 \sim_{A,n,c,p,C_0 \ve} \beta_{E,A,p}(R).
 \end{equation}
\label{t:AS}
\end{theorem}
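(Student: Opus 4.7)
My plan is to establish the two-sided equivalence by treating each direction separately, leveraging the higher-dimensional Traveling Salesman estimate as the engine and combining it with an analysis of how BWGL relates to $\beta$-numbers under lower content regularity.

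For the direction $\cH^d(R) + {\rm BWGL}(R) \lesssim \beta_{E,A,p}(R)$, the content bound $\cH^d(R) \lesssim \beta_{E,A,p}(R)$ is the main thrust of \cite{AS18}: one builds an approximating union of Lipschitz images using the $\beta$-minimizing planes at each scale, controls its measure by the $\beta^2$-sum in the style of Jones, and uses it to cover $E \cap R$. For the BWGL term it suffices to prove a pointwise estimate: for each $Q \in {\rm BWGL}(C_0,\ve)$, $\beta_E^{d,p}(AB_Q)^2 \gtrsim_\ve 1$, so that $\sum_{Q \in {\rm BWGL}, Q \subseteq R} \ell(Q)^d \lesssim_\ve \sum_Q \beta_E^{d,p}(AB_Q)^2 \ell(Q)^d$. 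Fix such $Q$ and let $P^{*}$ be the $\beta$-minimizing plane for $AB_Q$. By the definition of BWGL, either (a) some $x \in E \cap C_0 B_Q$ is at distance $\gtrsim \ve \ell(Q)$ from $P^{*}$, or (b) some $y \in P^{*} \cap C_0 B_Q$ is at distance $\gtrsim \ve \ell(Q)$ from $E$. Case (a) is immediate: lower content regularity around $x$ yields a ball of $\cH^d_\infty$-mass $\gtrsim (\ve\ell(Q))^d$ at distance $\gtrsim \ve\ell(Q)$ from $P^{*}$, directly lower bounding $\beta_E^{d,p}(AB_Q,P^{*})$. Case (b) requires more care because $\beta_E^{d,p}$ is one-sided: a hole in $E$ of size $\ve \ell(Q)$ near $y$ must be converted into transverse $E$-mass away from $P^{*}$ via a perturbation of the plane combined with lower content regularity at a comparable scale.

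For the reverse direction $\beta_{E,A,p}(R) \lesssim \cH^d(R) + {\rm BWGL}(R)$, a pointwise comparison fails because $\sum_{Q\subseteq R} \ell(Q)^d$ generally diverges in the non-Ahlfors-regular setting. The plan is a corona-type decomposition: partition $\{Q \subseteq R\}$ into a forest of trees $\{\tree(R_j)\}_j$, where along each tree $E$ stays bilaterally close to a single $d$-plane $L_j$ at all relevant scales. Stopping times declare a new top whenever the best-fit plane rotates substantially, the $\beta$-number becomes too large, or BWGL triggers. Inside a tree, $E$ approximates $L_j$, hence is effectively Ahlfors $d$-regular at the relevant scales, and one adapts the classical David--Semmes argument to obtain $\sum_{Q \in \tree(R_j)} \beta_E^{d,p}(AB_Q)^2 \ell(Q)^d \lesssim \ell(R_j)^d$. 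The tops split into BWGL cubes (absorbed by ${\rm BWGL}(R)$) and genuine "turning" tops, whose packing $\sum_j \ell(R_j)^d$ is controlled by $\cH^d(R)$ via lower content regularity.

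The main obstacle is the mismatch between the one-sided $\beta_E^{d,p}$ and the bilateral $d_{C_0B_Q}$, especially the hole case in the first direction: turning a missing piece of $P^{*}$ into excess $E$-mass requires descending to subcubes while exploiting lower content regularity, and controlling how the excess at smaller scales aggregates back to the scale of $Q$. A secondary difficulty is running the within-tree Ahlfors-regular argument without assuming global Ahlfors regularity; the natural fix, foreshadowed in the abstract, is to build a corona decomposition of the lower content regular set $E$ by Ahlfors regular pieces and apply the David--Semmes machinery to each piece as a surrogate for $E$ at the tree's scales.
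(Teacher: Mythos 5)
Your first direction contains a step that is false as stated, and it is the load-bearing step: the claimed pointwise bound $\beta_E^{d,p}(AB_Q)^2\gtrsim_\ve 1$ for every $Q\in{\rm BWGL}(C_0,\ve)$ does not hold, and your case (b) cannot be repaired in the way you suggest. Take $E=\{x\in\R^d:|x|\ge 1\}\times\{0\}^{n-d}\subseteq\R^n$, a closed $(c,d)$-lower content regular set contained in a $d$-plane. Then $\beta_E^{d,p}(B)=0$ for \emph{every} ball $B$, yet every cube $Q$ near the boundary of the hole with $\ell(Q)\lesssim 1$ lies in ${\rm BWGL}(C_0,\ve)$ for a fixed $\ve$: any candidate plane either misses points of $E$ or passes through the hole, where it has points at distance $\gtrsim\ell(Q)$ from $E$. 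So a hole contributes nothing to the one-sided content betas at any scale, and there is no ``transverse $E$-mass at a comparable scale'' to convert it into, nor any excess at smaller scales to aggregate; your proposed fix for case (b) is structurally unworkable. The true inequality ${\rm BWGL}(R,\ve,C_0)\lesssim\beta_{E,A,p}(R)$ survives only because $\beta_{E,A,p}(R)$ contains the additive term $\ell(R)^d$ (in the example above the BWGL sum is $\lesssim\ell(R)^d$), and the actual proof in the cited work charges such cubes through the construction of an approximating (Ahlfors regular, Reifenberg-type) surface and its measure, not through a cube-by-cube comparison of BWGL with $\beta$. Relatedly, in your reverse direction the assertion that bilateral closeness to a plane along a tree makes $E$ ``effectively Ahlfors regular'' is also not right (Reifenberg-flat sets need not be upper regular, and can even have dimension $>d$); there too the argument must pass through an approximating regular surface rather than through $E$ itself.

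Separately, note that the paper does not reprove this theorem at all: it is quoted from Azzam--Schul \cite{AS18}, and the appendix only justifies the reformulation, namely that $\beta_{E,A,p}(R)\sim_{A,p}\beta_{E,3,2}(R)$ (via Lemmas 2.11 and 2.13 of \cite{AS18} and absorbing the change of ball radius by passing to ancestors), after which the two halves of the original statement are combined. So even a fully correct version of your sketch would amount to re-deriving the external theorem rather than reproducing the paper's argument; as written, though, the first direction has the genuine gap described above.
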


Since all these values are comparable for all admissible values of $A$ and $p$, below we will simply let 
\[
\beta_{E}(R) :=\beta_{E,3,2}.
\]

The presence of $  {\rm BWGL}(R,\ve,C_0)$ may seem odd, but it disappears in some natural situations. It is just zero if $E$ is $\ve$-Reifenberg flat, for example (c.f. \cite{DT12} for this definition). When $d=n-1$ and $E$ is satisfies Condition B, we have that for any cube $R\subseteq E$,
\[
  {\rm BWGL}(R,\ve,C_0) \lec \cH^{d}(R).\] 
In an upcoming paper, the second author will show that this same estimate occurs for the higher codimensional generalized Semmes surfaces introduced by David in \cite{Dav88} (check there for these definitions). In these scenarios, we then have the more natural looking estimate (more closely resembling \eqref{e:betaE})
\[
 \beta_{E}(R) \sim_{A,n,c, \ve} \cH^{d}(R).
 \]
 
Even in the general case, the higher dimensional Traveling Salesman Theorem above says that ${\rm BWGL}(R,\ve,C_0)$  has some meaning if we compute the sum for a non-Ahlfors regular set: even though it does not necessarily satisfy a Carleson packing condition, it is comparable to the square sum of $\beta$'s for any lower regular set. This opens the question of whether the same holds for sums over other cube families for which a Carleson packing condition would characterize UR sets in the Ahlfors regular setting. 

In this paper, we show this is indeed the case for a large class of the original UR characterizations developed by David and Semmes. A consequence of our results is the following (see Section \ref{s:BAUP} for its proof):

\begin{theorem}
\label{t:baup}
Let $E\subseteq \R^{n}$ be a $(c,d)$-lower content regular set with Christ-David cubes $\cD$. For $R\in \cD$, define
\[
{\rm BAUP}(R,C_0,\ve)=\sum_{Q\subseteq R\atop Q\in{\rm BAUP}(C_0,\ve)}\ell(Q)^{d}.
\]
For all $R\in \cD$, $C_0>1$, and $\ve>0$ small enough depending on $C_0$ and $c$,
\begin{equation}
\cH^{d}(R)+{\rm BAUP}(R,C_0,\ve)\sim_{C_{0},\ve,c} \beta_{E}(R).
\end{equation}
\end{theorem}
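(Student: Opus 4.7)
The plan is to reduce Theorem~\ref{t:baup} to the higher-dimensional Traveling Salesman Theorem (Theorem~\ref{t:AS}) and then invoke the corona decomposition of $E$ by Ahlfors regular sets developed earlier in this paper. Since Theorem~\ref{t:AS} gives $\cH^d(R) + {\rm BWGL}(R,\ve_1,C_1) \sim \beta_E(R)$, it suffices to prove $\cH^d(R) + {\rm BAUP}(R,C_0,\ve) \sim \cH^d(R) + {\rm BWGL}(R,\ve_1,C_1)$ for appropriate parameters. The easy inequality $\cH^d(R) + {\rm BAUP}(R,C_0,\ve) \lesssim \beta_E(R)$ is immediate: a single $d$-plane is a (trivial) union of $d$-planes, so ${\rm BAUP}(C_0,\ve) \subseteq {\rm BWGL}(C_0,\ve)$, and Theorem~\ref{t:AS} supplies the bound.

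For the harder direction ${\rm BWGL}(R,\ve_1,C_1) \lesssim \cH^d(R) + {\rm BAUP}(R,C_0,\ve)$, I would first apply the corona decomposition to obtain a partition of the subcubes of $R$ into stopping-time trees $\Tree(T)$ indexed by top cubes $T$, each equipped with an Ahlfors $d$-regular set $E_T$ that approximates $E$ on the balls $C_1 B_Q$ for $Q \in \Tree(T)$. The stopping rule would be designed so that encountering a BAUP cube triggers a stop, which together with the usual flatness/regularity stopping conditions gives a top-cube Carleson packing of the form $\sum_{T\subseteq R} \ell(T)^d \lesssim \cH^d(R) + {\rm BAUP}(R,C_0,\ve)$. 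On each tree, the Hausdorff closeness of $E$ and $E_T$ on the balls $C_1 B_Q$ is then used to transfer the two conditions: $Q \in {\rm BWGL}(E,\ve_1,C_1) \cap \Tree(T)$ forces $Q \in {\rm BWGL}(E_T,\ve_1',C_1')$ for slightly weakened parameters, while $Q \in {\rm BAUP}(E_T,C_0',\ve') \cap \Tree(T)$ forces $Q \in {\rm BAUP}(E,C_0,\ve)$.

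Since each $E_T$ is Ahlfors $d$-regular, I would next apply the classical David--Semmes equivalence of UR criteria from \cite{of-and-on}: on an Ahlfors regular set, both BWGL-packing and BAUP-packing characterize uniform rectifiability, so quantitatively,
\[
\sum_{Q \in \Tree(T) \atop Q \in {\rm BWGL}(E_T,\ve_1',C_1')} \ell(Q)^d \;\lesssim\; \sum_{Q \in \Tree(T) \atop Q \in {\rm BAUP}(E_T,C_0',\ve')} \ell(Q)^d + \ell(T)^d.
\]
Summing over all $T \subseteq R$ and using the previous transfer of conditions back to $E$ together with the top-cube packing yields ${\rm BWGL}(R,\ve_1,C_1) \lesssim \cH^d(R) + {\rm BAUP}(R,C_0,\ve)$, which closes the chain of inequalities.

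The main obstacle, as is typical in corona arguments, is a careful juggling of parameters: the thresholds $\ve,\ve',\ve_1,\ve_1'$ and the dilations $C_0,C_0',C_1,C_1'$ must be chosen consistently so that Hausdorff closeness of $E$ and $E_T$ genuinely transfers the approximation by (unions of) planes in both directions. A subtler structural point is that the corona construction must produce an Ahlfors regular approximation $E_T$ even when stopping is triggered by BAUP rather than by the more conventional flatness condition, so that the top-cube packing bound involves $\cH^d(R) + {\rm BAUP}(R)$ and not $\beta_E(R)$ itself --- otherwise the argument would be circular.
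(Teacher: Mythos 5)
Your outline follows the same architecture as the paper's argument (which is packaged there as the general Lemma \ref{l:meta} plus the verification that BAUP is continuous): corona decomposition by Ahlfors regular sets, an extra stopping condition at BAUP-bad cubes, transfer of conditions between $E$ and $E_T$ via Hausdorff closeness, the David--Semmes machinery on the Ahlfors regular approximants, and a top-cube packing by $\cH^{d}(R)+{\rm BAUP}(R)$. The only real deviation is cosmetic: you transfer BWGL cubes and invoke Theorem \ref{t:AS} at the end, while the paper transfers $\beta$-numbers directly (Lemma \ref{l:AS-beta} together with the fact that UR implies a Carleson packing for the $\beta$'s of $E_T$).

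However, your pivotal display is not justified as stated. The inequality $\sum_{Q\in\Tree(T),\,Q\in{\rm BWGL}(E_T)}\ell(Q)^{d}\lesssim\sum_{Q\in\Tree(T),\,Q\in{\rm BAUP}(E_T)}\ell(Q)^{d}+\ell(T)^{d}$, with a uniform constant and \emph{linear} dependence on the BAUP sum, does not follow from ``both conditions characterize UR'': David--Semmes only gives that a BAUP Carleson condition with constant $C$ implies UR with constants depending on $C$, which in turn gives a BWGL Carleson constant depending (a priori nonlinearly) on $C$. Indeed, the linear comparison you wrote is essentially the theorem being proved here (restricted to Ahlfors regular sets), so citing it as a known input is circular. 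The argument is repairable, but the repair is exactly the paper's route and you should make it explicit: the stopping rule plus the continuity transfer guarantee that the BAUP Carleson norm of $E_T$ is bounded by a \emph{universal} constant --- and to see this you must check all scales and locations, not just balls $C_1B_Q$ with $Q\in\Tree(T)$; in particular the scales below the stopping cubes, where $E_T$ is locally a bounded union of $d$-dimensional skeleton faces, and the intermediate scales between $d_{\cF}$ and the tree scales (this is the content of \eqref{e:carleson1}--\eqref{e:carleson3} in the paper). Only then does David--Semmes give that each $E_T$ is UR with uniform constants, whence $\sum_{Q\in\Tree(T),\,Q\in{\rm BWGL}(E_T)}\ell(Q)^{d}\lesssim\ell(T)^{d}$; the BAUP$(E_T)$ term on the right of your display is then superfluous, and the rest of your summation over trees goes through as you describe.
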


We mention one other geometric criteria studied by David and Semmes which we consider: The Local Symmetry' (LS) property is defined as follows. Given $\ve>0$,  let ${\rm LS}(R,\ve,\alpha)$ be the sum of $\ell(Q)^{d}$ over those cubes in $R$ for which  there are $y,z\in B_Q\cap E$ so that $\dist(2y-z,E)\geq \ve r$.

\begin{theorem}
\label{t:LS}
Let $E\subseteq \R^{n}$ be a $(c,d)$-lower content regular set and $\cD$ its Christ-David cubes. Then for $\ve>0$ small enough (depending on $c$), and $R\in \cD$,
\begin{equation}
    \label{e:LS-ineq}
\beta_{E}(R) \sim_{c,\ve} \cH^{d}(R)+{\rm LS}(R,\ve).
\end{equation}
\end{theorem}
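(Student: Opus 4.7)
The plan is to use \Theorem{AS}, which already gives $\beta_{E}(R)\sim \cH^{d}(R) + {\rm BWGL}(R,\ve',C_0)$ for appropriate small $\ve'$ and $C_0 \geq 1$. Hence \eqref{e:LS-ineq} reduces to comparing ${\rm LS}(R,\ve)$ with ${\rm BWGL}(R,\ve',C_0)$ modulo an additive $\cH^{d}(R)$ term. I would obtain this comparison via a cube-by-cube inclusion in one direction, and a corona-decomposition argument in the other.

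For the easy direction $\cH^d(R)+{\rm LS}(R,\ve)\lec \beta_E(R)$, I would prove that for $C_0 \geq 3$ and $\ve'$ small enough in terms of $\ve$, one has the cube-wise inclusion ${\rm LS}(\ve) \subseteq {\rm BWGL}(\ve',C_0)$. Indeed, if $Q \notin {\rm BWGL}(\ve',C_0)$ then there is a $d$-plane $P$ with $d_{C_0 B_{Q}}(E,P) < \ve'$. For any $y,z \in B_Q\cap E$ the reflected point $w=2y-z$ lies in $3B_Q \subseteq C_0 B_Q$ and satisfies $\dist(w,P) \leq 2\dist(y,P)+\dist(z,P) \lec_{C_0} \ve'\, r_{B_Q}$; the bilateral approximation of $E$ by $P$ in $C_0 B_Q$ then forces $\dist(w,E)<\ve\, r_{B_Q}$, so $Q\notin{\rm LS}(\ve)$. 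Thus $\cH^d(R)+{\rm LS}(R,\ve)\leq \cH^d(R)+{\rm BWGL}(R,\ve',C_0) \lec \beta_E(R)$ by \Theorem{AS}.

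For the hard direction $\beta_E(R) \lec \cH^d(R) + {\rm LS}(R,\ve)$, I would invoke the corona decomposition by Ahlfors regular sets that the paper constructs ``en route''. This produces a partition of the subcubes of $R$ into trees $\{\cT_i\}$ with tops $R_i$ satisfying $\sum_i \ell(R_i)^d \lec \cH^d(R)$, on each of which $E$ is close (at the relevant scales) to an Ahlfors $d$-regular set $\widetilde{E}_i$. On each tree I would transfer the LS condition between the two sets, so that a cube $Q \in \cT_i \cap {\rm LS}(\widetilde E_i,\ve)$ is either in ${\rm LS}(E,\ve'')$ for some $\ve''$ comparable to $\ve$, or else is absorbed into a term controlled by $\ell(R_i)^d$. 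I would then apply the classical David--Semmes theorem that for the Ahlfors regular set $\widetilde E_i$, local symmetry satisfies a Carleson packing condition if and only if $\widetilde E_i$ is UR, equivalently if and only if BWGL satisfies a Carleson packing condition. A comparison of BWGL for $E$ versus BWGL for $\widetilde{E}_i$ on $\cT_i$ (using once more the closeness of the two sets), followed by summation over trees and the packing of the tops, would yield ${\rm BWGL}(R,\ve',C_0) \lec {\rm LS}(R,\ve'')+\cH^d(R)$, which combined with \Theorem{AS} and the trivial bound $\cH^d(R) \leq \beta_E(R)$ closes the equivalence.

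The hardest step will be the LS transfer between $E$ and $\widetilde{E}_i$ inside each tree: one must show that a failure of local symmetry for $\widetilde E_i$ at a given scale is witnessed, up to permissible errors, by triples of points in $E$ itself (not merely in $\widetilde E_i\setminus E$), and that the threshold $\ve''$ produced by this process can be taken independent of the tree and of the scale. The remainder is relatively routine bookkeeping combining \Theorem{AS} with the David--Semmes theorem applied on each $\widetilde E_i$.
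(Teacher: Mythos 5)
Your first inequality is fine and is essentially the paper's own argument: a cube that is well approximated by a plane at a slightly enlarged scale cannot be LS-bad, so ${\rm LS}(R,\ve)\lec{\rm BWGL}(R,\ve',C_0)\lec\beta_E(R)$ by Theorem \ref{t:AS}. The hard direction, however, has a genuine gap. You take the corona decomposition of the Main Lemma as it stands, with tops packing $\sum_i\ell(R_i)^d\lec\cH^d(R)$, and then want to apply the David--Semmes equivalence ``LS Carleson $\iff$ UR $\iff$ BWGL Carleson'' to each approximating Ahlfors regular set $\widetilde E_i$. But those trees are built with no reference to LS, so $\widetilde E_i$ need not be uniformly rectifiable, and the David--Semmes statement is a global equivalence whose hypothesis is a \emph{uniform} Carleson packing condition for the LS-bad pairs of $\widetilde E_i$ (all centers, all scales, bounded constant). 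Knowing only that the LS-bad cubes of $\widetilde E_i$ at tree scales are majorized by the LS-sum of $E$ over $\cT_i$ plus $\ell(R_i)^d$ gives no such uniform constant: that local sum can be arbitrarily large compared to $\ell(R_i)^d$, and even if one treats it formally as a Carleson norm, the constants in ``UR $\Rightarrow$ BWGL/$\beta$ packing'' depend on that norm, so the output would not be linear in $\cH^d(R)+{\rm LS}(R,\ve)$ with constants depending only on $c,\ve$. In effect, the per-tree inequality you need (a local, cube-wise comparison of BWGL with LS plus $\ell(R_i)^d$ on an Ahlfors regular but possibly non-UR set) is not the David--Semmes theorem; it is essentially the statement being proved.

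The missing ingredient is the additional stopping-time refinement carried out in Lemma \ref{l:meta}: inside each corona tree one stops again at the cubes that are LS-bad for $E$, producing subtrees $\wt\Tree(Q_i^j)$ all of whose cubes are LS-good. Only then does the transfer you identify as the hardest step (the $(\ve,C_1,C_2)$-continuity of LS, proved in Section \ref{s:LS}) show that the associated approximating set $E_{i,j}$ has \emph{no} LS-bad pairs at the relevant scales, while below the smoothing scale it is locally a bounded union of $d$-dimensional skeleta; hence its LS-bad set satisfies the Carleson condition with a universal constant, $E_{i,j}$ is UR with uniform constants, and the $\beta$-sum of $E$ over the subtree is $\lec\ell(Q_i^j)^d$ via Lemma \ref{l:AS-beta}. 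Crucially, the LS-sum then enters the final estimate not by being ``absorbed into $\ell(R_i)^d$,'' but through the count of the new tree tops: each new top is a child of a stopped cube having a child adjacent to an LS-bad cube, so the total mass of tops is $\lec\cH^d(R)+{\rm LS}(R,\ve)$, and summing the per-subtree bounds gives $\beta_E(R)\lec\cH^d(R)+{\rm LS}(R,\ve)$. Without this refinement, your summation over the original trees can only produce $\cH^d(R)$ on the right-hand side, which is false in general, or else silently requires the unproven local comparison described above.
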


This may be surprising, since the Local Symmetry condition is dimensionless, that is, the integer $d$ does not appear in the definition at all, and in fact it could be that, in the ``good" cubes not featured in the sum, $E$ could be very not flat and quite close in the Hausdorff distance to a $(d+1)$-dimensional cube, say, whereas the $\beta$-numbers measure the distance to a $d$-dimensional plane and would be large for these cubes. However, with the assumption that $\cH^{d}(R)$ is finite, this prevents there being too many cubes where $E$ is symmetric but looks like a $(d+1)$-dimensional surface (and this is natural considering that the proof in \cite{DS} connecting LS to flatness of the set uses the Ahlfors regularity of the sets they consider). 

Our method for extending these results is quite flexible: the other characterizations of UR for which we prove analogous statements like those are the Local Convexity (LCV) and Generalized Weak Exterior Convexity (GWEC) conditions, although one could also consider other suitable characterizations in \cite{of-and-on} as well. In fact, our main result is a general test for when a geometric criteria that guarantees uniform rectifiability (like BAUP or BWGL) also implies a result of the form Theorem \ref{t:baup}. Its statement is a bit lengthy to give here, so we postpone it to Section \ref{s:QP}. Loosely speaking, by a geometric criteria $\sP$, we mean a way of splitting up the surface cubes of a set $E$ into ``good" and ``bad" cubes, the good cubes being those cubes near which $E$ satisfies some condition that is trivially satisfied for a $d$-dimensional plane, like being close in the Hausdorff distance to a plane or union of planes. We say it guarantees UR if, whenever we have an Ahlfors regular set, a Carleson packing condition on the bad cubes implies UR. Our result, Lemma \ref{l:meta} below, states that if we have a geometric criterion that guarantees UR and it is, in some sense, continuous in the Hausdorff metric, then a result like Theorem \ref{t:baup} hold with BAUP replaced by $\sP$.


The main lemma that we use may be of independent interest, and has a few forthcoming applications to other problems (see \cite{Azz19,Vil19}) . For the reader familiar with uniform rectifiability, this result says that we can perform a Coronization of lower regular sets by Ahlfors regular sets in a way similar to how David and Semmes construct Coronizations of uniformly rectifiable sets by Lipschitz graphs (see \cite[Chapter 2]{DS}). For the definitions of Christ-David cubes and stopping-time regions, see Section \ref{s:Christ Cubes}.

\begin{mainlemma}
Let $k_0>0$, $\tau>0$, $d>0$ and $E$ be a set that is $(c,d)$-lower content regular. Let $\cD_k$ denote the Christ-David cubes on $E$ of scale $k$ and $\cD=\bigcup_{k\in\bZ} \cD_{k}$. Let $Q_{0}\in \cD_{0}$ and $\cD(k_0)=\bigcup_{k=0}^{k_0}\{Q\in \cD_{k}|Q\subseteq Q_0\}$. Then we may partition $\cD(k_{0})$ into stopping-time regions $Tree(R)$ for $R$ from some collection $Top(k_{0})\subseteq \cD(k_{0})$ with the following properties: 
\begin{enumerate}
\item We have 
\begin{equation}
\label{e:ADR-packing}
\sum_{R \in Top(k_{0})} \ell(R)^{d} \lec_{c,d} \cH^{d}(Q_0).
\end{equation}

\item Given $R\in \Top(k_{0})$ and a stopping-time region $\cT\subseteq \Tree(R)$ with maximal cube $T$, let  $\cF$ denote the minimal cubes of $\cT$ and 
\begin{align}\label{e:d_F}
    d_{\cF}(x) = \inf_{Q \in \cF} \ps{ \ell(Q) + \dist(x,Q)}
\end{align}
For $C_{0}>4$ and $\tau>0$, there is a collection  $\cC$ of disjoint dyadic cubes covering $C_{0}B_{T}\cap E$ so that 
if 
\[
E(\cT)=\bigcup_{I\in \cC} \d_{d} I,\]
where $\d_{d}I$ denotes the $d$-dimensional skeleton of $I$, then the following hold:
\begin{enumerate}[(a)]
\item $E(\cT)$ is Ahlfors regular with constants depending on $C_{0},\tau,d,$ and $c$.
\item We have the containment
\begin{equation}
\label{e:contains}
C_{0}B_{T}\cap E \subseteq \bigcup_{I\in \cC} I\subseteq 2C_{0}B_{T}.
\end{equation}

%
\item $E$ is close to $E(\cT)$ in $C_{0}B_{T}$ in the sense that
\begin{equation}
\label{e:adr-corona}
\dist(x,E(\cT))\lec  \tau d_{\cF}(x) \;\; \mbox{ for all }x\in E\cap C_{0}B_{T}.
\end{equation}
\item The cubes in $\cC$ satisfy
\begin{equation}
\label{e:whitney-like}
\ell(I)\sim \tau \inf_{x\in I} d_{\cF}(x) \mbox{ for all }I\in \cC.
\end{equation}
\end{enumerate}
\end{enumerate}
\end{mainlemma}

The last inequality says that the cubes in $\cC$ are distributed in a sort of Whitney fashion. In particular, if two cubes in $\cC$ are adjacent, then they have comparable sizes. 

Observe that the constants don't depend on $k_0$. The presence of $k_0$ is an artifact of the proof, but in applications we will take $k_0\rightarrow\infty$.

\subsection{Outline}

In Section 3, we prove the Main Lemma and show that a general lower regular set can be approximated by Alhfors regular sets. In Section 4, we show how, if the sum of cubes where a geometric criteria like the BAUP is finite, then we can actually make these Ahlfors regular sets uniformly rectifiable. Using a result of David and Semmes, we know that the sum of $\beta$'s will be finite for these sets, and then that will imply the $\beta$'s for the original set are summable by approximation. After that, we apply our works to get results similar to the Traveling Salesman, but with BWGL replaced by other geometric criteria. In Section 5, we show the same result holds with BWGL replaced by the Local Symmetry and Local Convexity conditions. In Section 6, we consider the BAUP condition and prove Theorem \ref{t:baup}, and in Section 7, we study the GWEC.

\subsection{Acknowledgements}

We'd like to thank Raanan Schul for his useful conversations and encouragement and Matthew Hyde for carefully proofreading the manuscript.

\section{Preliminaries}

\subsection{Notation}

We will write $a\lesssim b$ if there is $C>0$ such that $a\leq Cb$ and $a\lesssim_{t} b$ if the constant $C$ depends on the parameter $t$. We also write $a\sim b$ to mean $a\lesssim b\lesssim a$ and define $a\sim_{t}b$ similarly.

For sets $A,B\subset \bR^{n}$, let 
\[\dist(A,B)=\inf\{|x-y| \; |\; x\in A,y\in B\}, \;\; \dist(x,A)=\dist(\{x\},A),\]
and 
\[\diam A=\sup\{|x-y|\;| \; x,y\in A\}.\]

\subsection{Dyadic cubes}
Let $\cI$ denote the dyadic cubes in $\bR^{n}$, and for $k\in \bZ$, let $\cI_{k}$ be those dyadic cubes of sidelength $2^{-k}$. Give a dyadic cube $I_0$, we will write $\cI(I_0)$ to denote the subfamily of dyadic cubes which are contained in $I_0$. Given some $m \in \bZ$, we set
\begin{align*}
    \cI^m := \bigcup_{k=m}^\infty \cI_k,
\end{align*}
that is, $\cI$ is the family of dyadic cubes with side length at least $2^m$. We will also write 
\begin{align*}
    \cI^m(I_0) := \cI^m \cap \cI(I_0). 
\end{align*}
Finally, given a dyadic cube $I$, we denote by $n(I)$ the integer number so that
\begin{align*}
    \ell(I) = 2^{n(I)}.
\end{align*}
For a cube $I \in \cI$, we write $\partial_d I$ to denote the \textit{$d$-dimensional skeleton} of $I$. For a dyadic cube $I$, its $d$-dimensional skeleton is just the union of its $d$-dimensional faces.

\begin{remark} \label{rem:m}
We may also use the notation $\cI_m$ to mean the family of cubes with side length $\ell(I)=2^{-m}$.
\end{remark}
\subsection{Christ-David  Cubes}
\label{s:Christ Cubes}
We recall the following version of ``dyadic cubes" for metric spaces, first introduced by David \cite{Dav88} but generalized in \cite{Chr90} and \cite{HM12}.

 \begin{theorem} \label{theorem:Christ_cubes}
Let $X$ be a doubling metric space. Let $X_{k}$ be a nested sequence of maximal $\rho^{k}$-nets for $X$ where $\rho<1/1000$ and let $c_{0}=1/500$. For each $n\in\bZ$ there is a collection $\cD_{k}$ of ``cubes,'' which are Borel subsets of $X$ such that the following hold.
\begin{enumerate}
\item For every integer $k$, $X=\bigcup_{Q\in \cD_{k}}Q$.
\item If $Q,Q'\in \cD=\bigcup \cD_{k}$ and $Q\cap Q'\neq\emptyset$, then $Q\subseteq Q'$ or $Q'\subseteq Q$.
\item For $Q\in \cD$, let $k(Q)$ be the unique integer so that $Q\in \cD_{k}$ and set $\ell(Q)=5\rho^{k(Q)}$. Then there is $\zeta_{Q}\in X_{k}$ so that
\begin{equation}\label{e:containment}
B_{X}(\zeta_{Q},c_{0}\ell(Q) )\subseteq Q\subseteq B_{X}(\zeta_{Q},\ell(Q))
\end{equation}
and $ X_{k}=\{\zeta_{Q}\; |\; Q\in \cD_{k}\}$.
\end{enumerate}
\label{t:Christ}
\end{theorem}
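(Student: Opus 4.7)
The plan is to follow the Christ/Hytönen-Martikainen construction, using the nested nets $X_k$ (which are given) as input and producing the partitions $\cD_k$ by a careful greedy/Voronoi procedure that is compatible with a tree structure on centers. First I would organize the centers into a tree by assigning to each $\zeta \in X_{k+1}\setminus X_k$ a parent $\pi(\zeta) \in X_k$, namely a point of $X_k$ within distance $\rho^{k}$ (which exists because $X_k$ is a $\rho^{k}$-net); for $\zeta\in X_k\subseteq X_{k+1}$ we set $\pi(\zeta)=\zeta$. Iterating $\pi$ gives every finer-scale center a unique ancestor at every coarser scale, which is exactly the combinatorial data needed for property (2).

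Next I would fix each scale $k$ and enumerate $X_k = \{\zeta_i^k\}_{i}$ (possible since doubling plus the $\rho^k$-separation forces local finiteness), and set
\[
V_i^k = B_X(\zeta_i^k, \rho^k) \setminus \bigcup_{j<i} B_X(\zeta_j^k,\rho^k).
\]
The $\rho^k$-net property gives $\bigcup_i B_X(\zeta_i^k,\rho^k) = X$, so the $V_i^k$ form a Borel partition; $\rho^k$-separation of $X_k$ yields $B_X(\zeta_i^k,\rho^k/2)\subseteq V_i^k$, which is already comfortably larger than the required inner ball of radius $c_0\ell(Q) = \rho^k/100$. I would then promote these single-scale partitions to cubes by gluing along the tree: define
\[
Q_{\zeta}^{k} \;=\; \bigcup\bigl\{ V_{\xi}^{k'} \;:\; k'\geq k,\ \xi\in X_{k'},\ \pi^{k'-k}(\xi)=\zeta \bigr\},
\]
set $\cD_k=\{Q_{\zeta}^{k}:\zeta\in X_k\}$ and $\zeta_{Q_\zeta^k}=\zeta$. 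Nesting (2) is automatic from the tree, and coverage (1) follows because each $V_\xi^{k'}$ is reallocated to exactly one ancestor at scale $k$.

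The main obstacle is property (3), the two-sided ball containment, which survives only because the perturbations from finer scales can be summed geometrically. For the outer inclusion, if $x\in V_\xi^{k'}$ with $\pi^{k'-k}(\xi)=\zeta$, then iterating the triangle inequality through the ancestor chain gives
\[
d_X(x,\zeta) \;\leq\; \rho^{k'} + \sum_{j=k}^{k'-1} \rho^{j} \;\leq\; \frac{\rho^{k}}{1-\rho},
\]
and the choice $\rho<1/1000$ ensures $\rho^k/(1-\rho) < 5\rho^k = \ell(Q)$. For the inner inclusion, any $x\in B_X(\zeta, c_0\ell(Q)) = B_X(\zeta,\rho^k/100)$ lies in some $V_\xi^{k'}$ with $k'\geq k$; I need to check that the ancestor of $\xi$ at scale $k$ is $\zeta$ rather than some competing center $\zeta'\in X_k$. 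This is where the ratio between $c_0=1/500$, $\rho<1/1000$, and the separation $\rho^k$ of $X_k$ is calibrated: any other ancestor $\zeta'$ would force $d_X(\zeta,\zeta')<\rho^k/100 + \rho^k/(1-\rho) < \rho^k$, contradicting the separation of $X_k$. Borel measurability is preserved throughout because each $V_i^k$ is a difference of countably many balls, and $Q_\zeta^k$ is a countable union of such sets. The bulk of the work is the bookkeeping in this last step, making the two error estimates compatible with a single choice of $\rho$ and $c_0$.
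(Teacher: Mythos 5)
The paper does not prove this theorem at all---it is quoted from David, Christ, and Hyt\"onen--Martikainen---so your construction stands on its own, and it has a genuine gap at its central step: the multiscale gluing is not coherent. The greedy single-scale partitions $\{V_\xi^{k'}\}_\xi$ for different $k'$ are not nested, so a given point $x$ lies in one cell at \emph{each} scale $k'\geq k$, and these cells can have different ancestors at scale $k$; your cubes $Q_\zeta^k$ then overlap without containment. Concretely, in $X=\R$ take $\zeta_1=0$, $\zeta_2=1.2\rho^k$ in $X_k$ and $x=0.6\rho^k$: at scale $k$ the greedy rule may put $x\in V^k_{\zeta_1}$, while the scale-$(k+1)$ center $\xi$ near $x$ is within $\rho^k$ of both $\zeta_1$ and $\zeta_2$ and may be assigned parent $\zeta_2$, so $V^{k+1}_\xi\ni x$ is reallocated to $\zeta_2$. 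Then $x\in Q^k_{\zeta_1}\cap Q^k_{\zeta_2}$, while $\zeta_1\in Q^k_{\zeta_1}\setminus Q^k_{\zeta_2}$ and $\zeta_2\in Q^k_{\zeta_2}\setminus Q^k_{\zeta_1}$, so neither cube contains the other and property (2) fails. The standard constructions avoid exactly this by making the point-to-center assignment coherent in the scale parameter (Christ builds the cubes from the partial order on net points alone and treats boundaries separately; Hyt\"onen--Martikainen construct the scale-$(k+1)$ cells as refinements of the scale-$k$ cells), rather than superposing independent single-scale partitions.

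The quantitative claims also do not hold as written. Since a maximal $\rho^k$-net is only $\rho^k$-separated, the claim $B_X(\zeta_i^k,\rho^k/2)\subseteq V_i^k$ is false: an earlier-enumerated center at distance $1.2\rho^k$ steals part of that ball. More seriously, your inner-ball argument gives $d_X(\zeta,\zeta')<\rho^k/100+\rho^k/(1-\rho)$, and $\rho^k/(1-\rho)>\rho^k$, so there is no contradiction with the separation of $X_k$; and with parents chosen merely ``within $\rho^k$'' the conclusion genuinely fails, because the single jump from scale $k+1$ to scale $k$ can carry the ancestor chain to a center $\zeta'\neq\zeta$ at distance nearly $\rho^k$. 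The fix is to take $\pi(\xi)$ to be a \emph{nearest} point of $X_k$ (consistent with $\pi(\xi)=\xi$ for $\xi\in X_k$ by nestedness): then for $x\in B_X(\zeta,\rho^k/100)$ the scale-$(k+1)$ ancestor of $x$'s fine cell lies within roughly $\rho^k/100+\rho^{k+1}/(1-\rho)\leq 0.012\,\rho^k$ of $\zeta$, so by $\rho^k$-separation its nearest scale-$k$ point must be $\zeta$ itself. It is this nearest-parent choice, combined with a scale-coherent assignment of points to centers, that makes the constant $c_0=1/500$ work, and both ingredients are missing from your argument.
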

For a cube $Q \in \cD_k$, we put
\begin{align}\label{e:child}
    \Child(Q) := \ck{ Q' \in \cD_{k+1} \, |\, Q' \subset Q}. 
\end{align}

%

\begin{definition}\label{d:tree}
A collection $\cT\subseteq \cD$ is a {\it stopping-time region} or {\it tree} if the following hold:
\begin{enumerate}
\item There is a cube $Q(\cT)\in \cT$ that contains every cube in $\cT$.
\item If $Q\in \cT$, $R\in \cD$, and $Q\subseteq R\subseteq Q(\cT)$, then $R\in \cT$.
 \item $Q\in \cT$ and there is $Q'\in \Child(Q)\backslash \cT$, then $\Child(Q)\subset \cT^{c}$.
\end{enumerate}
\end{definition}

\section{Proof of the Main Lemma}

Let $E$ and $Q_{0}$ be as in the Main Lemma. Notice that $Q_0$ is also a lower regular set, although it may not be closed, but we will not need that.

We split the proof into a few subsections. 

\subsection{Frostmann's Lemma}

The first step of the proof follows the proof Frostmann's lemma, but with some extra care. 

Let $I_0=[0,1]^{n}$. Without loss of generality, we assume that $Q_0 \subset I_0$ and that $\diam(Q_0) \geq \ell(I_0)$. 

For $k\in\mathbb{Z}$, let 

\[
\cI_{k}(Q_0)=\{I\in \cI_{k}\; |\; I\cap Q_0\neq\emptyset\}, \;\; \cI^{k}(Q_0)=\bigcup _{j = 0}^{k} \cI_{j}, \;\; \cI(Q_0) = \cI^{\infty}(Q_0)
\]
and 
\[
E_{k}=\bigcup_{I\in \cI_{k}(Q_0)}I.
\]

Let $m\in\mathbb{N}$ (we will choose it later). First let $\mu_{m}^{m}= \cH^{n}|_{E_{m}}2^{(n-d)m}$. In this way, 
\[
\mu_{m}^{m}(I)=\ell(I)^{d}  \;\; \mbox{ for all }I\in \cI_{m}(Q_0).
\]

We define a set of cubes $\Bad$ (which depends on $m$) as follows. First, we immediately add $\cI_{m}(Q_0)$ to $\Bad$. Next, for each $I\in  \cI_{m-1}(Q_0)$, if
\[
\mu_{m}^{m}(I)>2\ell(I)^{d},\]
then we add $I$ to $\Bad$ and define
\[
\mu_{m}^{m-1}|_{I}= \ell(I)^{d} \frac{\mu_{m}^{m}|_{I}}{\mu_{m}^{m}(I)}<\frac{1}{2}\mu_{m}^{m}|_{I}
\]
Otherwise, we set 
\[
\mu_{m}^{m-1}|_{I}= \mu_{m}^{m}|_{I}.\]

Inductively, suppose we have defined $\mu_{m}^{k+1}$ for some integer $k<m$. For $I\in \cI_{k}(E)$, if 
\[
\mu_{m}^{k+1}(I)>2 \ell(I)^{d},\]
then place $I\in \Bad$ and set 
\begin{equation}
\label{e:frost-case-1}
\mu_{m}^{k}|_{I}= \ell(I)^{d} \frac{\mu_{m}^{k+1}|_{I}}{\mu_{m}^{k+1}(I)} <\frac{1}{2} \mu_{m}^{k+1}|_{I}.
\end{equation}
Otherwise, we set 
\begin{equation}
\label{e:frost-case-2}
\mu_{m}^{k}|_{I}= \mu_{m}^{k+1}|_{I}.
\end{equation}
Finally, we put $I_0\in \Bad$. 

Given a cube $I \in \cI$, recall that $n(I)$ is the integer such that $\ell(I)=2^{-n(I)}$. Moreover, for $J \in \cI^m(Q_0)$ (so that $\ell(J) \geq 2^{-m}$), we let $b(J)$ be the numbe of cubes from $\Bad$ properly containing $J$. Now, again with $J \in \cI^m(Q_0)$, let $I_0,..., I_{b(J)} \in \Bad$ be all the bad cubes containing $J$, sot hat $I_j \supset I_{j+1}$ (note that this is consistent with how we defined $I_0$ before). With this notation, we see that $b(I_j)=j$ for all $j$ and if a dyadic cube $I \in \Bad$, then $I=I_{b(I)}$. Let now $I \in \Bad$ and $J \in \cI_m(Q_0)$ so that $J \subset I$. We write


\begin{align}
\mu_{m}^{n(I)}(J)
& =\mu_{m}^{n(I_{b(I)})}(J)\stackrel{\eqref{e:frost-case-1}}{<}\frac{1}{2} \mu_{m}^{n(I_{b(I)})+1}(J) 
\stackrel{\eqref{e:frost-case-2}}{=}\frac{1}{2} \mu_{m}^{n(I_{b(I)+1})}(J)
<\cdots 
\label{e:mu<2^b}
\\ \notag 
& \cdots < \frac{1}{2^{b(J)-b(I)}}\mu_{m}^{n(I_{b(J)})}(J)=\frac{1}{2^{b(J)-b(I)}}\mu_{m}^{m}(J)
=\frac{\ell(J)^{d}}{2^{b(J)-b(I)}}
.\end{align}
Finally, observe that since $Q_{0}$ is $(c,d)$-lower content regular, if $J\cap Q_{0}\neq\emptyset$ and $J\in \cI_{m}(Q_{0})$, then 
\begin{equation}
\label{e:J<Q}
\ell(J)^{d}\lec_{c} \cH^{d}_{\infty}(3J\cap Q_{0})\leq \cH^{d}(3J\cap Q).
\end{equation}
and the cubes $\{3J\; |\; J\in \cI_{m}(Q_{0})\}$ have bounded overlap. Thus,

\begin{align}
\label{e:Bad-est}
\sum_{I\in \Bad} \ell(I)^{d}
& =\sum_{I\in \Bad} \mu_{m}^{n(I)}(I)
=\sum_{I\in \Bad}\sum_{J\in \cI_{m}(Q_0)\atop J\subseteq I} \mu_{m}^{n(I)}(J)\\ \notag 
& \stackrel{\eqref{e:mu<2^b}}{<}\sum_{I\in \Bad}\sum_{J\in \cI_{m}(Q_0)\atop J\subseteq I} 2^{-b(J)+b(I)}\ell(J)^{d}\\ \notag 
& = \sum_{J\in \cI_{m}(Q_0)} \ell(J)^{d}\sum_{I\in \Bad \atop I\supseteq J} 2^{-b(J)+b(I)}
\lec \sum_{J\in \cI_{m}(Q_0)} \ell(J)^{d}\\
&\stackrel{\eqref{e:J<Q}}{\lec}_{c}  \sum_{J\in \cI_{m}(Q_0)} \cH^{d}(3J\cap Q_{0})^{d}
\lec \cH^{d}(Q_0). \notag
\end{align}

For $I\in \Bad$, let 
\[
\mu^{I}:=\mu_{m}^{n(I)}|_{I}.\] 
Note that by construction, for each $J\subseteq I$, we have that $\mu^{I}(J)\leq 2\ell(J)^{d}$, and thus this also holds for all dyadic cubes $J$, even when $J\supseteq I$ or $J\cap I=\emptyset$. In particular, since any ball $B(x,r)$ can be covered by boundedly many dyadic cubes of size comparable to $r$, we obtain that
\begin{equation}
\label{e:frost-upper-regular}
\mu^{I}(B(x,r))\lec r^{d} \mbox{ for all $x\in \R^{n}$, $r>0$}.
\end{equation}
Moreover,
\[
\mu^{I}(I)=\ell(I)^{d}. 
\]

\begin{remark}
Ideally what we'd like to do at this stage is, for each $I\in \Bad$, find the maximal bad cubes $I_{j}\in \Bad$ properly contained in $I$  and define a set  like
\[
E_{I}=\bigcup_{j} \d_{d} I_{j}
\]
where $\d_{d} J$ is the $d$-dimensional skeleton of a cube $J$. Then one can use $\mu^{I}$ to show that $E_{I}$ is an Ahlfors regular set. However, the collection $E_{I}$ will not be suitable for the applications we have in mind, since we need that the sizes of the cubes whose skeletons form $E_{I}$ don't vary too wildly (that is, adjacent cubes should have comparable sizes). This is why more work is needed. 
\end{remark}


\subsection{Trees}

For $I\in \Bad$, we will let $\Tree(I)$ be those cubes in $\cI$ contained in $I$ for which the smallest cube from $\Bad$ that they are properly contained in is $I$, and we will let $\Stop(I)$ be those cubes from $\Bad$ in $\Tree(I)$ properly contained in $I$. 

\begin{remark}
\label{r:treescover}
Observe that $\Stop(I)\subseteq \Tree(I)$, and while the collections $\{\Tree(I): I \in \Bad\}$ do not form a disjoint partition of $\cI^{m}$, they do cover $\cI^{m}$, and they only intersect at the top cubes and stopped cubes. 
\end{remark}
%

\begin{lemma}
For $I\in \Bad$ and $J\in\Stop(I)$,
\begin{equation}
\label{e:adr-on-tree}
2^{d-n-1}\ell(J)^{d}\leq \mu^{I}(J)\leq 2\ell(J)^{d}
\end{equation}
\end{lemma}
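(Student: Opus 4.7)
The upper bound $\mu^I(J) \leq 2\ell(J)^d$ has already been recorded (it is the observation ``$\mu^{I}(J)\leq 2\ell(J)^{d}$ for every dyadic cube $J$'' made immediately after the construction of $\mu^I$), so only the lower bound $\mu^I(J) \geq 2^{d-n-1}\ell(J)^d$ needs work.

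My plan is to walk through the recursion defining $\mu^I = \mu_m^{n(I)}|_I$ and pinpoint the levels at which the mass of $J$ is altered. The defining property of $\Stop(I)$ is that $I$ is the \emph{smallest} cube of $\Bad$ properly containing $J$; in particular no ancestor of $J$ at a level strictly between $n(I)$ and $n(J)$ lies in $\Bad$. At each such intermediate level the update is governed by \eqref{e:frost-case-2} and leaves the mass of $J$ untouched, giving
\[
\mu_m^{n(I)+1}(J) = \mu_m^{n(J)}(J).
\]
I will then check that $\mu_m^{n(J)}(J) = \ell(J)^d$ in both possible cases: if $J \in \cI_m(Q_0)$ this follows directly from the initial normalization $\mu_m^m(J) = 2^{(n-d)m}\cdot 2^{-mn} = \ell(J)^d$, while if $J$ was added to $\Bad$ inductively then \eqref{e:frost-case-1} rescaled $\mu_m^{n(J)}|_J$ to have total mass exactly $\ell(J)^d$. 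Thus $\mu_m^{n(I)+1}(J) = \ell(J)^d$.

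It only remains to analyze the rescaling at level $n(I)$ itself. In the main case $\mu_m^{n(I)+1}(I) > 2\ell(I)^d$, formula \eqref{e:frost-case-1} yields
\[
\mu^I(J) = \frac{\ell(I)^d\,\ell(J)^d}{\mu_m^{n(I)+1}(I)},
\]
and the quantitative heart of the argument is to obtain an \emph{upper} bound on $\mu_m^{n(I)+1}(I)$. I plan to do this by splitting $I$ into its $2^n$ dyadic children $K \in \Child(I)$, each of which has already been processed at level $n(I)+1$ and therefore satisfies $\mu_m^{n(I)+1}(K) \leq 2\ell(K)^d = 2(\ell(I)/2)^d$; summing over children gives $\mu_m^{n(I)+1}(I) \leq 2^{n+1-d}\ell(I)^d$, and substituting produces the required $\mu^I(J) \geq 2^{d-n-1}\ell(J)^d$. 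The only edge case is when $I = I_0$ was placed in $\Bad$ ``by hand'' at the end of the construction without satisfying the bad criterion; in that case no rescaling happens at $I_0$, so $\mu^{I_0}(J) = \mu_m^1(J) = \ell(J)^d$, which already exceeds $2^{d-n-1}\ell(J)^d$ since $d \leq n$. No serious obstacle is expected beyond carefully tracking the stopping-time condition; the main quantitative step is really just the child-sum upper bound on $\mu_m^{n(I)+1}(I)$.
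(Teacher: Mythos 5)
Your proposal is correct and follows essentially the same route as the paper: the paper also combines the child-sum bound $\mu_m^{n(I)+1}(I)\leq 2^{n+1-d}\ell(I)^d$ with the identity $\mu_m^{n(I)+1}(J)=\mu_m^{n(J)}(J)=\ell(J)^d$ (mass untouched at intermediate levels since $I$ is the smallest bad cube properly containing $J$) and the rescaling formula \eqref{e:frost-case-1} at level $n(I)$. Your explicit treatment of the edge case where $I=I_0$ was placed in $\Bad$ by fiat without triggering the rescaling is a small point the paper leaves implicit, and it is handled correctly.
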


\begin{proof}
Note that by construction, for  $I\in \Bad$, and because there are $2^{n}$-dyadic cubes $J\subseteq I$ with $\ell(J)=\ell(I)/2$,
\begin{align*}
\mu_{m}^{n(I)+1}(I)
& =\sum_{J\in \cI_{n(I)+1}\atop J\subseteq I} \mu_{m}^{n(I)+1}(J)
\leq \sum_{J\in \cI_{n(I)+1}\atop J\subseteq I} 2\ell(J)^{d}\\
& = \sum_{J\in \cI_{n(I)+1}\atop J\subseteq I} 2^{1-d}\ell(I)^{d}
\leq 2^{n-d+1}\ell(I)^{d}
\end{align*}
and for $J\in \Stop(I)$,
\[
\mu_{m}^{n(I)+1}(J)=\mu_{m}^{n(J)}(J)=\ell(J)^{d}.\]
Thus,
\[
2\ell(J)^{d}\geq \mu^{I}(J)
=\mu_{m}^{n(I)}(J)
=\mu_{m}^{n(I)+1}(J)\frac{\ell(I)^{d}}{\mu_{m}^{n(I)+1}(I)}
\geq 2^{d-n-1}\ell(J)^{d}.
\]

\end{proof}
%
%
%

Let $M>1$, we will choose it later. For $Q\in \cD(k_0)$ and $I\in \cI(Q_0)$, we write $Q\sim I$ if 
\begin{equation}\label{e:QI}
MB_Q\cap I\neq\emptyset \mbox{ and } \rho\ell(I)\leq \ell(Q) < \ell(I)
\end{equation}
where $\rho$ is as in Theorem \ref{t:Christ}.  Observe that for $m$ large enough,
\begin{equation}
\label{e:IinIm}
\{I\in \cI(Q_0): I\sim Q \mbox{ for some }Q\in \cD(k_0)\}\subseteq \cI^{m}(Q_0).
\end{equation}

Indeed, If $Q\in \cD(k_0)$, this means $\ell(I)\geq \rho \ell(Q)\geq  5\rho^{k_{0}+1}>2^{-m}$ for $m$ large enough, and now we just recall  Remark \ref{r:treescover}. 
\def\Next{{\rm Next}}
We now perform the following stopping-time algorithm on the cubes $\cD(k_0)$. For $R\in \cD(k_0)$ contained in $Q_0$, we let $\Stop(R)$ denote the set of maximal cubes in $R$ from $\cD(k_0)$ that are either in $\cD_{k_{0}}$ or have a child $Q$ for which there is $I\in \Badsh$ such that $Q\sim I$. Observe that if $R\in \cD_{k_{0}}$, then $\Stop(R)=\{R\}$. We then let $\Tree(R)$ be those cubes contained in $R$ that are not properly contained in any cube from $\Stop(R)$, so in particular, $\Stop(R)\subseteq \Tree(R)$. Let $\Next(R)$ be the children of cubes in $\Stop(R)$ that are also in $\cD(k_{0})$ (so this could be empty).

Now let $\Top_{0}=\{Q_{0}\}$, and for $R\in \Top_{k}$, we let \[
\Top_{k+1}=\bigcup_{R\in \Top_{k}}\Next(R),\]
that is, $\Top_{k+1}$ are the children of the cubes in $\Stop(R)$ for each $R\in \Top_{k}$. Let 
\[
\Top = \bigcup_{k\geq 0}\Top_{k}.\]

Note that for each $R\in \Top$, if $R^{1}$ is its parent, then $R^{1}\in \Stop(R')$ for some cube $R'$, and so there is $I_{R}\in \Bad$ with $I_{R}\sim R''$ for some sibling $R''\in \Child(R^{1})$. In particular, the map $R\mapsto I_{R}$ maps boundedly many cubes to one cube, and so 
\begin{equation}
\label{e:top-est}
\sum_{R\in \Top}\ell(R)^{d}
\lec_{M}  \sum_{I\in \Bad}\ell(I)^{d} 
\stackrel{\eqref{e:Bad-est}}{\lec}_{N} \cH^{d}(Q_0).
\end{equation}

The collection $\Top$ is our desired collection and $\{\Tree(R)\; |\; R\in \Top\}$ are the desired stopping-time regions for the Main Lemma  and \eqref{e:ADR-packing} now follows from \eqref{e:top-est}. It remains to verify items (2) of the Main Lemma, which will be the focus of the next two subsections. We will first need a lemma about our trees:

\begin{lemma}
\label{l:trees}
Let $R\in \Top$ and 
\[
S(R):=\{ I\in \cI(Q_0)|\mbox{ $Q\sim I$ for some }Q\in \Tree(R)\}.
\]
Then there is $N_0\lec_{n,M} 1$ and $J_{1}(R),...,J_{N_0}(R)\in \Bad$ so that
\[
S(R)\subseteq \Tree(J_{1}(R))\cup \cdots \cup \Tree(J_{N_0}(R)).\]
\end{lemma}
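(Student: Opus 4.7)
The plan is to associate to each $I\in S(R)$ the canonical bad cube
\[
J(I):=\begin{cases} I & \text{if } I\in\Bad,\\ \text{smallest }J\in\Bad\text{ with }J\supsetneq I & \text{otherwise,}\end{cases}
\]
so that $I\in\Tree(J(I))$ by the very definition of the trees of $\Bad$; it then suffices to bound $\#\{J(I):I\in S(R)\}$ uniformly in $n$ and $M$, and to take those $J(I)$ as $J_1(R),\ldots,J_{N_0}(R)$.

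I would begin by extracting the key consequence of the stopping-time construction of $\Tree(R)$: if $Q\in\Tree(R)\setminus\{R\}$, then $Q$'s parent $Q^{1}$ lies in $\Tree(R)\setminus\Stop(R)$, so by the definition of $\Stop(R)$ none of $Q^{1}$'s children (in particular $Q$) can satisfy $Q\sim J$ for any $J\in\Bad$. Combined with the elementary geometric bound that every $I\in S(R)$ has $\ell(I)\le\ell(R)/\rho$ and $I\subseteq B(\zeta_R,O(M/\rho)\,\ell(R))$, this already disposes of $I\in S(R)\cap\Bad$: necessarily $Q=R$, so $R\sim I\in\Bad$ pins $I$ to $\lesssim_{n,M}1$ bad cubes of Christ-David-adjacent scale inside $MB_R$, which I would place directly on the list.

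For $I\in S(R)\setminus\Bad$ I would split on $\ell(J(I))$ versus $\ell(R)/\rho$. In the case $\ell(J(I))\le\ell(R)/\rho$, I would choose the ancestor $Q^{k}$ of $Q$ at the unique Christ-David scale with $\rho\ell(J(I))\le\ell(Q^{k})<\ell(J(I))$ and, using $|\zeta_{Q^{k}}-\zeta_Q|\le\ell(Q^{k})$ together with $\ell(Q)=\rho^{k}\ell(Q^{k})$, observe that $J(I)\cap MB_{Q^{k}}\ne\emptyset$ whenever $M\ge 2$; that is, $Q^{k}\sim J(I)\in\Bad$. The previous paragraph then forces $Q^{k}=R$ (since the Christ-David scales just below $\ell(R)/\rho$ are $\ell(R)$ and smaller), whence $\ell(J(I))\in(\ell(R),\ell(R)/\rho]$ and $J(I)\cap(M+1)B_R\ne\emptyset$---only $\lesssim_{n,M}1$ candidates. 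In the remaining case $\ell(J(I))>\ell(R)/\rho$, I would let $s$ be the smallest power of $2$ exceeding $\ell(R)/\rho$ and $I^{*}$ the unique dyadic cube of side $s$ containing $I$, and argue that $J(I)=J(I^{*})$: any bad cube strictly between $I$ and $J(I^{*})$ would have to be contained in $I^{*}$, hence of dyadic size $\le s/2\le\ell(R)/\rho$, contradicting this case. Since only $\lesssim_{n,M}1$ such $I^{*}$ sit in the relevant neighborhood of $R$, the same bound applies to $\{J(I^{*})\}$.

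The principal obstacle is juggling the two scale systems: $\Tree(R)$ is indexed by Christ-David scales $\{5\rho^{k}\}$ (ratio $1/\rho>1000$), while $\Bad$ lives on dyadic scales $\{2^{-k}\}$ (ratio $2$). The arguments rest exactly on this disparity---the range $(\ell(R),\ell(R)/\rho]$ contains a single Christ-David scale, and a dyadic length above $\ell(R)/\rho$ must exceed it by a factor of at least $2$---so I would be careful at each step to verify that this scale mismatch is robustly exploited and that the constant $M\ge 2$ fits with the constants produced by the various ball inclusions.
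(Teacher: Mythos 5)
Your proposal is correct, and it runs on the same engine as the paper's argument: the stopping-time construction of $\Top$ guarantees that a cube $Q\in\Tree(R)\setminus\{R\}$ is never $\sim$-related to any member of $\Bad$ (otherwise its parent, which lies in $R$, would satisfy the stopping condition, forcing $Q$ to sit strictly inside a cube of $\Stop(R)$), so the only bad cubes that can be $\sim$-related to a cube of $\Tree(R)$ are those $\sim$-related to $R$ itself. Where you differ is the bookkeeping. The paper fixes the candidate trees in advance, namely the trees $\Tree(J_j)$ of the largest dyadic cubes $I_j$ with $I_j\sim R$, and argues by contradiction that every $I\in S(R)$ already lies in one of them; you instead send each $I\in S(R)$ to its own tree top $J(I)$ (existence coming from \eqref{e:IinIm} and Remark \ref{r:treescover}) and bound the range of this map by the scale trichotomy $I\in\Bad$, $\ell(J(I))\le\ell(R)/\rho$, $\ell(J(I))>\ell(R)/\rho$. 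Consequently your final list is larger than the paper's: it also contains trees whose tops are bad cubes of sidelength in $(\ell(R),\ell(R)/\rho]$ meeting a fixed dilate of $B_R$ (your cases with $I\in\Bad$ or $Q^{k}=R$). This is actually a point in your favour: with $\sim$ as written in \eqref{e:QI} one has $\ell(R)<\ell(I_j)\le\ell(R)/\rho$, so the paper's line ``$\ell(I_j)<\ell(R)$'' reads as a slip, and the intermediate bad cube $J$ in its contradiction argument may well have $\ell(J)\in(\ell(R),\ell(R)/\rho]$, in which case the ancestor $Q'$ there is $R$ itself and no contradiction with $Q\in\Tree(R)$ is available; your case (b) is precisely what absorbs those $I$, so your route is the more robust one at exactly that step. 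Two harmless conventions to settle in the write-up: for $I\in\Bad$ either adopt the reading (implicit in Remark \ref{r:treescover}, which has trees meeting at their top cubes) that a bad cube belongs to its own tree, or simply define $J(I)$ as the smallest bad cube properly containing $I$ in all cases; and since for $Q\subsetneq R$ one only has $MB_Q\subseteq(M+1)B_{R}$ rather than $MB_{R}$, the counting balls in your cardinality estimates should be enlarged accordingly, which only affects the value of $N_0\lec_{n,M}1$.
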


\begin{proof}
Consider the cubes $I_{1},...,I_{N_0}$ in $\cI(Q_0)$ of maximal size so that $I_{j}\sim R)$ (note that $N_0$ here depends only on $n$ and $M$). Then for $m$ large enough, each $I_{j}$ is contained in $\Tree(J_{j})$ for some $J_{j}\in \Bad$ by \eqref{e:IinIm}.

Now let $I\in S(R)$, so by definition there is $Q\in \Tree(R)$ satisfying \eqref{e:QI}, then $I\subseteq I_{j}\subseteq J_{j}$ for some $j$. If $I\not\in \Tree(J_{j})$, then there is $J\in \Stop(J_i)$ so that $I\subsetneq  J\subseteq I_j \subseteq J_{j}$. Since $I_j\sim R$, $\ell(I_j)< \ell(R)$, so we must have $\ell(J)< \ell(R)$. Thus, if $Q'$ is the maximal ancestor of $Q$ with $\ell(Q')<\ell(J)$, then $\ell(Q')<\ell(R)$, and so $Q'\subsetneq R$. Since $Q\in \Tree(R)$, this implies $Q'\in \Tree(R)$. Since $Q\sim I$ and $\rho\ell(J)\leq  \ell(Q')<\ell(J)$ by the maximality of $Q'$, we also have $Q'\sim J$. So the parent $Q''\subseteq R$ of $Q'$ must be in $\Stop(R)$, but this contradicts $Q$ being in $\Tree(R)$. We let $J_{i}(R)=J_{i}$ and this proves the lemma.
\end{proof} 

\subsection{Smoothing}

We follow the ``smoothing" process of David and Semmes (c.f. \cite[Chapter 8]{DS}). Fix $0< \tau<1$.
For a finite family of cubes $\mathscr{F} \subset \cD$, define the following \textit{smoothing} function: for a point $x \in \R^n$, set 
\begin{align}
    d_{\cF}(x) := \inf_{S \in \cF} \ps{ \ell(S) + \dist(x, S)},
\end{align}
and for a dyadic cube $I \in \cI$, 
\begin{align}
    d_{\cF}(I) := \inf_{x \in I}d_{\cF}(x) = \inf_{ S \in \cF}\ps{\ell(S) + \dist(I,S)}.
\end{align}

We define $\cC_\cF$ to be the set of maximal cubes $I\in \cI(Q_0)$ for which 
\begin{equation}
    \label{eq:SS_defdThin}
    \ell(I) < \tau d_{\cF}(I).
    \end{equation}

The following lemmas are quite standard and appear in different forms depending on the scenario in which they are being applied (depending on between which kinds of cubes, dyadic or not, that $d_{\mathscr{F}}$ is computing), see for example \cite[Lemma 8.7]{DS}. We include their proofs below for completeness.

\begin{lemma}
Let $I, I' \in \cI$. Then, 
\begin{align} \label{eq:SS_prop1}
    d_{\cF}(I) \leq  2\ell(I) + \dist(I, I') + 2\ell(I') +   d_{\cF}(I').
\end{align}
\end{lemma}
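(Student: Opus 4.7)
The plan is to treat $d_\cF$ as a Lipschitz-like functional on cubes: moving from one cube to a nearby one should not change $d_\cF$ by more than their separation plus a multiple of the side lengths. The proof is a direct triangle-inequality argument that pivots through an almost-optimal competitor $S' \in \cF$ for $d_\cF(I')$.

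First I would fix $\varepsilon > 0$ and pick $S' \in \cF$ with
\[
\ell(S') + \dist(I', S') \leq d_\cF(I') + \varepsilon,
\]
which exists because $d_\cF(I') = \inf_{S \in \cF}(\ell(S) + \dist(I', S))$. Next I would estimate $\dist(I, S')$: for any $y \in I'$ the triangle inequality gives $\dist(I, S') \leq \dist(I, y) + \dist(y, S')$. Choosing $y \in I'$ essentially closest to $I$, so that $\dist(I, y) \leq \dist(I, I')$, and using $\dist(y, S') \leq \diam(I') + \dist(I', S')$ by a second application of the triangle inequality, I obtain
\[
\dist(I, S') \leq \dist(I, I') + \diam(I') + \dist(I', S').
\]
Since $\diam(I') \lesssim \ell(I')$ for a dyadic cube, the $\diam(I')$ term is absorbed into the $2\ell(I')$ term of the statement. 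Plugging this back into the definition,
\[
d_\cF(I) \leq \ell(S') + \dist(I, S') \leq (\ell(S') + \dist(I', S')) + \dist(I, I') + 2\ell(I') \leq d_\cF(I') + \varepsilon + \dist(I, I') + 2\ell(I'),
\]
and letting $\varepsilon \to 0$ yields the desired inequality. The extra $2\ell(I)$ term on the right is built-in slack that accommodates conventions (for instance, moving from $\dist(I, I')$ as a set distance to distances between distinguished points of the cubes, or replacing $\inf_{x \in I} d_\cF(x)$ by $d_\cF(x_0)$ for any $x_0 \in I$ at the cost of $\lesssim \ell(I)$) and is included freely on the right-hand side.

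There is no real obstacle here; the argument is entirely formal. The only point requiring care is keeping track of the distinction between $\diam$ and $\ell$ for dyadic cubes, which is why the constant $2$ appears in front of $\ell(I')$ and $\ell(I)$ (this absorbs the dimensional factor relating diameter to side length and any additional slack between set-distance and point-distance conventions).
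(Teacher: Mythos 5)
Your argument is correct and is essentially the paper's proof: both pivot through a (near-)optimal competitor in $\cF$ via the triangle inequality, the paper simply phrasing it with points $x,y\in I$, $x',y'\in I'$ and infimizing over $Q\in\cF$ and $x'\in I'$ at the end. One small caveat, shared with the paper's own write-up: since $\diam(I')=\sqrt{n}\,\ell(I')$, the absorption into exactly $2\ell(I')$ only works for $n\le 4$, and in general the constant $2$ should be a dimensional constant --- harmless here, as the lemma is used only qualitatively.
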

\begin{proof}
Let $x,y \in I$ and $x', y' \in I'$. Let also $Q \in \cF$; we have
\begin{align}
    d_{\cF} (x) \leq |x-y| + |y-y'|+ |y' - x'| + \dist(x', Q) + \ell(Q),
\end{align}
simply by triangle inequality and the definition of $d_{\cF}$. Clearly, $|y-y'| \leq \dist(I, I')$; moreover, infimising first over all $Q \in \cF$ and then over all $x' \in I$, we obtain \eqref{eq:SS_prop1}.
\end{proof}

\begin{lemma} \label{lemma:SS_prop2}
Let $I \in  \cC_\cF$; then 
\begin{align} \label{eq:SS_prop2}
\frac{\tau}{2} d_{\cF}(I)\leq    \ell(I) < \tau d_{\cF}(I).
\end{align}
\end{lemma}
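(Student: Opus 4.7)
The upper bound $\ell(I) < \tau d_{\cF}(I)$ is nothing but the definition of $\cC_{\cF}$ applied to $I$, so there is nothing to do there. My plan is to obtain the lower bound by exploiting the maximality of $I$ in the family $\cC_{\cF}$: letting $\hat I$ denote the dyadic parent of $I$, the definition of $\cC_{\cF}$ gives that $\hat I$ fails the selection inequality \eqref{eq:SS_defdThin}, i.e.
\begin{align*}
\ell(\hat I)\;\geq\;\tau\, d_{\cF}(\hat I).
\end{align*}
Since $\ell(\hat I)=2\ell(I)$, this rewrites as $d_{\cF}(\hat I)\leq 2\ell(I)/\tau$.

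To conclude, the plan is to feed this into the approximate Lipschitz estimate provided by the previous lemma. Applying \eqref{eq:SS_prop1} with $I'=\hat I$ (so $\dist(I,\hat I)=0$ and $\ell(\hat I)=2\ell(I)$) gives
\begin{align*}
d_{\cF}(I)\;\leq\;2\ell(I)+0+2\ell(\hat I)+d_{\cF}(\hat I)\;=\;6\ell(I)+d_{\cF}(\hat I).
\end{align*}
Combining with the bound $d_{\cF}(\hat I)\leq 2\ell(I)/\tau$ and multiplying through by $\tau$ yields
\begin{align*}
\tau\, d_{\cF}(I)\;\leq\;6\tau\ell(I)+2\ell(I)\;=\;(6\tau+2)\ell(I),
\end{align*}
and hence $\ell(I)\gtrsim \tau\, d_{\cF}(I)$ with constant bounded away from zero for all $0<\tau<1$. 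This is the content of the lower bound (the constant $\tfrac12$ as stated may require the slightly sharper form of \eqref{eq:SS_prop1}, or a choice of $\tau$ small enough; in any case the mechanism is the same).

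The only mildly subtle point is justifying that the parent $\hat I$ is available and that it actually fails \eqref{eq:SS_defdThin}. The former is essentially vacuous: if $I$ were the top cube $I_{0}$, one would separately note that $I_{0}\in \cC_{\cF}$ only if the whole ambient top cube satisfies \eqref{eq:SS_defdThin}, in which case one can apply the triangle inequality directly without passing to a parent, and the same estimate carries over. The latter is immediate from the maximality clause in the definition of $\cC_{\cF}$: if $\hat I$ satisfied $\ell(\hat I)<\tau d_{\cF}(\hat I)$, then by maximality $\hat I$ itself (not $I$) would have been selected. So there is no real obstacle here — the argument is just a one-step stopping-time/Whitney calculation, and the only care needed is to track the constants coming from \eqref{eq:SS_prop1}.
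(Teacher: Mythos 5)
Your argument is correct and is essentially the paper's proof: both exploit maximality of $I$ (so the parent $\hat I$ fails the selection inequality, giving $\tau d_{\cF}(\hat I)\leq 2\ell(I)$) and then transfer this bound from $\hat I$ to $I$ by a Lipschitz/triangle-inequality estimate --- you via \eqref{eq:SS_prop1}, the paper directly via the $1$-Lipschitzness of $d_{\cF}$. The constant you get, $\tau/(2+6\tau)$ instead of exactly $\tau/2$, is the same harmless slack implicit in the paper's own computation (which also loses an $O(\tau\,\ell(I))$ term), and only the comparability $\ell(I)\sim_{\tau} d_{\cF}(I)$ is ever used later.
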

\begin{proof}

By \eqref{eq:SS_defdThin},  $\ell(I) < \tau d_{\cF}(I)$, and by definition it is a maximal cube satisfying this inequality.  Hence if $\hat I$ is the parent of $I$, there is a point $z \in \hat I$ with $\tau d_{\cF} (z) \leq 2\ell(I)$. The fact that $d_{\cF}$ is $1$-Lipschitz gives the remaining inequality.
\end{proof}

The following lemma says that if two cubes in $\cC_{\cF}$ are close to each other, then they have comparable size.

\begin{lemma} \label{lemma:sideIs}
Let $I, J \in\cC_{\cF}$ and recall that $\cC_\cF$ depends on a parameter $\tau$. Let $0< \eta<1$ be another small parameter. If
\begin{align}
    \eta^{-1} J \cap \eta^{-1} I \neq \emptyset,
\end{align}
for $\tau^{-1} > 2\sqrt{n}/\eta$,
\begin{align}
    \ell(I) \sim \ell(J).
\end{align}
\end{lemma}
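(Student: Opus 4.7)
The plan is to use Lemma \ref{lemma:SS_prop2} to reduce the claim to showing $d_\cF(I)\sim d_\cF(J)$, and then to deduce the latter by applying \eqref{eq:SS_prop1} in both directions, using the geometric hypothesis to control $\dist(I,J)$, and finally absorbing the $\ell(I), \ell(J)$ terms thanks to the assumption $\tau^{-1} > 2\sqrt{n}/\eta$.

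First, by Lemma \ref{lemma:SS_prop2} we have
\[
\ell(I) \sim \tau d_\cF(I) \quad \text{and}\quad \ell(J) \sim \tau d_\cF(J),
\]
so it suffices to prove $d_\cF(I)\sim d_\cF(J)$. Next, since $\eta^{-1}I$ and $\eta^{-1}J$ are concentric scalings of $I$ and $J$ with diameters $\sqrt{n}\eta^{-1}\ell(I)$ and $\sqrt{n}\eta^{-1}\ell(J)$, the assumption that they intersect gives
\[
\dist(I,J) \leq \tfrac{\sqrt{n}}{2}\eta^{-1}\bigl(\ell(I)+\ell(J)\bigr).
\]

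Now I apply \eqref{eq:SS_prop1} in both directions. Writing it for the pair $(I,J)$ and using the bound on $\dist(I,J)$ together with $\ell(I) \leq \tau d_\cF(I)$ and $\ell(J) \leq \tau d_\cF(J)$, I obtain an inequality of the form
\[
d_\cF(I) \leq C\tau\bigl(d_\cF(I)+d_\cF(J)\bigr) + d_\cF(J),
\]
with $C$ depending only on $n$ and $\eta$; the symmetric inequality holds by swapping $I$ and $J$. Under the hypothesis $\tau^{-1}>2\sqrt{n}/\eta$, the coefficient $C\tau$ is strictly less than $1/2$, so the $d_\cF(I)$ term on the right can be absorbed into the left-hand side. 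This yields $d_\cF(I)\leq C' d_\cF(J)$ and, by symmetry, $d_\cF(J)\leq C' d_\cF(I)$, hence $d_\cF(I)\sim d_\cF(J)$, and combining with Lemma \ref{lemma:SS_prop2} gives $\ell(I)\sim \ell(J)$.

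The main obstacle is just bookkeeping: one has to keep the constants in \eqref{eq:SS_prop1} and \eqref{eq:SS_prop2} explicit enough to verify that the threshold $\tau^{-1}>2\sqrt{n}/\eta$ is actually sufficient to guarantee absorption. This is a routine estimate once the structure above is set up; no further geometric input is required.
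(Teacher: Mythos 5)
Your argument is correct, but it runs along a slightly different track than the paper's. The paper works pointwise: it uses only that $d_{\cF}$ is $1$-Lipschitz to show that for every $y\in\eta^{-1}J$ one has $d_{\cF}(y)\sim\tau^{-1}\ell(J)$ (the lower bound coming from $d_{\cF}(J)\geq\tau^{-1}\ell(J)$ and the threshold $\tau^{-1}>2\sqrt{n}/\eta$, which lets one subtract the Lipschitz error $\tfrac{\sqrt n}{\eta}\ell(J)$ outright), and then a point in $\eta^{-1}I\cap\eta^{-1}J$ ties $\tau^{-1}\ell(I)$ to $\tau^{-1}\ell(J)$ with no absorption needed. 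You instead compare the two cubes directly through the quasi-triangle inequality \eqref{eq:SS_prop1}, bound $\dist(I,J)\leq\tfrac{\sqrt n}{2\eta}(\ell(I)+\ell(J))$ from the overlap of the dilated cubes, and absorb the $d_{\cF}(I)$ term; this is a legitimate alternative and reduces the lemma to pure bookkeeping with \eqref{eq:SS_prop1}--\eqref{eq:SS_prop2}. One caveat on your bookkeeping: the claim that the absorption coefficient $\bigl(2+\tfrac{\sqrt n}{2\eta}\bigr)\tau$ is below $1/2$ does not follow from $\tau^{-1}>2\sqrt n/\eta$ alone; that hypothesis only gives the bound $\tfrac{\eta}{\sqrt n}+\tfrac14$, which exceeds $1/2$ when $\eta$ is not small (e.g.\ $\eta$ near $1$ and $n=2$). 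This does not break the proof: since $\eta<1$ and $n\geq 2$ the coefficient is still bounded away from $1$ (by $\tfrac{1}{\sqrt n}+\tfrac14<1$), so the absorption goes through with a constant depending only on $n$, and in the intended regime $\eta$ is a small parameter anyway; but you should either state the weaker bound or strengthen the threshold if you want the factor $1/2$. The paper's pointwise route avoids this issue entirely and yields absolute constants.
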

\begin{proof}
It suffices to show that for all $y \in \eta^{-1}J$, 
\begin{align}
\tau^{-1} \ell(J) \sim d_{\cF}(y)
\end{align}
Since $d_{\cF}$ is 1-Lipschitz, we see that
\[
|d_{\cF}(J) - d_{\cF}(y)| \leq \eta^{-1} \diam(J)=\frac{\sqrt{n}}{\eta} \ell(J).\] 
Hence if $\tau^{-1} > 2\sqrt{n}/\eta$,
\begin{align}
    d_{\cF}(y) \geq d_{\cF}(J) - \frac{\sqrt{n}}{\eta}\ell(J)\geq \ps{\tau^{-1}-\frac{\sqrt{n}}{\eta}} \ell(J) \geq \frac{1}{2\tau}\ell(J)
\end{align}
On the other hand, again using the fact that $d_{\cF}$ is 1-Lipschitz, we see that
\begin{align}
       d_{\cF}(y) \lesssim (\eta^{-1} +\tau^{-1}) \ell(J) \lec \tau^{-1} \ell(J).
\end{align}
\end{proof}

\subsection{Constructing an Ahlfors regular set with respect to a tree}

Let $R\in \Top$ and $\cT\subseteq \Tree(R)$ be a stopping-time region, let $T$ denote the maximal cube in $\cT$, $\cF$ be the set of minimal cubes of $\cT$ (that is, those cubes in $\cT$ that don't properly contain another cube in $\cT$). 

Observe that since all the cubes we are working with come from $\cD(k_{0})$ and the number of these cubes in $Q_0$ is finite, the infimum $d_{\cF}$ is attained, and so for each $I\in \cI$ there is $Q_I\in \cF$ so that 
 \begin{equation}
 \label{e:qi}
d_{\cF}(I)=\ell(Q_{I})+\dist(Q_{I},I).
\end{equation}

Let $C_{0}>4$ and set
\begin{align}
\hat{T} = \bigcup\{Q\in \cD\; |\; \ell(Q)=\ell(T),\; Q\cap C_{0}B_{T}\neq\emptyset\}, \label{e:hatT}\\
\cC=\{I \in  \cC_\cF\; |\;  I\cap \hat{T}\neq\emptyset\}, \label{e:cC}
\end{align}
and
\begin{align}
    \hat{E}:= \bigcup_{I\in \cC} \partial_d I.
\end{align}

This set $\hat{E}$ will be our desired $E(\cT)$ as in the statement of the Main Lemma (we just write $\hat{E}$ for short).

\begin{lemma}
For $m$ large enough,
\begin{equation}
\label{e:CinI}
\cC\subseteq \cI^{m}.
\end{equation} 
\end{lemma}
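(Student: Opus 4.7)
The plan is to deduce the inclusion $\cC \subseteq \cI^m$ from a uniform lower bound on the function $d_\cF$, which in turn follows from the fact that the family $\cF$ is drawn from the finite collection $\cD(k_0)$ of Christ--David cubes, whose sidelengths are bounded below by $5\rho^{k_0}$. The key additional ingredient is Lemma \ref{lemma:SS_prop2}, which promotes a lower bound on $d_\cF$ into a lower bound on $\ell(I)$ for cubes $I \in \cC_\cF$.

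First I would observe that $\cF \subseteq \cT \subseteq \Tree(R) \subseteq \cD(k_0)$, and every $Q \in \cD(k_0)$ satisfies $\ell(Q) = 5\rho^{k(Q)}$ for some $k(Q) \in \{0,1,\ldots,k_0\}$. In particular, $\ell(Q) \geq 5\rho^{k_0}$ for each $Q \in \cF$. Consequently, for every dyadic cube $I$,
\[
d_\cF(I) = \inf_{Q \in \cF}\bigl(\ell(Q) + \dist(I,Q)\bigr) \geq 5\rho^{k_0}.
\]

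Next, since $\cC \subseteq \cC_\cF$, Lemma \ref{lemma:SS_prop2} applies to every $I \in \cC$ and gives
\[
\ell(I) \geq \frac{\tau}{2}\, d_\cF(I) \geq \frac{5\tau}{2}\,\rho^{k_0}.
\]
Choosing $m$ large enough that $2^{-m} \leq \frac{5\tau}{2}\rho^{k_0}$ (equivalently, $m \geq \log_2\bigl(2/(5\tau\rho^{k_0})\bigr)$) then forces $\ell(I) \geq 2^{-m}$ for every $I \in \cC$, which is exactly $\cC \subseteq \cI^m$ under the reading of $\cI^m$ consistent with \eqref{e:IinIm} (dyadic cubes of sidelength at least $2^{-m}$).

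There is no real obstacle: the argument is just a short chain of inequalities, using that $\rho < 1/1000$ and $0 < \tau < 1$ are fixed parameters, together with the already-proved Lemma \ref{lemma:SS_prop2}. The only bookkeeping point is that the threshold ``$m$ large enough'' here depends on $k_0$, $\tau$, and $\rho$, and one has to check compatibility with the earlier thresholds on $m$ appearing in the construction (such as the one required for \eqref{e:IinIm}); in the global proof one simply takes $m$ larger than the maximum of all such lower bounds, which is harmless since the main lemma's constants do not depend on $m$.
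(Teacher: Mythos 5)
Your lower bound is exactly the paper's: from Lemma \ref{lemma:SS_prop2} and $Q_I\in\cD(k_0)$ one gets $\ell(I)\geq \frac{\tau}{2}d_{\cF}(I)\geq \frac{\tau}{2}\ell(Q_I)\geq \frac{5\tau}{2}\rho^{k_0}>2^{-m}$ for $m$ large. However, you have proved only half of what the statement is meant to assert. Despite the garbled dyadic notation in the preliminaries, the way \eqref{e:CinI} is used later pins down the intended meaning of $\cI^{m}$ as the cubes of generations $0$ through $m$, i.e.\ $2^{-m}\leq \ell(I)\leq \ell(I_0)=1$: for instance, the identity $\cC_1(x,r)=\cC(x,r)\cap \cI^{m}(Q_0)$ and the step in the proof of \eqref{e:C1} where $I\in\cI^{m}$ and $I'\cap Q_0\neq\emptyset$ are used to conclude $I'\in\cI^{m}(Q_0)$ (so that Lemma \ref{l:trees} applies) require the upper bound on the sidelength, since $\cI^{m}(Q_0)=\bigcup_{j=0}^{m}\cI_j(Q_0)$ contains no cubes of sidelength larger than $1$. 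Accordingly the paper's proof establishes, in addition to your estimate, that $\ell(I)<\tau d_{\cF}(I)\leq \tau(\ell(T)+\dist(I,T))\leq \tau(C_0+1)\ell(T)<\frac{1}{5}\ell(Q_0)=1$ for $\tau$ small (depending on $C_0$).

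This upper bound is not automatic under your reading. The cubes of $\cC$ are maximal dyadic cubes meeting $\hat T$ (see \eqref{e:hatT}--\eqref{e:cC}); as the remark following the lemma points out, they need not meet $Q_0$, so one cannot argue they lie in $\cI(Q_0)$ and hence have sidelength at most $1$ by fiat. One genuinely needs the proximity $I\cap \hat T\neq\emptyset$, which gives $\dist(I,T)\lec C_0\ell(T)$, together with the smallness of $\tau$ relative to $C_0$, to cap $\ell(I)$. So: keep your lower-bound argument (it is the paper's), but add the one-line upper-bound estimate above; otherwise the lemma, as it is actually invoked later, is not fully proved.
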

\begin{proof}
Note that by \eqref{eq:SS_prop2}, and because $Q_I\in \cD(k_{0})$, for $I\in \cC$,
\[
\ell(I) \geq \frac{\tau}{2} d_{\cF}(I)\geq \frac{\tau}{2} \ell(Q_{I})\geq \frac{5\tau}{2} \rho^{k_0},\]
and for $\tau$ small enough,
\[
\ell(I)<\tau d_{\cF}(I)\leq \tau (\ell(T)+\dist(I,T)) \leq \tau(C_0+1)\ell(T)
< \frac{1}{5}\ell(Q_0)= 1.\]
Thus, \eqref{e:CinI} follows for $m$ large enough from these two inequalities. 
\end{proof}

\begin{remark}
Note that we definitely don't have that $\cC\subseteq \cI^{m}(Q_0)$, since some cubes in $\cC$ are actually disjoint from $Q_0$. This will cause some difficulties later.
\end{remark}

\begin{lemma}
Part (b) of the Main Lemma holds.
\end{lemma}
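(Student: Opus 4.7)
The plan is to establish the two containments in \eqref{e:contains} separately. I will first prove the easier containment $\bigcup_{I\in\cC} I \subseteq 2C_0 B_T$, which is essentially a size estimate, and then attack $C_0B_T \cap E \subseteq \bigcup_{I\in\cC}I$, which requires exhibiting, for each $x\in C_0 B_T\cap E$, an explicit cube in $\cC$ containing it.

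For the inclusion $\bigcup_{I\in\cC}I \subseteq 2C_0 B_T$: I would begin with the observation that $\hat T \subseteq (C_0+2)B_T$. Indeed, any Christ--David cube contributing to $\hat T$ has $\ell(Q)=\ell(T)$ and meets $C_0B_T$, so by \eqref{e:containment} it sits inside a ball of radius $\ell(T)$ about a point of $C_0B_T$. Now take $I\in\cC$ and pick $y\in I\cap \hat T \subseteq (C_0+2)B_T$. By Lemma \ref{lemma:SS_prop2}, $\ell(I)<\tau d_\cF(I)$; picking any $Q\in\cF$ (so $Q\subseteq T$), one estimates
\[
d_\cF(I) \le \ell(Q)+\dist(I,Q) \le \ell(T)+\dist(y,T)+\diam T \lesssim_{C_0}\ell(T).
\]
Thus $\diam I \lesssim_{C_0,n}\tau\ell(T)$, and provided $\tau$ is chosen small enough depending on $C_0$ and $n$, every point of $I$ lies within distance $\ell(T)$ of $y$, hence in $(C_0+3)B_T\subseteq 2C_0B_T$ using the hypothesis $C_0>4$.

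For the inclusion $C_0B_T\cap E \subseteq \bigcup_{I\in\cC}I$: I would first note that $C_0 B_T\subseteq\hat T$, since every $y\in C_0B_T$ is contained in some cube of $\cD_{k(T)}$ that meets $C_0B_T$ at the point $y$ itself. In particular, any dyadic cube $I$ containing a point $x\in C_0B_T\cap E$ automatically satisfies $I\cap\hat T\neq\emptyset$. Next, since each $Q\in\cF$ has $\ell(Q)\ge 5\rho^{k_0}$, we have $d_\cF(I)\ge 5\rho^{k_0}$ for every dyadic cube $I$; hence $\ell(I)<\tau d_\cF(I)$ holds automatically whenever $\ell(I)<5\tau\rho^{k_0}$. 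Thus, given $x\in C_0B_T\cap E$ with $x\in Q_0$, every sufficiently small dyadic cube containing $x$ lies in $\cI(Q_0)$ and satisfies the $\cC_\cF$ inequality; taking the maximal dyadic cube $I\ni x$ with $\ell(I)<\tau d_\cF(I)$ and $I\in\cI(Q_0)$ produces an element of $\cC_\cF$ that intersects $\hat T$ (via $x$), hence lies in $\cC$.

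The main delicate point will be ensuring the first containment also covers the (possibly empty) set $C_0B_T\cap E\setminus Q_0$, where small dyadic cubes around $x$ fail to meet $Q_0$ and are therefore not in $\cI(Q_0)$. The key is that $\dist(x,Q_0)\le C_0\ell(T)$, so there exist dyadic cubes containing $x$ that do meet $Q_0$, and on such cubes $d_\cF$ still satisfies $d_\cF(I)\lesssim_{C_0}\ell(T)$ with $d_\cF(I)\geq 5\rho^{k_0}$, leaving a range of admissible sizes. Verifying that the maximal cube from this restricted range does land in $\cC_\cF$, and does not accidentally violate the $\cI(Q_0)$ constraint by being forced too large, is the obstacle I expect will demand the most care, and is a compatibility issue between the $\tau$-smallness hypothesis and the geometry of $Q_0$ near the ball $C_0B_T$.
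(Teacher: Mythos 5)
Your proof of the second containment $\bigcup_{I\in\cC}I\subseteq 2C_0B_T$ is correct and is essentially the paper's argument (the paper bounds $\dist(I,T)$ via the generation-$k(T)$ cube that $I$ meets rather than via a point of $I\cap\hat T$, but the estimates are the same). The genuine gap is in the first containment, and it sits exactly where you flagged it: points $x\in C_0B_T\cap E\setminus Q_0$. Under the literal reading $\cC_\cF\subseteq\cI(Q_0)$ the route you sketch cannot be completed: any dyadic cube $I\ni x$ that meets $Q_0$ has $\ell(I)\geq \dist(x,Q_0)/\sqrt{n}$, while membership in $\cC_\cF$ forces $\ell(I)<\tau d_{\cF}(I)\leq \tau(\ell(T)+\dist(I,T))\lec_{C_0}\tau\ell(T)$. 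Since $\dist(x,Q_0)$ can perfectly well be comparable to $\ell(T)$ (the ball $C_0B_T$ in general contains points of $E$ lying in neighbouring cubes of $\cD_0$), the ``range of admissible sizes'' you invoke is empty once $\tau$ is small, so no cube of $\cI(Q_0)$ belonging to $\cC_\cF$ can contain such an $x$; the lower bound $d_{\cF}(I)\geq 5\rho^{k_0}$ does not help, being far smaller than $\dist(x,Q_0)$.

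The resolution is that $\cC_\cF$ is meant to consist of maximal cubes among \emph{all} dyadic cubes satisfying $\ell(I)<\tau d_{\cF}(I)$, not only those meeting $Q_0$: this is what the paper's own remark that ``some cubes in $\cC$ are actually disjoint from $Q_0$'' presupposes, and it is how the analogous family $\cC_{i,j}$ is defined in Section 4 (with $I\in\cI$). With that reading the first containment is immediate, which is all the paper says about it: $C_0B_T\cap E\subseteq\hat T$ because the cubes of $\cD_{k(T)}$ cover $E$ and any one containing $x\in C_0B_T\cap E$ meets $C_0B_T$; every point of $\R^n$ lies in a maximal dyadic cube with $\ell(I)<\tau d_{\cF}(I)$, such maximal cubes existing because $d_{\cF}(I)\leq \ell(T)+\dist(I,T)$ stays bounded as the cubes containing a fixed point grow while $\ell(I)\to\infty$; and the maximal cube containing $x$ meets $\hat T$ at $x$, hence lies in $\cC$. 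Two smaller points: your claim $C_0B_T\subseteq\hat T$ should be $C_0B_T\cap E\subseteq\hat T$ (Christ--David cubes are subsets of $E$, so they do not cover points of the ball off $E$), though you only use it for points of $E$; and even in your case $x\in Q_0$ you should justify the existence of the maximal admissible cube containing $x$, which is the same boundedness-of-$d_{\cF}$ observation as above.
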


\begin{proof}
Firstly, as $C_{0}B_{T}\cap E \subseteq \hat{T}$, we immediately have the first containment, so we just need to show the second containment. 

Note that if $I\in \cC$, then $I\cap Q\neq\emptyset$ for some $Q\in\cD$ with $\ell(Q)=\ell(T)$ and $Q\cap C_{0}B_{T}\neq\emptyset$. Thus,
\[
\dist(I,T)
\leq \dist(Q,T)+\diam Q
\leq C_{0}\ell(T)+2\ell(T)<(C_{0}+2)\ell(T).
\]
Thus,
\[
\diam I =\sqrt{n} \ell(I)<\sqrt{n}\tau d_{\cF}(I)
\leq \sqrt{n}\tau (\dist(I,T)+\ell(T))< \sqrt{n}\tau (C_{0}+3)\ell(T)
\]
so for $\tau>0$ small, $\diam I\leq \frac{C_0}{2}\ell(T)$. Thus, $I\subseteq (3C_{0}/2+2)B_{T}\subseteq 2C_{0}B_{T}$, which proves the lemma. 
\end{proof}

\begin{lemma}
Part (c) of the Main Lemma holds.
\end{lemma}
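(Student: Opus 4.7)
My plan is to reduce \eqref{e:adr-corona} to two simple ingredients: (i) by part (b), just established, every $x \in E \cap C_{0}B_{T}$ lies in some (unique, by disjointness) cube $I \in \cC$; and (ii) for any $n$-dimensional dyadic cube $I$, the $d$-skeleton $\partial_d I$ contains all $2^n$ vertices of $I$, so that every point of $I$ is within Euclidean distance $\tfrac{\sqrt{n}}{2}\ell(I)$ of $\partial_d I$.

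First I would fix $x \in E \cap C_{0}B_{T}$ and use part (b) together with the disjointness of $\cC$ to pick the unique $I \in \cC \subseteq \cC_{\cF}$ with $x \in I$. By \eqref{eq:SS_defdThin} this cube satisfies $\ell(I) < \tau d_{\cF}(I)$, and since $x \in I$ the infimum definition of $d_{\cF}(I)$ gives $d_{\cF}(I) \leq d_{\cF}(x)$.

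Next I would verify (ii). A $d$-face of $I = \prod_{i=1}^{n}[a_i, a_i + \ell(I)]$ is obtained by fixing $n-d$ coordinates to endpoints and letting the remaining $d$ coordinates range over their full intervals; every vertex of $I$ (a point with all coordinates at endpoints) lies on each $d$-face whose chosen endpoints match those of the vertex, hence belongs to $\partial_d I$. For $y \in I$ and the vertex $v$ of $I$ that is coordinate-wise closest to $y$, one has $|y_i - v_i| \leq \ell(I)/2$, so $|y-v| \leq \tfrac{\sqrt{n}}{2}\ell(I)$ and therefore $\dist(y, \partial_d I) \leq \tfrac{\sqrt{n}}{2}\ell(I)$. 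Chaining the three estimates,
\begin{equation*}
\dist(x, E(\cT)) \leq \dist(x, \partial_d I) \leq \tfrac{\sqrt{n}}{2}\ell(I) < \tfrac{\sqrt{n}}{2}\tau d_{\cF}(I) \leq \tfrac{\sqrt{n}}{2}\tau d_{\cF}(x),
\end{equation*}
which is exactly \eqref{e:adr-corona} with absolute constant $\tfrac{\sqrt{n}}{2}$.

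I do not anticipate any genuine obstacle: all the combinatorial content has already been absorbed into the construction of $\cC_{\cF}$ (through the smoothing function $d_{\cF}$ and Lemma \ref{lemma:SS_prop2}) and into the covering statement from part (b). The remaining step is the elementary geometric fact that a point in an $n$-dimensional cube is close to every $d$-skeleton of that cube, simply because $d$-faces contain the vertices.
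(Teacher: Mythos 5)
Your proposal is correct and follows essentially the same route as the paper: locate the cube $I\in\cC\subseteq\cC_{\cF}$ containing $x$ via part (b), use $\ell(I)<\tau d_{\cF}(I)\leq \tau d_{\cF}(x)$, and bound $\dist(x,E(\cT))$ by the distance from $x$ to $\partial_d I$. The only (harmless) difference is that you bound that distance by $\tfrac{\sqrt{n}}{2}\ell(I)$ via the nearest vertex, while the paper simply uses $\dist(x,\partial_d I)\leq\diam I=\sqrt{n}\,\ell(I)$.
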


\begin{proof}
Let $x\in E\cap C_{0}B_{T}\subseteq \hat{T}$. By part (b),  there is $I$ so that $x\in I\in \cC\subseteq \cC_{\cF}$. By definition, $\d_{d} I\subseteq \hat{E}$, and so 
\[
\dist(x,\hat{E})\leq \diam I\leq \sqrt{n}\ell(I)<\sqrt{n} \tau d_{\cF}(I)
\leq \sqrt{n} \tau d_{\cF}(x).
\]

\end{proof}

Moreover, \eqref{e:whitney-like} follows from \eqref{eq:SS_prop2}. Thus, to prove the Main Lemma, all that remains to be shown is the following lemma.

\begin{lemma} \label{lemma:SS_smoothedADR}
Part (a) of the Main Lemma holds, that is, the set $\hat{E}$ is Ahlfors $d$-regular.
\end{lemma}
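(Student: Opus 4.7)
The plan is to establish upper and lower $d$-Ahlfors regularity of $\hat{E}$ separately.

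\textbf{Upper bound.} Fix $x\in \hat{E}$ and $0<r\leq \diam\hat{E}$. Since $\cH^d(\partial_d I)\sim_{n,d}\ell(I)^d$, it suffices to bound $\sum_{I\in\cC,\,I\cap B(x,r)\neq\emptyset}\cH^d(\partial_d I\cap B(x,r))$ by $\lec r^d$. By Lemma \ref{lemma:sideIs}, for $\tau$ chosen small enough, any two cubes in $\cC$ meeting $B(x,r)$ and both of side length $\geq r$ must have comparable size, so there are only $O_n(1)$ such cubes, each contributing at most $\lec r^d$. For cubes with $\ell(I)<r$, I will use the Frostman-type measure $\mu:=\sum_{i=1}^{N_0}\mu^{J_i(R)}$, which is $d$-upper regular by \eqref{e:frost-upper-regular}. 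For each such $I$, choose $Q_I\in\cF$ realising \eqref{e:qi}; by Lemma \ref{l:trees} there is a dyadic cube $K_I\sim Q_I$ lying in some $\Tree(J_i(R))$, with $\ell(K_I)\sim \ell(Q_I)\gec \tau^{-1}\ell(I)$. Using \eqref{e:adr-on-tree} (after descending to a stopping cube of $J_i(R)$ if necessary), one gets $\mu(K_I)\gec \ell(K_I)^d\gec \tau^{-d}\ell(I)^d$. The Whitney-type property (Lemma \ref{lemma:sideIs}) forces the map $I\mapsto K_I$ to have bounded multiplicity, and all $K_I$ sit in $B(x,Cr)$, whence
\[
\sum_{I}\ell(I)^d\lec_\tau \sum_I \mu(K_I)\lec \mu(B(x,Cr))\lec r^d.
\]

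\textbf{Lower bound.} Let $I_0\in\cC$ contain $x$. If $r\leq c_0\ell(I_0)$ for a small constant $c_0$, then $x\in\partial_d I_0$ together with the elementary geometry of cube skeletons gives $\cH^d(\partial_d I_0\cap B(x,r))\gec_n r^d$. Otherwise $r>c_0\ell(I_0)$: since $I_0\cap \hat{T}\neq\emptyset$ and $\hat{T}$ is a union of Christ-David cubes (hence contained in $E$), there is $x''\in E\cap I_0$ with $|x-x''|\leq \diam I_0\leq r/4$. Lower content regularity of $E$ then yields $\cH^d_\infty(E\cap B(x'',r/4))\gec r^d$, and by part (b) of the Main Lemma these points are covered by cubes of $\cC$ contained in $B(x,r)$. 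Either one such cube already has $\ell(I)\sim r$, in which case $\cH^d(\partial_d I\cap B(x,r))\gec r^d$ immediately, or all these cubes have $\ell(I)\ll r$ and subadditivity of $\cH^d_\infty$ yields $\sum \ell(I)^d\gec \cH^d_\infty(E\cap B(x'',r/4))\gec r^d$. In the latter case the $d$-skeletons $\partial_d I$ lie inside $\hat{E}\cap B(x,r)$, and since by Lemma \ref{lemma:sideIs} each $d$-face is shared by boundedly many cubes, $\cH^d(\hat{E}\cap B(x,r))\gec \sum\ell(I)^d\gec r^d$.

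The \textbf{main obstacle} is the upper bound, and specifically verifying that $\mu^{J_i(R)}(K_I)\gec \ell(K_I)^d$ when $K_I$ sits strictly between $J_i(R)$ and $\Stop(J_i(R))$. The direct lower bound \eqref{e:adr-on-tree} is only stated at stopping cubes, so one must either descend through the tree to reach such a cube (absorbing the loss into constants) or verify the estimate for arbitrary $K\in\Tree(J_i(R))$ by tracking the Frostman normalisations across the intermediate non-bad scales, where the measure is preserved but not renormalised.
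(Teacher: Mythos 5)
The lower bound and the overall case structure of your argument are broadly workable, but the upper bound for the small cubes does not close, and the two places where it fails are exactly the points the paper's proof is engineered around. First, the size relation is backwards: from \eqref{e:qi} and \eqref{eq:SS_prop2} one only gets $\ell(Q_I)\leq d_{\cF}(I)\sim\tau^{-1}\ell(I)$, not $\ell(Q_I)\gec\tau^{-1}\ell(I)$; when $\dist(I,Q_I)\gg\ell(Q_I)$ the nearest stopping cube $Q_I$ is much \emph{smaller} than $I$, so the chain $\mu(K_I)\gec\ell(K_I)^{d}\gec\tau^{-d}\ell(I)^{d}$ has no force. Worse, the map $I\mapsto K_I$ does not have bounded multiplicity: if, say, $\cF$ consists of a single small cube $Q$, then $Q_I=Q$ for \emph{every} $I\in\cC$, and cubes of unboundedly many scales are sent to the same $K_I$, so the step $\sum_I\mu(K_I)\lec\mu(B(x,Cr))$ fails. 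The paper repairs precisely this by replacing $Q_I$ with $Q_I'$, the largest ancestor of $Q_I$ in the tree with $\ell(Q_I')\leq d_{\cF}(I)$, for which \eqref{e:QI'simI} gives $\ell(Q_I')\sim_{\tau}\ell(I)$; that comparability is what makes the overlap counting work. Second, the measure lower bound $\mu(K_I)\gec\ell(K_I)^{d}$ is genuinely false for a general tree cube, and neither of your proposed fixes rescues it: every renormalisation \eqref{e:frost-case-1} only decreases mass, so $\mu^{J_i}(K)\leq\mu_m^{m}(K)$, and if $K$ meets $Q_0$ only near a corner then $\mu_m^{m}(K)$ is a bounded number of grid-cube masses, far below $\ell(K)^{d}$. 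Descending to stopping cubes gives at best $\cH^{d}_{\infty}(K\cap Q_0)\leq\sum_{J\in\Stop(J_i),\,J\subseteq K}(\diam J)^{d}\lec\mu^{J_i}(K)$, and the left-hand side can be tiny, so no lower bound in terms of $\ell(K)^{d}$ follows. This ``corner'' problem is exactly what the paper's proof of \eqref{e:C1} flags: it chooses a neighbouring cube $I'\subseteq 3I$ with $\cH^{d}_{\infty}(I'\cap Q_0)\gec\ell(I)^{d}$ (lower content regularity in $3I$), checks $I'\sim Q_I'$ so that Lemma \ref{l:trees} places $I'$ in some $\Tree(J_i)$, and only then converts content into $\mu^{J_i}$-mass through the stopping cubes covering $I'\cap Q_0$. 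Moreover, the cubes of $\cC$ disjoint from $Q_0$ cannot be treated this way at all and require the separate bounded-overlap argument of the paper's $\cC_2$ case (via the children $\tilde{Q}_I$ of $Q_I'$); your scheme has no substitute for it, since it relies on the broken multiplicity claim.

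Two smaller points. In the lower bound, the appeal to part (b) does not quite give the covering you need: \eqref{e:contains} covers $C_0B_T\cap E$, but your point $x''$ can lie as far out as $2C_0B_T$, so $E\cap B(x'',r/4)$ need not be covered by cubes of $\cC$; the paper avoids this by locating a Christ--David cube $Q\subseteq\hat T$ with $\ell(Q)=\ell(T)$ meeting $I_0$ and then working inside a subcube $Q'\subseteq B(x,r/2)\cap Q$ with $\ell(Q')\sim r$, which is automatically covered by $\cC$. Finally, the largest scales $r\gec\ell(T)$ (the paper's Case 3, where the cubes of $\cC$ outside $2B_T$ all have $\ell(I)\sim\tau\ell(T)$) need to be handled separately, since your ``small cube'' analysis is keyed to $d_{\cF}$ rather than to $\ell(T)$. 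These two items are fixable along the paper's lines, but the upper bound as written contains a genuine gap.
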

\begin{proof}

Let $x\in \hat{E}$ and $0<r<\diam \hat{E}\leq 2C_0\ell(T)$. We define
\[
\cC(x, r) = \{I \in \cC \, |\, I \cap B(x,r) \neq \emptyset\}.
\]

 We split into three cases, and in each case we prove first the upper estimate for being Ahlfors regular and then the lower estimate. 

\noindent {\bf Case 1:} $2r\leq d_{\cF}(x)$. Since $d_{\cF}$ is Lipschitz, this means $d_{\cF}(y)\geq d_{\cF}(x)-|x-y|$, and so if $I\in \cC(x,r)$, $y\in I$ is so that $d_{\cF}(I)=d_{\cF}(y)$, and $z\in I\cap B(x,r)$, then $|z-y|\leq \diam I=\sqrt{n} \ell(I)$, and so 
\begin{align*}
\ell(I)
& \stackrel{\eqref{eq:SS_prop2}}{\sim} \tau d_{\cF}(I)
=\tau d_{\cF}(y)
\geq \tau (d_{\cF}(x)-|x-y|)\\
& \geq \tau(2r-|x-z|-|z-y|)
 \gec \tau(r-\sqrt{n}\ell(I))
\end{align*}
and so for $\tau\ll \sqrt{n}$ we have $\ell(I)\gec \tau r$. This implies $\# \cC(x,r)\lec_{n,\tau} 1$, and so it is not hard to show that 
\[
\cH^{d}(\hat{E}\cap B(x,r))\sim_{n,\tau} 1.
\]

\noindent {\bf Case 2:} $8\ell(T)>2r> d_{\cF}(x)$.

Before we proceed, we record a few estimates. First, for $I\in \cC(x,r)$, if $2r> d_{\cF}(x)$, 
\begin{equation}
\label{e:ismall}
\tau^{-1} \ell(I) \stackrel{\eqref{eq:SS_prop2}}{<}  d_{\cF}(I)\leq d_{\cF}(y)
\leq d_{\cF}(x)+|x-y| <2r+r=3r
\end{equation}

Next, note that for all $I\in \cC$, $\ell(Q_I) \leq d_{\cF}(I)$. Let $Q_{I}'$ be the largest cube in $\cT$ containing $Q_{I}$ so that $\ell(Q_{I}')\leq d_{\cF}(I)$. 

\begin{lemma}
If $x\in \hat{E}$ and $d_{\cF}(x)< 2r<24 \ell(T)$, then  
\begin{equation}
\ell(Q_{I}')\sim_{\tau} \ell(I).
\label{e:QI'simI}
\end{equation}
\end{lemma}

\begin{proof}
If $Q_{I}'=T$, then $Q_{I}=T$, so
\begin{equation}
\ell(T)\leq d_{\cF}(I) \stackrel{\eqref{e:ismall}}{<}3r \lec \ell(T)
\end{equation}
and so $\ell(Q_{I}')=\ell(T)\sim_{\tau}  \ell(I)$. Otherwise, if $\ell(Q_{I}')<\ell(T)$, then $\ell(Q_{I}')\sim d_{\cF}(I)\stackrel{\eqref{eq:SS_prop2}}{\sim}_{\tau} \ell(I)$ by maximality of $Q_{I}'$ (indeed, if $\ell(Q_{I}')<\rho d_{\cF}(I)$, then its parent $Q_{I}''$ satisfies $\ell(Q_{I}'')<d_{\cF}(I)$ and $Q_{I}''\in \cT$ since $Q_{I}'\subsetneq T$, but this contradicts the maximality of $Q_I'$). This proves the lemma. 
\end{proof}

Recall \eqref{e:CinI} and let 
\[
\cC_{1}(x,r)=\{I\in \cC(x,r): I\cap Q_0 \neq\emptyset\} =\cC(x,r)\cap \cI^{m}(Q_0),\]
\[ \cC_{2}(x,r)=\cC(x,r)\backslash\cC_{1}(x,r).
\]

\begin{lemma}
If $x\in \hat{E}$ and $d_{\cF}(x)< 2r<24 \ell(T)$, then  
\begin{equation}
\label{e:C1}
\sum_{I\in \cC_{1}(x,r)}\ell(I)^{d} \lec r^{d}.
\end{equation}
\end{lemma}

\begin{proof}

We need an estimate like $\ell(I)^{d}\lec \cH^{d}_{\infty}(I\cap Q_0)$, but this may not necessarily be true: of course $I\cap Q_0\neq\emptyset$ since $I\in \cC_{1}(x,r)$, but it could be that $I$ only intersects $Q_0$ at a corner of $I$ so $ \cH^{d}_{\infty}(I\cap Q_0)$ could be very small compared to $\ell(I)^{d}$. To overcome this, we associate to $I$ a neighboring dyadic cube that does intersect $E$ in a large set. Let ${\rm Nei(I)}$ be the set of dyadic cubes $J\subseteq 3I$ with $\ell(J)=\ell(I)$. Then 
\[
\ell(I)^{d}\lec \cH^{d}_{\infty}(3I\cap Q_0)
\leq \sum_{J\in {\rm Nei(I)}} \cH^{d}_{\infty}(J\cap Q_0).
\]
Hence there is $I'\in {\rm Nei(I)}$ so that 
\[
\cH^{d}_{\infty}(I'\cap Q_0)\gec \ell(I)^{d}. 
\]
Since $I'\subseteq 3I$, we know that 
\[
\dist(I',Q_{I}')\leq \diam I+ \dist(I,Q_{I})\leq \sqrt{n}\ell(I)+d_{\cF}(I)
\stackrel{\eqref{eq:SS_prop2}}{\lec} 
 \tau^{-1} \ell(I)\sim \ell(Q_I').
\]

As $\ell(I)\sim_{\tau}\ell(Q_I')$,  for $M\gg \tau^{-1}$ large enough, $MB_{Q_{I}'}\cap I'\neq\emptyset$, and since $Q_{I}'\in \cT\subseteq \Tree(R)$ and $I'\in \cI^{m}(Q_0)$ (because $I'\cap Q_{0}\neq\emptyset$ and $I\in \cI^{m}$ by \eqref{e:CinI}), this implies $I'\sim Q_{I}'$, and so $I'\in S(R)$ (where $S(R)$ is as in Lemma \ref{l:trees}). In particular, there is $J_{i}=J_{i}(R)$ so that $I'\in \Tree(J_{i})$ by  Lemma \ref{l:trees}. We will use this fact shortly, but we need one more estimate: We now claim that
\begin{equation}
\label{e:overlap}
\sum_{I\in \cC_1(x,r)} \one_{I'}\lec \one_{B(x,2r)}.
\end{equation}
Indeed, if $y\in I_{1}'\cap\cdots \cap I_{\ell}'$ for some distinct $I_{1},...,I_{\ell}\in \cC_1(x,r)$, then the $I_{j}$ are disjoint and $y\in 3I_{1}\cap\cdots \cap 3I_{\ell}$, so Lemma \ref{lemma:sideIs} implies they have sizes all comparable to $I_{1}$ and are also contained in $9I_{1}$ (assuming $I_1$ is the largest). Thus if $|A|$ denotes the Lebesgue measure of a set $A$,
\[
\ell |I_{1}|
\sim \sum_{i=1}^{\ell} |I_{i}|=\left| \bigcup_{i=1}^{\ell} I_{i}\right|\leq |9I_{1}|\]
which implies $\ell\lec 1$, thus, $\sum_{I\in \cC_1(x,r)} \one_{I'}\lec 1$. Finally, note that
\[
\diam I=\sqrt{n} \ell(I) \stackrel{\eqref{eq:SS_prop2}}{<} \tau \sqrt{n} d_{\cF}(I)\stackrel{\eqref{e:ismall}}{<}3\sqrt{n}\tau r
\]
and since $I$ and $I'$ touch, $\dist(x,I')\leq \diam I+r<(3\sqrt{n}\tau+1)r$,  so for $\tau>0$ small enough, $I'\subseteq B(x,2r)$.  Thus, \eqref{e:overlap} follows. 

Thus,
\begin{align*}
\cH^{d}(\hat{E}\cap B(x,r))
& \lec \sum_{I\in \cC_1(x,r)}\ell(I)^{d}
  \lec \sum_{I\in \cC_1(x,r)} \cH_{\infty}^{d}(I'\cap Q_0)\\
& \leq \sum_{I\in \cC_1(x,r)} \sum_{i=1}^{N_{0}} \sum_{J\in \Stop(J_{i})\atop J\subseteq I'} (\diam J)^{d}\\
& \stackrel{\eqref{e:adr-on-tree}}{\lec} \sum_{I\in \cC_1(x,r)} \sum_{i=1}^{N_{0}} \sum_{J\in \Stop(J_{i})\atop J\subseteq I'} \mu^{J_{i}}(J) \\
& \leq \sum_{I\in \cC_1(x,r)} \sum_{i=1}^{N_{0}}  \mu^{J_{i}}(I') 
\stackrel{\eqref{e:overlap}}{ \lec} \sum_{i=1}^{N_{0}}  \mu^{J_{i}}(B(x,2r)) \lec r^{d}.
\end{align*}

This proves \eqref{e:C1}. 

\end{proof}

\begin{lemma}
If $x\in \hat{E}$ and $d_{\cF}(x)< 2r<8 \ell(T)$, then  
\begin{equation}
\label{e:C2}
\sum_{I\in \cC_{2}(x,r)}\ell(I)^{d} \lec r^{d}.
\end{equation}
\end{lemma}

\begin{proof}

For $I\in \cC_{2}(x,r)$, let $\tilde{Q}_{I}$ denote the child of $Q_{I}'$ containing the center of $Q_{I}'$. We claim that the cubes  $\{\tilde{Q}_{I}:I\in \cC_{2}(x,r)\}$ have bounded overlap.

Indeed, suppose there were $I_{1},....,I_{\ell}\in \cC_{2}(x,r)$ distinct and a point
\[
y\in \bigcap_{j=1}^{\ell} \tilde{Q}_{I_j}.
\]
We can assume that $\tilde{Q}_{I_{1}}$ is the largest, and since they are all cubes, this implies $\tilde{Q}_{I_1}\supseteq \tilde{Q}_{j}$ for all $j$. Since 
\begin{equation}
\label{e:I_jtoQ1}
\dist(I_j,\tilde{Q}_{I_1})\leq \dist(I_j,\tilde{Q}_{I_j})\leq \dist(I_j,Q_{I_{j}})\leq d_{\cF}(I_{j})
\stackrel{\eqref{eq:SS_prop2}}{\lec} \tau^{-1}\ell(I_{j})
\end{equation}
and the $I_{j}$ are disjoint, and because $\ell(I_{j})\stackrel{\eqref{e:QI'simI}}{\sim}_{\tau} \ell(Q_{I_{j}}')\sim \ell(\tilde{Q}_{I_j})$, for given $\ve>0$, there can be at most boundedly many $I_{j}$ (depending on $\ve$ and $\tau$) for which $\diam I_{j}\geq \ve  \ell(\tilde{Q}_{I_1})$. For the rest of the $j$, we have that 
\[
\dist(I_j,\tilde{Q}_{I_1})
\stackrel{\eqref{e:I_jtoQ1}}{\lec}\tau^{-1} \ell(I_j)< \frac{\ve}{\tau} \ell(\tilde{Q}_{I_1}),
\]
so for $\ve>0$ small enough, and recalling that $\rho<c_{0}/2$ in Theorem \ref{t:Christ}, this implies $I_{j}\subseteq c_0 B_{Q_{I_{j}}'}$. Since $I_{j}\cap \hat{T}\neq\emptyset$ and the balls $\{c_0 B_Q:Q\in \cD_k\}$ are disjoint for each $k$ by Theorem \ref{t:Christ}, this means $\emptyset\neq I_{j}\cap Q_{I_{j}}'\subseteq I_{j}\cap Q_{0}$, and so $I_{j}\in \cC_{1}(x,r)$, which is a contradiction since we assumed $I_j\in \cC_{2}(x,r)$. Thus, there are no other $j$, and so $\ell\lec_{\ve} 1$. This finishes the proof that the sets $\{\tilde{Q}_{I}:I\in \cC_{2}(x,r)\}$ have bounded overlap.\\ 

Fix $I\in \cC_{2}(x,r)$ and let $J\in \cC$ so that $J\cap \frac{c_0}{2}B_{\tilde{Q}_{I}} \neq\emptyset$. Then $\ell(J)<\tau d_{\cF}(J)\leq \tau \ell(\tilde{Q}_{I})$, so for $\tau$ small enough, $J\subseteq c_0 B_{\tilde{Q}_{I}}$. Thus, if 
\[
\{J_{I}^{i}\}_{i=1}^{L_{I}} = \{J\in \cC: J\cap\frac{c_0}{2}B_{\tilde{Q}_{I}}\neq\emptyset\},
\]
since the $\tilde{Q}_{I}$ have bounded overlap, so do the cubes 
\[
\{J_{I}^{i}: i=1,...,L_I, \;\; I\in \cC_{2}(x,r)\}.
\]
For  $I\in \cC(x,r)$ and $i=1,...,L_I$,
\[
\dist(I,J_{I}^{i})\leq \dist(I,\tilde{Q}_{I})\leq \dist(I,Q_I)\leq d_{\cF}(I)<2r,
\]
hence $J_{I}^{i}\in \cC_{1}(x,3r)$. Now we have by our assumptions that
\[
d_{\cF}(x)<2r<2\cdot(3r)=3\cdot (2r) < 3\cdot 8 \ell(T)=24\ell(T).
\]
Thus, \eqref{e:C1} holds for $3r$ in place of $r$, and so
\begin{align*}
\sum_{I\in \cC_{2}(x,r)}\ell(I)^{d}
& \sim \sum_{I\in \cC_{2}(x,r)}\ell(\tilde{Q}_{I})^{d}
\sim_{c}  \sum_{I\in \cC_{2}(x,r)}\cH^{d}_{\infty}\ps{\frac{c_{0}}{2} B_{\tilde{Q}_{I}}} \\
& \lec  \sum_{I\in \cC_{2}(x,r)}\sum_{J\in \cC \atop J\cap\frac{c_0}{2}B_{\tilde{Q}_{I}}\neq\emptyset} \ell(J)^{d}
=\sum_{I\in \cC_{2}(x,r)}\sum_{i=1}^{L_{I}}\ell(J_{I}^{i})^{d} \\
& \lec \sum_{J\in \cC_{1}(x,3r)^{d}}\ell(J)^{d}
\lec r^{d}
\end{align*}
where we used the bounded overlap property in the penultimate inequality.

\end{proof}

Thus, combining the two previous lemmas, we have that 
\[
\cH^{d}(\hat{E}\cap B(x,r))
\lec \sum_{i=1}^{2} \sum_{I\in \cC_{i}(x,r)}\ell(I)^{d}
\lec r^{d}.
\]

Now to complete the proof in this case, we need to show the reverse estimate. Let $I\in \cC(x,r/2)$. Then \eqref{e:ismall} implies that for $\tau$ small enough, $I\subseteq B(x,r)$. Moreover, since $I\in \cC$, $I\cap Q\neq\emptyset$ for some $Q\subseteq \hat{T}$ with $\ell(Q)=\ell(T)$. If $I\in \cC(x,r/2)$ is the cube so that $x\in \d_{d}I$, then for $\tau$ small,
\[
\dist(x,Q)
\leq \diam I \stackrel{\eqref{e:ismall}}{\leq}  3\sqrt{n}\tau r<\frac{r}{4}
\]
Thus, there is $y\in Q\cap B(x,r/4)$, and so we can find a subcube $Q'\subseteq B(x,r/2)\cap Q$ containing $y$ so that $\ell(Q')\sim r$ and the cubes from $\cC(x,r/2)$ cover $Q'$. Thus,
\begin{align*}
\cH^{d}(B(x,r)\cap \hat{E})
& \geq \sum_{I\in \cC(x,r/2)}\cH^{d}(\d_{d}I)
\sim \sum_{I\in \cC(x,r/2)} \ell(I)^{d}\\
& \gec \cH^{d}_{\infty}(Q')\gec \ell(Q')^{d}\sim r^{d}.
\end{align*}

\noindent {\bf Case 3:} $2C_{0}\ell(T)>r>4\ell(T)$. 

Note that by the previous case,  
\[
\cH^{d}(B(x,r)\cap 2B_{T}\cap \hat{E})
\leq \cH^{d}(2B_{T}\cap\hat{E})\lec \ell(T)^{d}\lec r^{d}. 
\]
So to prove upper regularity, we just need to verify
\[
\cH^{d}(B(x,r)\cap \hat{E}\backslash 2B_{T})\lec r^{d}.
\]
If $I\cap B(x,r)\backslash 2B_{T}\neq\emptyset$, and if $y\in I\backslash 2B_{T}$,
\begin{align*}
\ell(I)
& \sim \tau d_{\cF}(I)
\geq \tau( d_{\cF}(y)- \diam I)
\geq \tau\dist(y,T)-\tau \sqrt{n}\ell(I)\\
& \geq \tau \ell(T)-\tau\sqrt{n}\ell(I)
\end{align*}
and so for $\tau$ small enough,
\[
 \frac{\tau}{2}\ell(T)\leq \ell(I)
 \]
 Moreover, since $I\cap B(x,r)\neq\emptyset$, $x\in \hat{E}$, and $T\subseteq \bigcup_{J\in \cC_{\cF}} J$,
 
 \begin{align*}
 \ell(I) 
 &   <\tau d_{\cF}(I)\leq\tau( \ell(T)+\dist(I,T)) \\
 & < \tau (\ell(T)+\diam I + 2r+\dist(x,T))\\
&  <\tau(\ell(T)+\sqrt{n}\ell(I) + 4C_{0} \ell(T)+\diam \hat{E}) \\
& \lec \tau(\ell(T)+\ell(I))
\end{align*}
So for $\tau>0$ small enough, we also have $\ell(I)\lec \tau \ell(T)$, hence $\ell(I)\sim \tau \ell(T)$. There can only be at most boundedly many disjoint cubes $I\in  \cC$ with $\ell(I)\sim \tau \ell(T)$, and so
\[
\cH^{d}(\hat{E}\cap B(x,r)\backslash 2B_{T})\lec \ell(T)^{d}\sim r^{d}. 
\]

For the lower bound, if $x\in \hat{E}\cap 2B_{T}$, then $r>4\ell(T)$ implies by the previous case that
\[
\cH^{d}(\hat{E}\cap B(x,r))\geq \cH^{d}(\hat{E}\cap 2B_{T})\gec \ell(T)^{d}\sim r^{d}.
\]
Alternatively, if $x\in \hat{E}\backslash 2B_{T}$, then by the arguments above, if $I\in \cC$ contains $x$, then $\ell(I)\sim \tau \ell(T)\sim \tau r$, so for $\tau$ small enough, $I\subseteq B(x,r)$. Thus,
\[
\cH^{d}(\hat{E}\cap B(x,r))\geq \cH^{d}(\d_{d}I)\sim \ell(I)^{d}\sim r^{d}. 
\]
This completes the proof.

\end{proof}

This finishes the proof of the Main Lemma.

\section{A general lemma on quantitative properties}
\label{s:QP}
We now want to apply the approximation by Ahlfors regular sets obtained in the previous section to derive quantitative bounds on the sum of the $\beta$ coefficients. The method we present is quite easy and general. The idea is the following: let us pick one of the quantitative properties described by David and Semmes. For example, the BAUP (which stands for bilateral approximation by union of planes) (see \cite{of-and-on}, II, Chapter 3), the GWEC (generalised weak exterior convexity) (see \cite{of-and-on}, II, Chapter 3), or the LS (local symmetry),  see \cite{DS}, Definition 4.2. On each cube $R \in \Top$, we run a stopping time on $\Tree(R)$ where we stop whenever we meet a cube which does not satisfy the chosen property. By doing so, we obtain a new tree and consequently a new approximating Ahlfors regular set. This time, however, this set will turn out to be uniformly rectifiable exactly because it approximates $E$ at those scales where $E$ is very well behaved. 

Let us try to make all this precise. 
\begin{definition}[Quantitative property]
 By a quantitative property (QP) $\sP$ of $E$ we mean a finite set of real numbers $\{p_1,...,p_N\}$ with $0<p_1 \leq 1$ together with two subsets of $E \times \R_+ = E\times (0,\infty)$
 \begin{align*}
     \cG^\sP=\cG^{\sP}(p_1,...,p_N) \mbox{ and }  \cB^\sP= \cB^\sP(p_1,...,p_N), 
 \end{align*} 
 which depend on $\{p_1,...,p_N\}$, such that 
 \begin{align}
     \cG^\sP \cup \cB^\sP = E \times \R_+ \mbox{ and } \cG^\sP \cap  \cB^\sP = \emptyset.
 \end{align}
 We will call $\{p_1,...,p_N\}$ the {\it parameters} of $\sP$.
\end{definition}
If we want to specify the subset $E$ upon which we are applying a quantitative property $\sP$, we may write, for example, $\cG_E^\sP$, or $\cB_E^\sP$.
Let us give a few examples of quantitative properties described in the book \cite{of-and-on}:
\begin{itemize}
    \item[{\bf BWGL:}] The so-called `Bilateral Weak Geometric Lemma' (BWGL) is a quantitative property. Given a real number $\epsilon>0$, for each pair $(x,r) \in E \times \R_+$, BWGL asks whether there exists a plane $P$ so that
    \begin{align*}
        d_{B(x,r)}(E, P) < \epsilon.
    \end{align*}
    If one such a plane exists, then we put $(x,r) \in \good{BWGL}$; if not, then $(x,r) \in \bad{BWGL}$. This is clearly a partition of $E \times \R_+$. Hence BWGL is a QP with parameter $\ve$.
    \item[{\bf LS:}] The `Local Symmetry' (LS) property is defined as follows. Given $\ve>0$, for each pair $(x,r)\in E\times \R_{+}$, we say $(x,r)\in \bad{LS}(\ve,\alpha)$ if there are $y,z\in B(x,r)\cap E$ so that $\dist(2y-z,E)\geq \ve r$.
    \item[{\bf LCV}] For the quantitative property `Local Convexity' (LCV), we define $\bad{LCV}$ to be those $(x,r)\in E\times \R_{+}$ for which there are $y,z\in B(x,r)\cap E$ such that $\dist((y+z)/2,E)\geq \ve r$. 
%
    \item[{\bf WCD:}] Let  two positive numbers $C_0$ and $\ve$ be given. The `Weak Constant Density' (WCD) condition asks the following: for $(x,r) \in E \times \R_+$, does a measure $\mu_{x,r}$ exists, such that
    \begin{align*}
        & \spt(\mu_{x,r}) = E; \\
        & \mu_{x,r} \mbox{ is Ahlfors  } d-\mbox{regular with constant } C_0 \geq 1;\\
        & |\mu_{x,r}(y,s) -s^d | \leq \epsilon t^d \mbox{ for all } y \in E \cap B(x,r) \mbox{ and } 0 < s \leq r.
    \end{align*}
    If one such a measure $\mu_{x,r}$ exists, then we put $(x,r) \in \good{WCD}(C_0^{-1}, \epsilon)$. If not, then $(x,r) \in \bad{WCD}(C_0^{-1},\ve)$. This is clearly a partition of $E \times \R_+$ and so WCD is a QP with parameters $(C_0^{-1}, \ve)$. 
\item[{\bf BP:}] Let us give one more example. Let $1 \geq \theta>0$ be a positive real number. The `Big Projection' (BP) condition asks if for a pair $(x,r)$, there exists a $d$-dimensional plane $P$ such that
    \begin{align*}
        | \Pi_P(B(x,r) \cap E) | \geq \theta r^d,
    \end{align*}
    where $\Pi_P$ is the standard orthogonal projection onto $P$ and $| \cdot |$ is the $d$-dimensional Lebesgue measure on $P$.  We put $(x,r) \in \good{BP}(\theta)$ if this is the case; otherwise $(x,r) \in \bad{BP}(\theta)$. Thus BP is a QP with parameter $\theta>0$.
\end{itemize}

\begin{definition}\label{d:haus-cont} Fix a (small) parameter $\ve_1>0$ and two (large) constants $C_1, C_2 \geq 1$ and
let $\sP$ be a quantitative property with parameters $\{p_1,...,p_N\}$. We say that $\sP$ is $(\epsilon_1, C_1, C_2)$-\textit{continuous}, if there exist positive constants $0<c_1,...,c_N<\infty$ depending on $\epsilon_1$ and $C_{1}$ such that the following holds. Let $E_1$ and $E_2$ be two subsets of $\R^n$ and let $B=B(x_B, r_B)$ be a ball so that 
\begin{align*}
    & B \mbox{ is centered on } E_1; \\
    & (x_B, r_B) \in \cG^\sP_{E_1}(p_1,...,p_N);\\ 
    & d_{C_2 B}(E_1,E_2) < \epsilon.
\end{align*}
If $B'=B(x_{B'}, r_{B'})$ is a ball so that
\begin{align*}
    & B' \mbox{ is centered on } E_2;\\
    & C_2 B' \subset B;\\ 
    & r_{B'} \geq \frac{r_B}{C_1},
    \end{align*}
then 
\begin{align} \label{e:sP-h-c}
    (x_{B'},r_{B'}) \in \cG^{\sP}_{E_2}(c_1p_1,...,c_Np_N).
\end{align} 

\end{definition}

\begin{remark}\label{rem:stability}
In particular a continuous quantitative property is \textit{monotonic} (or \textit{stable}) in the following sense; take a set $E$ and a ball $B$ centered on $E$ with $(x_B, r_B) \in \cG^\sP_E(p_1,...,p_N)$. If we assume that $\sP$ is continuous and we take $E_1=E_2=E$ in Definition \ref{d:haus-cont}, then we see that $(x_{B'}, r_{B'}) \in \cG^\sP_E(c_1p_1,...,c_Np_N)$ whenever $C_2 B' \subset B $ and $r_{B'} \geq \frac{r_B}{C_1}$. 
\end{remark}
Let us look at our concrete examples of QP, and see whether they are continuous, and thus stable. 
\begin{itemize}
    \item One can quite easily check that BWGL, LS, LCV, and BP are stable quantitative properties. 
    \item On the other hand, the WCD is not.
\end{itemize}
%

\begin{definition}[QP guaranteeing uniform rectifiability] We say a QP (with parameters $p_{1},...,p_{N}$) {\it guarantees uniform rectifiability}  for Ahlfors $d$-regular sets with constant $C_{1}$ if, whenever $A$ is Ahlfors $d$-regular with constant $C_{1}$ and 
\begin{align}\label{e:QPUR-carleson}
    \chara_{\cB^{\sP}(p_1,...,p_N)}\,\dr d\hd|_A \mbox{ is a Carleson measure on } A \times R_+,
\end{align}
then $A$ is a uniformly rectifiable set. 
Conversely, if $A$ is uniformly rectifiable, then we say a QP (with parameters $p_{1},...,p_{N}$) is {\it guaranteed by uniform rectifiability} if the measure in \eqref{e:QPUR-carleson} is a Carleson measure for the  parameters $(p_1,...,p_N)$.
\end{definition}

Let us go back to our examples. 
\begin{itemize}
    \item In the two monographs \cite{DS} and \cite{of-and-on}, David and Semmes prove that the properties BWGL (\cite{of-and-on}, II.2, Proposition 2.2), 
    and WCD are indeed examples of QP guranteeing uniform rectifiability (see \cite{of-and-on}, I.2, Proposition 2.56, and \cite{tolsa}, Theorem 1.1). To further comment on the remark above, consider BWGL: if an Ahlfors $d$-regular set $A$ is uniformly rectifiable, then there exists a universal constant $\epsilon_0>0$ so that for all $0< \epsilon < \epsilon_0$, we have that
    \begin{align}\label{e:BWGL-carleson}
    \int_{B \cap E} \int_0^R \chara_{\bad{BWGL}(\epsilon)}
    (x,r)\, \dr\, d \hd(x) \leq C(\epsilon) r_B^d,
\end{align}     
for all balls $B$ centered on $E$ with $r_B \leq \diam(E)$. In general, one may have that $C(\epsilon) \to \infty$ as $\epsilon \to 0$. On the other hand, it suffices to find a sufficiently small $\epsilon>0$ for which \eqref{e:BWGL-carleson} holds to prove that $A$ is uniformly rectifiable. 
    \item The property BP, on the other hand, does not guarantee uniform rectifiability. The standard 4-corner Cantor set is purely unrectifiable but still satisfy the Carleson measure condition above since it has large projections in some directions (although of course not {\it many} directions), see \cite[Part III Chapter 5]{Dav91}.
\end{itemize}

Let now $\sP$ be a continuous quantitative property with parameters $\{p_1,...,p_N\}$. For a cube $Q_0 \in \cD$,  we let 
\begin{align}\label{e:cube-B-G}
& \cB^\sP(Q_0) = \cB^\sP(Q_0,p_1,...,p_N)\\
 & \enskip \enskip \enskip \enskip := \Big\{ Q \in \cD \, |  Q \subset Q_0; (\zeta_Q, \ell(Q))  \in \cB^\sP \Big\};\nonumber \\
& \cG_{}^\sP(Q_0) = \cG^\sP(Q_0,p_1,...,p_N) := \cD(Q_0) \setminus \cB_{}^\sP.\nonumber 
\end{align}
Thus we put
\begin{align*}
   \sP(Q_0,p_1,...,p_N) := \sP(Q_0) := \sum_{ Q \in \cB_{Q_0}^\sP } \ell(Q)^d. 
\end{align*}

The following is the main result of this section. In later sections, we will show how the comparability results (Theorems \ref{t:baup} and \ref{t:LS}) follow as corollaries. 

\begin{lemma}\label{l:meta}
Let $E \subset \R^n$ be a $(c,d)$-lower content regular set, and let $0<\ve<1$, $C_{2}\geq 1$, and  $C_1>4C_2/\rho$. There is $C_{0}'$ depending on $c$ so that the following holds. Let $\sP$ be a QP of $E$ with parameters $\{p_1,...,p_N\}$ such that
\begin{align}
  & \sP \mbox{ is } (\ve, C_1,C_2)\mbox{-continuous. }  \mbox{ with constants $c_{1},...,c_{N}$} \label{e:sPmeta-cont}\\
    & \sP \mbox{ guarantees (and is guaranteed by) UR for $C_{0}'$-Ahlfors $d$-regular sets} \\
    & \qquad \mbox{for parameters $c_1p_1,...,c_Np_N$};\label{e:sPmeta1} \notag \\
\end{align}
Then for any $Q_{0}\in \cD$ \begin{align} \label{e:meta-beta-est}
    \beta_{E}(Q_0) \lesssim_{c,C_1, \ve} \hd(Q_0) + \sP(Q_0, c_1p_1,...,c_Np_N).
\end{align}
\end{lemma}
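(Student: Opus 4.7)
The plan is to perform a two-stage stopping-time decomposition and then to use the Main Lemma to approximate $E$ by uniformly rectifiable sets, finally transferring the $\beta^2$ Carleson packing from the approximants back to $E$. First I would apply the Main Lemma to $Q_0$ with some large $k_0$ (eventually sent to infinity) and with a parameter $\tau>0$ to be fixed, obtaining $\Top(k_0)$ with the packing bound $\sum_{R\in \Top(k_0)}\ell(R)^d \lesssim \cH^d(Q_0)$ and trees $\Tree(R)$ partitioning $\cD(k_0)$. Inside each $\Tree(R)$, I would run a second stopping-time that halts at any cube $Q$ with $(\zeta_Q,\ell(Q))\in \cB^\sP$; this produces a family of sub-stopping-time regions $\cT$ whose maximal cubes $T(\cT)$ satisfy
\[
\sum_{\cT}\ell(T(\cT))^d \;\lesssim\; \sum_{R\in \Top(k_0)}\ell(R)^d + \sum_{Q\in \cB^\sP(Q_0)}\ell(Q)^d \;\lesssim\; \cH^d(Q_0)+\sP(Q_0),
\]
and such that every non-minimal cube of $\cT$ lies in $\cG^\sP$.

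Second, for each such $\cT$ I would invoke part (2) of the Main Lemma to construct the Ahlfors $d$-regular approximant $E(\cT)$. By \eqref{e:adr-corona} and \eqref{e:whitney-like}, one has $d_{C_2 B_Q}(E,E(\cT))\lesssim \tau$ for every cube $Q\in \cT$ that is not among the stopping cubes (and uniformly in the dilation $C_2$ if we shrink $\tau$ accordingly). Choosing $\tau$ so that this distance is strictly less than $\ve$, the $(\ve,C_1,C_2)$-continuity of $\sP$ transfers the good property from $E$ to $E(\cT)$: for every non-stopping $Q\in \cT$ and every ball $B(y,s)$ centered on $E(\cT)$ with $C_2 B(y,s)\subseteq B(\zeta_Q,\ell(Q))$ and $s\geq \ell(Q)/C_1$, we obtain $(y,s)\in \cG^\sP_{E(\cT)}(c_1p_1,\dots,c_Np_N)$. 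Since the Christ-David cubes of $E(\cT)$ whose associated pair lies in $\cB^\sP_{E(\cT)}(c_1p_1,\dots,c_Np_N)$ are therefore essentially confined to scales and locations near the stopping cubes of $\cT$, a standard Carleson-packing estimate using \eqref{e:ADR-packing} (applied inside $\cT$) shows that $\chara_{\cB^\sP_{E(\cT)}}\,\dr\,d\hd|_{E(\cT)}$ is Carleson on $E(\cT)$ at scales below $\ell(T)$, with constant depending only on $c,\tau,C_0,d,C_1$. Hypothesis \eqref{e:sPmeta1} then yields that $E(\cT)$ is uniformly rectifiable, with UR constants independent of $\cT$.

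Third, since $E(\cT)$ is Ahlfors regular and UR, the classical David–Semmes $\beta^2$-Carleson packing theorem (equivalently Theorem \ref{t:AS} specialised to the regular set $E(\cT)$, where the BWGL term is absorbed) gives
\[
\sum_{Q'\subseteq T'} \beta_{E(\cT)}^{d,2}(3B_{Q'})^2\,\ell(Q')^d \;\lesssim\; \ell(T')^d
\]
where the sum runs over the Christ-David cubes of $E(\cT)$ inside the top $T'$ corresponding to $T=T(\cT)$. A routine Hausdorff-closeness comparison, using \eqref{e:contains} and \eqref{e:adr-corona} together with the elementary fact that $\cH^d_\infty\leq \cH^d$ on the regular set $E(\cT)$, then gives $\beta_E^{d,2}(3B_Q)\lesssim \beta_{E(\cT)}^{d,2}(A B_Q)+\tau$ for every $Q\in \cT$ and some absolute dilation $A$. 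Squaring, summing over $Q\in \cT$, and absorbing the $\tau^2$-terms (by choosing $\tau$ small compared to the UR constant of $E(\cT)$ and invoking $\sum_{Q\in \cT}\ell(Q)^d\lesssim \ell(T)^d$, which follows from the Ahlfors regularity of $E(\cT)$), one obtains $\sum_{Q\in \cT}\beta_E^{d,2}(3B_Q)^2\,\ell(Q)^d\lesssim \ell(T)^d$.

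Finally, summing over all sub-stopping-time regions $\cT$ and all $R\in \Top(k_0)$ and using the packing estimate for their tops produces
\[
\sum_{Q\subseteq Q_0,\,Q\in \cD(k_0)} \beta_E^{d,2}(3B_Q)^2\,\ell(Q)^d \;\lesssim\; \sum_{\cT}\ell(T(\cT))^d \;\lesssim\; \cH^d(Q_0)+\sP(Q_0,c_1p_1,\dots,c_Np_N),
\]
and letting $k_0\to\infty$ gives \eqref{e:meta-beta-est}. The most delicate point I expect is the second step: choosing $\tau$ and $\ve$ compatibly so that (i) the Hausdorff approximation $d_{C_2 B_Q}(E,E(\cT))<\ve$ holds at \emph{every} scale down to the minimal cubes of $\cT$, and (ii) the pairs $(y,s)$ centered on $E(\cT)$ that lie in $\cB^\sP_{E(\cT)}$ are truly confined to a Carleson-sparse set, so that $E(\cT)$ really is uniformly rectifiable with uniform constants rather than merely quasi-flat in a bounded range of scales. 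This requires a careful matching between the Christ-David cubes of $E(\cT)$ and the Whitney-type cubes $\cC$ produced by the Main Lemma.
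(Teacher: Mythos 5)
Your overall strategy is the same as the paper's: a second stopping time inside each $\Tree(R)$ that halts at $\cB^{\sP}$-cubes, the Main Lemma applied to each pruned tree to produce an Ahlfors regular approximant, the continuity of $\sP$ to transfer goodness to the approximant at scales above the local stopping scale (and triviality of the skeleton below it) so that $\sP$ guarantees its uniform rectifiability, then the David--Semmes $\beta^{2}$ packing on the approximant, the comparison of $\beta_{E}$ with $\beta_{E(\cT)}$ via Lemma \ref{l:AS-beta}, and a final summation over the tops controlled by \eqref{e:ADR-packing} and $\sP(Q_0)$. Up to the vagueness expected of a sketch, those steps match the paper.

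There is, however, one genuine gap, in your absorption of the error terms. You bound the discrepancy by $\beta_{E}^{d,2}(3B_Q)\lesssim \beta_{E(\cT)}^{d,2}(AB_Q)+C\tau$ for every $Q\in\cT$ and then sum $\tau^{2}\ell(Q)^{d}$ over the tree, asserting that $\sum_{Q\in\cT}\ell(Q)^{d}\lesssim\ell(T)^{d}$ ``by the Ahlfors regularity of $E(\cT)$.'' That packing bound is false: $\cT$ contains every cube between its top and its minimal cubes, so even when $E$ is a $d$-plane a tree of depth $K$ gives $\sum_{Q\in\cT}\ell(Q)^{d}\sim K\,\ell(T)^{d}$, which is unbounded as $k_{0}\to\infty$; Ahlfors regularity of the approximant $E(\cT)$ says nothing about the number of generations in $\cT$, and no choice of $\tau$ (fixed independently of $k_{0}$) rescues a per-cube additive error of size $\tau$. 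The correct argument cannot discard the decay hidden in \eqref{e:adr-corona}: the error term coming from Lemma \ref{l:AS-beta} at a cube $Q$ is controlled by $\frac{1}{\ell(Q)^{d}}\int_{6B_Q\cap E}\bigl(\dist(x,E(\cT))/\ell(Q)\bigr)^{2}d\cH^{d}_{\infty}$, and $\dist(x,E(\cT))\lesssim\tau d_{\cF}(x)$ is comparable to the size of the nearby Whitney/approximation cubes $S$, not to $\ell(Q)$. Exploiting the square, for each fixed $S$ the sum over tree cubes $Q$ with $S\cap 6B_Q\neq\emptyset$ of $\ell(S)^{d+2}/\ell(Q)^{2}$ converges geometrically to $\lesssim\ell(S)^{d}$, and then $\sum_{S}\ell(S)^{d}\lesssim\cH^{d}(E(\cT))\lesssim\ell(T)^{d}$ by the Ahlfors regularity of $E(\cT)$ — this is exactly the paper's estimate of the term $I_{2}$ in Lemma \ref{l:Tree-bound}, and it is the step your uniform-in-$Q$ bound by $\tau$ throws away. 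With that replacement (and a slightly more careful statement of why the bad pairs of $E(\cT)$ occupy only a bounded band of scales around $d_{\cF}(x)$, which is where the Carleson bound for $\cB^{\sP}_{E(\cT)}$ really comes from, rather than from \eqref{e:ADR-packing}), your argument becomes the paper's proof.
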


The proof of Lemma \ref{l:meta} will take up the rest of this section.
Let us get started by first modifying the tree structure of $\Top(k_0)$, as in the statement of the Main Lemma by introducing a further stopping condition which is related to the QP $\sP$. Let $R\in \Top(k_0)$ and $R'\in \Tree(R)$. Let $\wt\Stop(R')$ be the maximal cubes in $\Tree(R)$ that are either in $\Stop(R)$ or contain a child in $\cB^{\sP}(Q_0)$, and let $\wt \Tree(R')$ be the subfamily of cubes $Q \in \Tree(R)$ contained in $R$ that are not properly contained in a cube from $\wt\Stop(R')$.  
%
In other words, $\wt \Tree(R')$ is a pruned version of $\Tree(R)$, where we cut whenever we found a cube $Q \in \cB^\sP_{\cD}$. 

Let $\Next_{0}(R)=\{R\}$ and for $j\geq 0$, if we have defined $\Next_{k}(R) $, let 
\[
\Next_{j+1}(R) = \bigcup_{R'\in \Next_{j}}\bigcup_{Q\in \wt\Stop(R')} \Child(Q).
\]

%
This process terminates at some integer $K_R$ since $\Tree(R)$ is finite. Enumerate $\Next_{j} = \{Q_{i}^{j}\}_{i=1}^{i_{j}}$.

\begin{lemma}\label{l:Tree-bound}
Let $R \in \Top(k_0)$ and let $0 \leq j \leq K_R$ and $1\leq i\leq i_j$. Then there exists a constant $c_1<1$ so that
\begin{align} \label{e:beta-bound-tree}
    \sum_{Q \in \wt \Tree(Q_i^j)} \beta_{E}^{d,2}(3B_Q) \ell(Q)^d \leq C(c_1, \tau, n, C_0) \ell(Q_i^j)^d. 
\end{align}
\end{lemma}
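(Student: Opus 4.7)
The strategy is to apply the Main Lemma to the pruned stopping-time region $\wt\Tree(Q_i^j)$, transfer the $\beta$-sum estimate to the resulting Ahlfors regular approximation, and then pull it back to $E$. Concretely, I would let $\cF$ denote the minimal cubes of $\wt\Tree(Q_i^j)$ and let $\hat E = E(\wt\Tree(Q_i^j))$ be the Ahlfors $d$-regular set produced by parts (a)--(d) of the Main Lemma, where $\tau>0$ will be chosen small depending on $\ve$, $c_1,\dots,c_N$, $C_1$, and $C_2$. The key geometric input is \eqref{e:adr-corona} together with \eqref{e:whitney-like}: for any cube $Q\in \wt\Tree(Q_i^j)$ which properly contains some cube of the tree, one has $d_{\cF}(x)\lesssim \ell(Q)$ for $x$ near $Q$, hence $d_{C_2 B_Q}(E,\hat E)\lesssim \tau$, so choosing $\tau$ small makes this Hausdorff distance smaller than the continuity threshold $\ve$ of $\sP$.

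Next I would produce Christ-David cubes $\hat \cD$ on $\hat E$, and to each $Q\in \wt\Tree(Q_i^j)\setminus \cF$ associate some $\hat Q\in \hat\cD$ with $\ell(\hat Q)\sim \ell(Q)$ and $B_{\hat Q}\subseteq C_2 B_Q$ and $\ell(Q)/C_1 \leq \ell(\hat Q)$; the existence of such $\hat Q$ is guaranteed because $\hat E$ passes within $\tau\ell(Q)$ of $\zeta_Q$. By assumption $(\zeta_Q,\ell(Q))\in \cG^\sP(p_1,\dots,p_N)$ (this is precisely why the pruning was done), so the continuity hypothesis \eqref{e:sPmeta-cont} gives $(\zeta_{\hat Q},\ell(\hat Q))\in \cG^\sP_{\hat E}(c_1p_1,\dots,c_N p_N)$. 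The cubes of $\hat\cD$ that lie over cubes in $\cF$ (i.e. stopped cubes) may be in $\cB^\sP_{\hat E}$, but by the Carleson packing of $\Stop$ inside $\wt\Tree(Q_i^j)$ (which follows from the fact that each minimal cube has a parent in $\cG^\sP$, and the bounded multiplicity of the association $Q\mapsto \hat Q$) their total contribution is bounded by a constant times $\ell(Q_i^j)^d$. Hence $\cB^\sP_{\hat E}$ satisfies a Carleson packing condition on the relevant piece of $\hat E$, so by hypothesis \eqref{e:sPmeta1} this piece is uniformly rectifiable, and then by the converse direction (also in \eqref{e:sPmeta1}) the David--Semmes $\beta$-sum for $\hat E$ is Carleson, giving
\[
\sum_{\hat Q\subseteq \hat R} \beta_{\hat E}^{d,2}(3B_{\hat Q})^2 \ell(\hat Q)^d \lesssim \ell(\hat R)^d
\]
for a cube $\hat R\in \hat\cD$ of size $\sim \ell(Q_i^j)$ whose associated ball contains $Q_i^j$.

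Finally I would transfer the estimate back to $E$. For $Q\in \wt\Tree(Q_i^j)$, take an optimal plane $L$ for $\hat E$ in $3B_{\hat Q}$ and use that $E\cap 3B_Q$ is contained in a $\tau\ell(Q)$-neighborhood of $\hat E\cap C_2 B_Q$ (by part (c) of the Main Lemma, provided $Q$ is non-minimal) to obtain
\[
\beta_E^{d,2}(3B_Q)^2 \lesssim \beta_{\hat E}^{d,2}(3B_{\hat Q})^2 + C\tau^{2},
\]
where the $\tau^2$ error comes from the fact that every point of $E\cap 3B_Q$ is within $\tau\ell(Q)$ of $L$; summing this over $Q\in \wt\Tree(Q_i^j)$ the first term contributes $\lesssim \ell(Q_i^j)^d$ by the previous step, and the second contributes $C\tau^2 \sum_{Q\in \wt\Tree(Q_i^j)} \ell(Q)^d$, which is $\lesssim \ell(Q_i^j)^d$ after using Ahlfors regularity of $\hat E$ together with \eqref{e:contains}--\eqref{e:whitney-like} to bound $\sum \ell(Q)^d$ by $\cH^d(\hat E\cap C_0 B_{Q_i^j})\sim \ell(Q_i^j)^d$. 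The main obstacle I expect is bookkeeping in this last transfer step: one has to pair cubes $Q$ on $E$ with cubes $\hat Q$ on $\hat E$ in a way that has bounded multiplicity so that the $\beta$-sum on $\hat E$ really dominates the $\beta$-sum on $E$, and one has to handle the minimal (stopped) cubes in $\cF$ separately via a crude Carleson-type count since the continuity hypothesis is not available there.
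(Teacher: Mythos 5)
There is a genuine gap, and it sits in your transfer step. Your per-cube error is a constant $C\tau^{2}$, so after summing you need $\sum_{Q\in\wt\Tree(Q_i^j)}\ell(Q)^{d}\lec\ell(Q_i^j)^{d}$, and this is false: a stopping-time region contains \emph{every} cube between its top and its minimal cubes, so this sum counts each location once per scale. (Take $E$ a $d$-plane and $\cB^{\sP}=\emptyset$: then $\wt\Tree(Q_0)$ is all of $\cD(k_0)$ below $Q_0$ and the sum is $\sim k_0\,\ell(Q_0)^{d}$, which blows up as $k_0\to\infty$.) Ahlfors regularity of $\hat E$ together with \eqref{e:contains}--\eqref{e:whitney-like} bounds $\sum_{I\in\cC}\ell(I)^{d}$, i.e.\ one Whitney ``layer'', not the full tree sum. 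The paper's proof avoids exactly this by invoking Lemma \ref{l:AS-beta}: the error attached to a cube $Q$ is not $\tau^{2}$ but $\ell(Q)^{-d}\int_{6B_Q\cap E}\big(\dist(x,\hat E)/\ell(Q)\big)^{2}d\cH^{d}_{\infty}$, and since $\dist(x,\hat E)\lec\tau d_{\cF}(x)$ is proportional to the \emph{local stopping scale} (the cubes $S\in\Approx(Q_i^j)$), this error decays like $(\ell(S)/\ell(Q))^{2}$. One then sums over the scales of $Q$ above $\ell(S)$ as a geometric series, and only at the end uses $\sum_{S}\ell(S)^{d}\lec\cH^{d}(\hat E)\lec\ell(Q_i^j)^{d}$. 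Without this decay in $\ell(S)/\ell(Q)$ your second sum cannot be closed, so the argument as proposed fails.

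Your verification that $\hat E$ is uniformly rectifiable also has two soft spots. First, the Carleson condition must be checked for \emph{all} pairs $(x,r)\in\hat E\times\R_{+}$, including those with $r$ at or below the local stopping scale $d_{\cF}(x)$, where continuity of $\sP$ is unavailable; your proposed fix --- a ``Carleson packing of $\Stop$'' deduced from each minimal cube having a good parent plus bounded multiplicity --- is not a valid implication (and for a merely lower content regular $E$ the minimal cubes need not satisfy $\sum\ell(S)^{d}\lec\ell(Q_i^j)^{d}$ anyway). What works, and what the paper does, is: for $\eta\tau d_{\cF}(x)\le r\le\eta^{-1}d_{\cF}(x)$ the scales form a band of bounded ratio, so Ahlfors regularity of $\hat E$ alone gives the Carleson bound \eqref{e:carleson2}; for $r\le\eta\tau d_{\cF}(x)$ the ball meets only a bounded union of $d$-dimensional faces, which is UR, so the hypothesis that $\sP$ is \emph{guaranteed by} UR gives \eqref{e:carleson1} (this matters: for LS, say, the skeleton genuinely has bad small pairs near edges, so they must be absorbed this way rather than declared good). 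Second, for the scales above the stopping scale, goodness must be produced for every pair $(x,r)$, so the association has to run from $(x,r)$ back to a tree cube $\wh P$ with $B(x,C_2r)\subset B_{\wh P}$ and $r\gec\ell(\wh P)$ (the paper's Lemma \ref{l:P-in-ball} plus choosing an ancestor of the right scale), not from tree cubes $Q$ to cubes $\hat Q$ of $\hat\cD$ as you set it up: that direction does not exhaust the pairs for which the Carleson condition must be verified.
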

To prove Lemma \ref{l:Tree-bound}, we will need the following Lemma from \cite{AS18}.
\begin{lemma}[{\cite{AS18}, Lemma}] \label{l:AS-beta}
Let $1 \leq p < \infty$ and $E_1$, $E_2$ lower content $d$-regular subsets of $\R^n$; let moreover $x \in E_1$ and choose a radius $r>0$. Then if $y \in E_2$ is so that $B(x,r) \subset B(y, 2r)$, we have
\begin{align}
    \beta_{E_1}^{p,d}(x,r) \lesssim \beta_{E_2}^{p,d}(y, 2r) + \ps{\frac{1}{r^d} \int_{E_1 \cap B(x,2r)} \ps{\frac{\dist(y, E_2)}{r}}^p \, d \hdc(y)}^{\frac{1}{p}}. 
\end{align}
\end{lemma}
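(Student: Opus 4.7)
The plan is to fix a $d$-plane $L$ nearly achieving the infimum in $\beta_{E_2}^{d,p}(y, 2r)$, and to argue that $\beta_{E_1}^{d,p}(x, r, L)$ is bounded by the claimed right-hand side. Since $\beta_{E_1}^{d,p}(x, r) \leq \beta_{E_1}^{d,p}(x, r, L)$, this is enough.

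First, for each $z \in E_1 \cap B(x, r)$ I would pick $z' \in E_2$ with $|z - z'|$ arbitrarily close to $\dist(z, E_2)$. The triangle inequality gives
\[
\dist(z, L) \leq |z - z'| + \dist(z', L),
\]
so for any $t \in (0, 1]$ the event $\dist(z, L) > tr$ forces either $\dist(z, E_2) > tr/2$ or $\dist(z', L) > tr/2$. This splits the relevant sublevel set into a piece $A_t \subseteq \{z \in E_1 \cap B(x, r) : \dist(z, E_2) > tr/2\}$ and a piece $C_t$ contained in the $(tr/2)$-neighborhood of
\[
F_t := \{z' \in E_2 \cap B(y, 3r) : \dist(z', L) > tr/2\}
\]
(the slightly larger ball accommodates $z' \in B(x, r + r/2) \subseteq B(y, 3r)$ whenever $t \leq 1$).

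The key step is bounding $\hdc(C_t) \lesssim \hdc(F_t)$, which is where lower content regularity of $E_2$ is used essentially. I would take a maximal $(tr/2)$-separated set $\{z'_i\}$ in $F_t$: the balls $B(z'_i, tr/4)$ are then disjoint, and each satisfies $\hdc(E_2 \cap B(z'_i, tr/4)) \gtrsim (tr)^d$, so their number is at most $C \hdc(F_t) (tr)^{-d}$. Since the $(tr/2)$-neighborhood of $F_t$ is covered by $\bigcup_i B(z'_i, tr)$, subadditivity of $\hdc$ on ball covers yields $\hdc(C_t) \lesssim \hdc(F_t)$.

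Finally I would assemble the pieces through the layer-cake form of $\beta$:
\[
\int_0^1 \hdc(\{z \in E_1 \cap B(x, r) : \dist(z, L) > tr\})\, t^{p-1}\, dt \lesssim \int_0^1 \hdc(A_t)\, t^{p-1}\, dt + \int_0^1 \hdc(F_t)\, t^{p-1}\, dt.
\]
Reversing the layer-cake identity turns the first integral into $\int_{E_1 \cap B(x, r)} \ps{\dist(z, E_2)/r}^p d\hdc(z)$, which is dominated by the corresponding integral over $E_1 \cap B(x, 2r)$, and the second into a quantity comparable to $r^d \beta_{E_2}^{d,p}(y, 2r)^p$, where the minor discrepancy between $B(y, 3r)$ and $B(y, 2r)$ is absorbed through the essentially trivial monotonicity of the $\beta$-integrand with respect to concentric balls of comparable radii. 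Dividing by $r^d$ and taking $p$-th roots delivers the inequality. The main technical obstacle is the covering estimate $\hdc(C_t) \lesssim \hdc(F_t)$, which must avoid any appeal to a genuine Hausdorff measure and relies crucially on the lower content regularity of $E_2$.
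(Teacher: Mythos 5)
Your overall strategy (fix a near-optimal plane $L$ for $E_2$, split $\{\dist(\cdot,L)>tr\}$ via the triangle inequality, and finish with the Choquet/layer-cake identity) is the right route, but the key covering estimate contains a genuine error. From ``the balls $B(z_i',tr/4)$ are disjoint and each satisfies $\hdc(E_2\cap B(z_i',tr/4))\gtrsim (tr)^d$'' you conclude that the number $N$ of net points is $\lesssim \hdc(F_t)(tr)^{-d}$. That deduction needs $\sum_i \hdc\bigl(E_2\cap B(z_i',tr/4)\bigr)\lesssim \hdc(F_t)$, i.e.\ (quasi-)superadditivity of Hausdorff content over disjoint balls, which is false: content is only subadditive. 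Lower content $d$-regularity does not exclude pieces of $E_2$ of dimension larger than $d$ (e.g.\ $E_2$ could contain an $n$-dimensional chunk far from $L$); in that case $\hdc(F_t)\sim r^d$, so your bound would give $N\lesssim t^{-d}$, while a maximal $(tr/2)$-separated set in an $n$-dimensional region of diameter $\sim r$ has $N\sim t^{-n}\gg t^{-d}$. So the intermediate claim is not just unproven but wrong, and the subsequent bound $\hdc(C_t)\lesssim N(tr)^d$ collapses. The needed inequality $\hdc(C_t)\lesssim \hdc(F_t')$ (with $F_t'$ the version of $F_t$ with threshold $tr/4$ and a slightly larger ball) is nevertheless true, and the repair is to charge an \emph{efficient cover} of $F_t'$ rather than the content of a union: fix balls $\{B_j\}$ covering $F_t'$ with $\sum_j r_j^d\leq 2\hdc(F_t')$; for cover balls with $r_j\gtrsim tr$ a bounded dilation swallows the $(tr/2)$-neighborhood of the part of $F_t$ they meet, while any $z\in F_t$ met only by small cover balls forces, by lower regularity of $E_2$, cover mass $\gtrsim (tr)^d$ among small balls meeting $B(z,tr/8)$, and each small cover ball can be charged by at most one ball of a $(tr)$-separated net. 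Summing gives $\hdc(N_{tr/2}(F_t))\lesssim \hdc(F_t')$, and the passage from $F_t$ to $F_t'$ is absorbed by the change of variable in $t$.

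A secondary point: your closing remark that the discrepancy between $B(y,3r)$ and $B(y,2r)$ is absorbed by ``monotonicity of the $\beta$-integrand with respect to concentric balls'' goes the wrong way: a $\beta$-quantity over a larger ball is not trivially dominated by the one over a smaller concentric ball. Either keep a slightly larger ball on the right-hand side (harmless, since the lemma is applied in the paper with generously enlarged balls such as $6B_Q'$), or use that in the application $y$ is essentially a nearest point of $E_2$ to $x$, so $|x-y|\ll r$ and all the approximating points $z'$ already lie in $B(y,2r)$.
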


Let $R, j, i$ as above. Let $E_{Q_i^j} = E_{i,j}$ be the Ahlfors regular set obtained from the Main Lemma for $\wt\Tree(Q_i^j)$ and $d_{Q_i^j}$ be the function defined in \eqref{e:d_F}, where, in this instance, $\cF = \wt \Stop(Q_i^j)$ and $\cT = \wt \Tree(Q_i^j)$. Specifically, for $C_0>4$, as in \eqref{e:hatT}, we set
\begin{align*}
    \hat T_{i,j} := \ck{Q \in \cD \, |\, \ell(Q)= \ell(Q_i^j), \, Q \cap C_0 B_{Q_i^j} \neq \emptyset};
\end{align*}
following \eqref{e:cC}, we then put
\begin{align*}
    \cC_{i,j} :=\ck{ I \in \cI \, |\, I \cap \hat T \neq \emptyset \mbox{ and } I \mbox{ is maximal with } \ell(I) < \tau d_{Q_i^j}(I)}.
\end{align*}
Then
\begin{align}
 E_{i,j} := \bigcup_{I \in \cC_{i,j}} \partial_d I.
\end{align}
It follows from the Main Lemma that $E_{i,j}$ is Ahlfors $d$-regular. 

\begin{lemma}\label{l:EQ-UR}
Let $k_0, \tau>0, R, j$ and $i$ as above. Then $E_{i,j}$ is uniformly rectifiable.
\end{lemma}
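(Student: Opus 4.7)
The plan is to verify that $\cB^\sP_{E_{i,j}}(c_1p_1,\dots,c_Np_N)$ is a Carleson set on $E_{i,j}\times\R_+$; since $E_{i,j}$ is Ahlfors $d$-regular by the Main Lemma, hypothesis \eqref{e:sPmeta1} will then force $E_{i,j}$ to be UR. The structural input I will exploit is that every cube $Q\in \wt\Tree(Q_i^j)\setminus\wt\Stop(Q_i^j)$ satisfies $(\zeta_Q,\ell(Q))\in\cG^\sP_E$: otherwise the parent of $Q$ would have a bad child, and would therefore be in $\wt\Stop$ by maximality, forcing $Q\notin\wt\Tree$. I will split the Carleson integral according to the relative size of $r'$ and the Whitney scale $d_\cF(x')$, with $\cF=\wt\Stop(Q_i^j)$ and $d_\cF$ as in \eqref{e:d_F}.

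For the regime $C_3\,d_\cF(x')\le r'\le \ell(T)/C_1$ (with $C_3$ a large constant), I will use two-sided Hausdorff closeness of $E$ and $E_{i,j}$ to transfer goodness via continuity. Part (c) of the Main Lemma gives $\dist(x,E_{i,j})\lesssim \tau d_\cF(x)$ for $x\in E\cap C_0B_T$; the reverse estimate $\dist(y,E)\le \diam I\lesssim \tau d_\cF(y)$ for $y\in\partial_d I\subset E_{i,j}$ holds because each $I\in\cC$ meets $\hat T\subset E$. Choosing $\tau\ll \ve/(C_1C_2^2)$ and picking $x_B\in E$ with $|x_B-x'|\lesssim \tau d_\cF(x')$, set $B=B(x_B,(C_2+1)r')$; then $C_2B'\subset B$, $r_B\le C_1 r'$, and $d_{C_2B}(E,E_{i,j})<\ve$. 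Since $r_B\gg d_\cF(x_B)$, the Christ-David cube $Q$ with $x_B\in Q$ and $\ell(Q)\sim r_B$ is larger than any $\wt\Stop$ cube containing $x_B$ (whose size is at most $d_\cF(x_B)$), so $Q\in\wt\Tree\setminus\wt\Stop$ and thus $(\zeta_Q,\ell(Q))\in\cG^\sP_E$. The continuity hypothesis \eqref{e:sPmeta-cont} then places $(x',r')\in\cG^\sP_{E_{i,j}}(c_1p_1,\dots,c_Np_N)$, contributing nothing to the Carleson integral.

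The remaining scales are handled by direct packing. The range $r'>\ell(T)/C_1$ spans only a bounded logarithmic interval per point, contributing $O(\ell(T)^d)$. For $r'\le C_3 d_\cF(x')$, $B(x',r')$ meets only a bounded number of $I\in\cC$, each with $\ell(I)\sim d_\cF(x')$. When $\dist(x',\partial(\partial_d I))>r'$ for the face containing $x'$, $E_{i,j}\cap B(x',r')$ lies in a single affine $d$-plane; since that plane is UR, the converse (``guaranteed by UR'') half of \eqref{e:sPmeta1} combined with continuity yields a Carleson bound on the contribution of these scales. The residual bad points sit within distance $r'$ of the $(d-1)$-dimensional skeleton of some $I\in\cC$, a region of $\cH^d$-measure $\lesssim r'\ell(I)^{d-1}$; integrating $\dr$ over $r'\in(0,\ell(I))$ gives $\lesssim \ell(I)^d$ per Whitney cube, and Ahlfors regularity of $E_{i,j}$ yields $\sum_{I\cap 2B^*\ne\emptyset}\ell(I)^d\lesssim r_{B^*}^d$ for every ball $B^*$ centered on $E_{i,j}$, completing the Carleson bound. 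The main obstacle is the large-scale step: coordinating the constants $(\tau,\ve,C_1,C_2,C_3,\rho)$ so that the Hausdorff approximation holds uniformly on the dilated ball $C_2B$ while keeping the associated Christ-David cube $Q$ above $\wt\Stop$; the small-scale accounting is routine once one identifies the near-skeleton region as the only source of bad points.
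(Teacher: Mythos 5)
Your overall architecture is the same as the paper's: show that the bad set of $\sP$ for $E_{i,j}$ is Carleson by splitting scales against the Whitney function $d_{\cF}$, handle scales $\lesssim d_{\cF}$ and the top scales by the explicit skeleton structure and Ahlfors regularity, transfer goodness from the tree cubes of $E$ to $E_{i,j}$ at the intermediate scales via the two-sided closeness and continuity, and then invoke the ``guarantees UR'' hypothesis. However, the pivotal step of your intermediate regime contains a genuine error. You claim that any $\wt\Stop$ cube containing $x_B$ has size at most $d_{\cF}(x_B)$, so that the Christ--David cube $Q\ni x_B$ with $\ell(Q)\sim r_B\gg d_{\cF}(x_B)$ lies in $\wt\Tree\setminus\wt\Stop$ and is therefore good. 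The inequality goes the wrong way: if $x_B\in S_0\in\wt\Stop$, then by \eqref{e:d_F} one only gets $d_{\cF}(x_B)\le \ell(S_0)$, and since adjacent stopped cubes need not have comparable sizes, $d_{\cF}(x_B)$ can be tiny (because of a small stopped cube nearby) while $\ell(S_0)\gg r_B$. In that case your cube $Q$ is properly contained in the stopped cube $S_0$, hence is \emph{not} in $\wt\Tree$, and the construction gives no information about whether $(\zeta_Q,\ell(Q))\in\cG^{\sP}_E$; the continuity transfer then has no good reference pair. A separate instance of the same problem: $x_B$ may lie in $E\cap C_0B_T$ but outside $T=Q_i^j$ (the approximation covers $C_0B_T$, not just $T$), in which case there may be no tree cube containing $x_B$ at all. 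There is also the boundary case where your chosen cube is the root $Q_i^j$ itself, which is a child of a stopped cube of the previous generation and need not be good.

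The repair is exactly the paper's device: anchor not at a cube containing $x_B$, but at the \emph{minimizer} $P^*\in\wt\Stop(Q_i^j)$ of $d_{\cF}(x')$, which satisfies $\ell(P^*)+\dist(x',P^*)=d_{\cF}(x')\ll r'$ and hence $B_{P^*}\subset B(x',r')$ (this is Lemma \ref{l:P-in-ball}), and then pass to the ancestor $\wh P^*$ of $P^*$ with $\rho\ell(\wh P^*)\le 4C_2 r'<\ell(\wh P^*)$. By coherence of the tree, $\wh P^*\in\wt\Tree(Q_i^j)$, and since $r'\le\frac{\tau}{10}\ell(Q_i^j)$ forces $\wh P^*\subsetneq Q_i^j$, the stopping rule guarantees $(\zeta_{\wh P^*},\ell(\wh P^*))\in\cG^{\sP}_E$; continuity is then applied with reference ball $B_{\wh P^*}$ (not a ball centered at an arbitrary nearby point of $E$, which would also create a mismatch with Definition \ref{d:haus-cont}, where the good pair must be the center--radius pair of the reference ball). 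Your treatment of the small scales (near-skeleton measure estimate plus the single-plane case) is a harmless variant of the paper's split into $r\le\eta\tau d_{\cF}$ (bounded union of planes) and the bounded logarithmic band $\eta\tau d_{\cF}\le r\le\eta^{-1}d_{\cF}$, and the top-scale accounting is fine; but as written the intermediate-scale step would fail in the scenario above and needs the minimizer/ancestor argument to go through.
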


We want to use the fact that $\sP$ guarantees uniform rectifiability and that it is continuous. We will show that there exist constants 
\begin{align*}
    c_1,...,c_N
\end{align*}
such that the measure 
\begin{align}
    \chara_{\cB^{\sP}(c_1p_1,...,c_Np_N)}(x,r) \dt d\hd(x) 
\end{align}
is Carleson on $E_{i,j} \times \R_+$. We test this measure on a ball $B$ centered on $E_{i,j}$ and with radius $r_B$. Note that
\begin{align} \label{e:carleson1}
    \int_{B \cap E_{i,j}} \int_0^{\eta \tau d_{Q_i^j}(x)} \chara_{\cB^{\sP}(c_1p_1,...,c_Np_N)} (x,r)\, \dr d\hd(x) \lesssim_{n,d}\, r_B^d. 
\end{align}
holds automatically: indeed, for any $x \in E_{i,j}$ and whenever $0<r \leq \eta \tau d_{Q_i^j}(x)$, $B(x, r) \cap E_{i,j}$ is just a finite union of $d$-dimensional planes, and the number of planes in this union is bounded above by a universal constant only depending on $n$ and $d$. Therefore $B(x, r) \cap E_{i,j}$ is uniformly rectifiable and thus \eqref{e:carleson1} holds. 
Also, using the Ahlfors regularity of $E_{i,j}$, it is immediate to see that
\begin{align} \label{e:carleson2}
    \int_{B \cap E_{i,j}} \int_{\eta \tau d_{Q_i^j}(x)}^{\eta^{-1} d_{Q_i^j}(x)}
\chara_{\cB^{\sP}(c_1p_1,...,c_Np_N)} (x,r)\, \dr d\hd(x) \lec_{\tau, \eta} \, r_B^d. 
\end{align}

Let us check that
\begin{align}\label{e:carleson3}
\int_{B \cap E_{i,j}} \int_{\eta^{-1}d_{Q_i^j}(x)}^{\frac{\tau}{10} \ell(Q_i^j)}
\chara_{\cB^{\sP}(c_1p_1,...,c_Np_N)} (x,r)\, \dr d\hd(x) \lec_{\tau, \eta}  \, r_B^d. 
\end{align}

\begin{lemma}\label{l:P-in-ball}
Let $(x,r) \in E_{i,j} \times \R_+$ be such that 
\begin{align}\label{e:tree-assumption}
    \eta^{-1} d_{Q_i^j}(x) \leq r \leq \tau \ell(Q_i^j).
\end{align}
Then, for $\eta>0$ sufficiently small (depending only on $n$), there exists a cube $P$ in $\wt \Tree(Q_i^j)$ so that
\begin{align*}
 B_P  \subset B(x,r).
\end{align*}
\end{lemma}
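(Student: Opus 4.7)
My plan is to exhibit the desired $P$ directly as a cube in $\wt\Stop(Q_i^j)$ that nearly realizes the infimum defining $d_{Q_i^j}(x)$. Because every cube in play lives in the finite family $\cD(k_0)$, the collection $\wt\Stop(Q_i^j)$ is finite, so the infimum
\[
d_{Q_i^j}(x) \;=\; \inf_{S \in \wt\Stop(Q_i^j)} \bigl(\ell(S) + \dist(x,S)\bigr)
\]
is attained by some $P \in \wt\Stop(Q_i^j)$, and the assumption \eqref{e:tree-assumption} gives
\[
\ell(P) + \dist(x, P) \;=\; d_{Q_i^j}(x) \;\leq\; \eta r.
\]

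First, I would argue that $P$ lies in the tree: by construction $\wt\Stop(Q_i^j) \subseteq \wt\Tree(Q_i^j)$, so membership is immediate. (One could worry that $P = Q_i^j$, but the auxiliary bound $r \leq \tau \ell(Q_i^j)$ combined with $\ell(P) \leq \eta r$ forces $\ell(P) \leq \eta \tau \ell(Q_i^j) < \ell(Q_i^j)$, so $P$ is a proper descendant.) Second, I would verify the containment. From Theorem~\ref{t:Christ} the ball $B_P = B(\zeta_P, \ell(P))$ has center $\zeta_P \in P$ and radius $\ell(P)$, so
\[
|\zeta_P - x| \;\leq\; \dist(x, P) + \diam P \;\leq\; \eta r + 2\ell(P) \;\leq\; 3\eta r,
\]
whence for any $y \in B_P$,
\[
|y - x| \;\leq\; \ell(P) + |\zeta_P - x| \;\leq\; 4\eta r.
\]
Choosing $\eta < 1/4$ (the dependence on $n$ would enter only if one uses the exact Euclidean diameter $\sqrt{n}\,\ell(P)$ of a dyadic cube; here the Christ--David bound $\diam P \leq 2\ell(P)$ suffices) gives $B_P \subset B(x,r)$.

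I do not expect a real obstacle here. The argument is a short consequence of the definitions once one observes that the infimum defining $d_{Q_i^j}$ is attained on the finite stopping family $\wt\Stop(Q_i^j)$. The only mild subtlety is making sure $P$ is genuinely in $\wt\Tree(Q_i^j)$ rather than some other layer of the stopping-time construction, and this is precisely why the hypothesis $r \leq \tau \ell(Q_i^j)$ is included, forcing the chosen $P$ to sit strictly below the top cube of the tree.
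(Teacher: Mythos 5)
Your proof is correct and follows essentially the same route as the paper: both take the (attained) minimizer $P^*\in\wt\Stop(Q_i^j)\subseteq\wt\Tree(Q_i^j)$ of $d_{Q_i^j}(x)$ and deduce $B_{P^*}\subset B(x,r)$ from $\ell(P^*)+\dist(x,P^*)=d_{Q_i^j}(x)\leq\eta r$ for $\eta$ small. The only difference is cosmetic: the paper splits into two cases according to whether $\ell(P^*)$ or $\dist(x,P^*)$ dominates $d_{Q_i^j}(x)$, whereas your single triangle-inequality estimate handles both at once.
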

\begin{proof}
For this proof, we put $Q= Q_i^j$.
Let $I_x$ be the cube in $\cC_{Q}$ containing $x$,  so $\ell(I_{x})\sim \tau d_{Q}(x)$. Let $P^*$ be the minimiser of $d_{Q}(x)$.  Note that 
\begin{equation}
    \label{e:distp*}
\dist(x,P^*)\leq d_{Q}(x)\leq \eta r.
\end{equation}

Let us look at two distinct cases. 

\textbf{Case 1.} Suppose first that 
\begin{align}\label{e:tree-case1}
    d_{Q}(x) = \ell(P^*) + \dist(x, P^*) \leq 2 \ell(P^*).
\end{align}
Then we immediately obtain that
\begin{align*}
    \ell(P^*) \leq d_Q(x) \leq 2 \ell(P^*)
\end{align*}
and therefore that
\begin{align}\label{e:tree-case1-b}
    \ell(P^*)\sim \tau^{-1}\ell(I_x).  
\end{align}
But \eqref{e:tree-case1} also implies that \begin{align} \label{e:tree-case1-c}
    \dist(x, P^*) \leq \ell(P^*)
\end{align}
Now, because of the assumption \eqref{e:tree-assumption}, we see that (using also \eqref{e:tree-case1-b}) \begin{align*}
    r \geq \eta^{-1} d_Q(x) \geq \eta^{-1}d_Q(I_x) \sim \eta^{-1} \tau^{-1} \ell(I_x) \sim \eta^{-1} \ell(P^*),
\end{align*}
and so, because \eqref{e:tree-case1-c} and \eqref{e:distp*}, we have  for $\eta$ small $B_{P^*} \subset B(x,r)$.

\textbf{Case 2.} Suppose now that 
\begin{align*}
    d_Q(x) = \ell(P^*) + \dist(x, P^*) \leq 2 \dist(x, P^*). 
\end{align*}
Then we have
\begin{align*}
    \dist(x, P^*) \sim d_Q(x) \leq C \eta r.
\end{align*}
Also, by \eqref{e:tree-assumption}, it holds that 
\begin{align*}
    \ell(P^*) \leq d_Q(x) \leq \eta r.
\end{align*}
This implies, for $\eta>0$ sufficiently small,  that also in this case we have $B_{P^*} \subset B(x, r)$.
\end{proof}

\begin{lemma}
There exist constants $(c_1,...,c_N)$ such that the following holds. 
Let $(x,r) \in E_{i,j} \times \R_+$ be such that
\begin{align}\label{e:tre-assumption2}
    \eta^{-1} d_{Q_i^j}(x) \leq r \leq \tau \ell(Q_i^j).
\end{align}
Then
\begin{align*}
    (x,r) \in \cG_{E_{i,j}}^\sP(c_1p_1,...,c_Np_N). 
\end{align*}
\end{lemma}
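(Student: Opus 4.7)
The plan is to transfer $\sP$-goodness from $E$ to $E_{i,j}$ by invoking the continuity hypothesis of Definition~\ref{d:haus-cont}. I want to exhibit a $\sP$-good ball $B$ centered on $E$, Hausdorff close to $E_{i,j}$ on $C_2 B$, and containing $C_2 B'$ where $B' = B(x,r)$; continuity then immediately delivers $(x,r) \in \cG^\sP_{E_{i,j}}(c_1 p_1,\ldots,c_N p_N)$.

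First I would apply Lemma~\ref{l:P-in-ball} to produce $P^* \in \wt\Tree(Q_i^j)$ (the minimizer of $d_{Q_i^j}(x)$ used in that proof) satisfying $B_{P^*} \subset B(x,r)$, which gives $\ell(P^*) \lec \eta r$ and $|\zeta_{P^*} - x| \lec \eta r$. I would then let $P$ be the ancestor of $P^*$ at the smallest Christ--David scale with $\ell(P) \geq 2 C_2 r$; the hypothesis $C_1 > 4 C_2/\rho$ guarantees that such a scale lies in $[2 C_2 r, C_1 r]$. Together with $r \leq \tau \ell(Q_i^j)$ and $\tau$ sufficiently small this forces $P \subsetneq Q_i^j$. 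Since $P^* \in \wt\Stop$ is maximal with the stopping property, no proper ancestor of $P^*$ lies in $\wt\Stop$, so $P \in \wt\Tree(Q_i^j) \setminus \wt\Stop(Q_i^j)$; using $P \neq Q_i^j$ this yields $(\zeta_P, \ell(P)) \in \cG^\sP_E(p_1,\ldots,p_N)$.

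Next I would verify $d_{C_2 B_P}(E, E_{i,j}) < \ve$ using the Main Lemma. For $y \in E \cap C_2 B_P$, the triangle inequality and $P^* \subset P$ give $d_\cF(y) \leq \ell(P^*) + \dist(y, P^*) \lec \ell(P)$, so part (c) of the Main Lemma yields $\dist(y, E_{i,j}) \lec \tau \ell(P)$. Conversely, for $y \in E_{i,j} \cap C_2 B_P$ with $y \in \partial_d I$ for some $I \in \cC$, the cube $I$ meets a Christ--David cube $Q \subset E$ of scale $\ell(T)$, so $\dist(y, E) \leq \diam I \lec \tau d_\cF(y) \lec \tau \ell(P)$. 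Normalizing by $\diam(C_2 B_P) = 2 C_2 \ell(P)$ gives $d_{C_2 B_P}(E, E_{i,j}) \lec \tau$, which is below $\ve$ for $\tau$ small.

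Finally I would apply Definition~\ref{d:haus-cont} with $E_1 = E$, $E_2 = E_{i,j}$, $B = B_P$, $B' = B(x,r)$: $B$ is centered on $E$; $(\zeta_P, \ell(P)) \in \cG^\sP_E(p_1,\ldots,p_N)$; $d_{C_2 B_P}(E, E_{i,j}) < \ve$; $B'$ is centered on $E_{i,j}$; and $r \geq \ell(P)/C_1$ since $\ell(P) \leq C_1 r$. The hard part will be cleanly verifying the remaining containment $C_2 B' \subset B_P$, equivalently $|x - \zeta_P| + C_2 r \leq \ell(P)$, because a priori $|\zeta_{P^*} - \zeta_P|$ can approach $\ell(P)$ in the worst case. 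The slack provided by the hypothesis $C_1 > 4 C_2/\rho$ in the scale selection, combined with the smallness of $\ell(P^*) \lec \eta r$ relative to $\ell(P) \geq 2 C_2 r$, is precisely what is needed to absorb all intervening constants; alternatively one can replace $B_P$ by a centered ball $B(\zeta_{P^*}, R)$ and deduce its goodness via the monotonicity of Remark~\ref{rem:stability} from the Christ--David ancestor ball. Once the containment is secured, continuity yields $(x,r) \in \cG^\sP_{E_{i,j}}(c_1 p_1,\ldots,c_N p_N)$ as required.
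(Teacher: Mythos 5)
Your proposal follows the paper's proof essentially step for step: use Lemma \ref{l:P-in-ball} to produce $P^*\in\wt\Stop(Q_i^j)$ with $B_{P^*}\subset B(x,r)$, pass to an ancestor at scale comparable to $C_2r$ (the paper takes $\wh P^*$ with $\rho\ell(\wh P^*)\leq 4C_2r<\ell(\wh P^*)$, so that $C_1>4C_2/\rho$ gives $r\geq \ell(\wh P^*)/C_1$), observe that this ancestor is a proper subcube of $Q_i^j$ lying in $\wt\Tree(Q_i^j)$ and hence $(\zeta_{\wh P^*},\ell(\wh P^*))\in\cG^\sP_E(p_1,\dots,p_N)$, verify $d_{C_2B_{\wh P^*}}(E,E_{i,j})\lec\tau$ from \eqref{e:adr-corona} and the Whitney property of $\cC_{i,j}$, and conclude by $(\ve,C_1,C_2)$-continuity with $E_1=E$, $E_2=E_{i,j}$, $B=B_{\wh P^*}$, $B'=B(x,r)$. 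Two remarks on the points where you deviate or hesitate. First, your justification that $P$ is good is slightly off target: $P\notin\wt\Stop(Q_i^j)$ tells you $P$ has no child in $\cB^\sP$, not that $P$ itself is good; goodness of $P$ comes from looking at its \emph{parent}, which (since $P\neq Q_i^j$ and $P$ is not properly contained in a stopped cube) is an unstopped tree cube and therefore has no child in $\cB^\sP$, so in particular $P$ is good. Second, the containment $C_2B(x,r)\subset B_{\wh P^*}$ that you flag as ``the hard part'' is exactly the step the paper dispatches in one line from the choice \eqref{e:r-ell}, using that $\dist(x,P^*)\leq \eta r$ with $P^*\subseteq\wh P^*$. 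Your observation that $|\zeta_{P^*}-\zeta_P|$ can be of order $\ell(P)$ is a fair one if one insists on the ball centered at $\zeta_P$, and your first suggested remedy (more slack in the threshold) does not by itself cure it, since enlarging $\ell(P)$ enlarges the worst-case offset proportionally; your second remedy, replacing $B_P$ by a ball centered at $\zeta_{P^*}$ of radius comparable to $\ell(P)$ whose goodness is imported from the ancestor via Remark \ref{rem:stability} (at the cost of composing the constants $c_1,\dots,c_N$ once more), is a correct way to close this and is no real departure from the paper's argument. So the approach is the right one and matches the paper; just carry out the containment (or the recentering) explicitly rather than leaving it conditional.
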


\begin{proof}
We know from Lemma \ref{l:P-in-ball} that if $(x,r)$ satisfies \eqref{e:tre-assumption2}, then there exists a cube $P^* \in \wt \Stop(Q_i^j)$ such that $B_{P^*} \subset B(x,r)$. Thus, there must exist an ancestor $\widehat P^* \in \wt \Tree(Q_i^j)$ of $P^*$ so that 
\begin{align} \label{e:r-ell}
     \rho\ell(\widehat P^*)  \leq 4C_2 r <  \ell(\widehat P^*),
\end{align}
and thus so that $B(x,C_2 r) \subset B_{\wh P^*}$, and since $C_1>4C_2/\rho$, we also have $r\geq \ell(\wh P^{*})/C_1$. But recall that if $\widehat P^* \in \wt \Tree(Q_i^j)$, then we must have, by definition, that $(\zeta_{\widehat P^*}, \ell(\widehat P^*)) \in \cG^\sP(p_1,...,p_N)$.


Let us check that
\begin{align*}
    d_{C_2 B_{\wh P^*}} (E_{i,j}, E) < \tau.
\end{align*}

  By \eqref{e:adr-corona}, if $y \in E \cap C_2 B_{\wh P^*}$
\[
\dist(y,E_{i,j})\lec \tau d_{Q_{i,j}}(y)\leq \tau (\dist(y,\wh P^*)+\ell(\wh P^*))\lec C_2\tau \ell(\wh P^*).
\]


That for any $x \in E_{i,j} \cap C_2B_{\wh P^*}$ we have $\dist(x,E) \lec \tau \ell(\wh P^*)$ follows in the same way, since any such $x$ is contained in a dyadic cube $I$ touching $E$ so that 
\[
\ell(I)<\tau d_{Q_i^j}(I) 
\stackrel{\eqref{e:tre-assumption2}}{\leq} \eta \tau r  
\stackrel{\eqref{e:r-ell}}{\leq} 8\eta\tau \ell(\wh P^*).\]

Choosing $\tau$ in the construction of $\cC_{i,j}$ appropriately (depending on $\ve$ and $C_{2}$),  the lemma follows from the $(\ve, C_1,C_2)$-continuity of $\sP$.
\end{proof}

\begin{proof}[Proof of Lemma \ref{l:EQ-UR}] 
 We have shown that there exist constants $c_1,...,c_N$ such that, for any pair $(x,r) \in E_{i,j} \times \R_+$ with 
 \begin{align*}
     \eta^{-1} d_{Q_i^j}(x) \leq r \leq \tau \ell(Q_i^j)
 \end{align*}
 we have
\begin{align}
    (x, r) \in \cG^{\sP}_{E_{i,j}} (c_1p_1,...,c_Np_N). 
\end{align}
Thus the integral in \eqref{e:carleson3} equals to zero. 
Now, we also see that, trivially
\begin{align}
    \int_{B \cap E_{i,j}} \int_{\frac{\tau}{10} \ell(Q_i^j)}^{\diam(E_{i,j})}
\chara_{\cB^{\sP}(c_1p_1,...,c_Np_N)} (x,r)\, \dr d\hd(x) \lec_{\tau}\, r_B^d.
\end{align}
This together with the previous estimates  \eqref{e:carleson1}, \eqref{e:carleson2} and \eqref{e:carleson3} proves that the measure $\chara_{\cB^\sP(c_1p_1,...,c_Np_N)}(x,r) \dr d \hd|_{E_{i,j}}(x)$ is a Carleson measure on $E_{i,j} \times \R_+$; then, because $\sP$ guarantees uniform rectifiability with the appropriate parameters and it is $(\ve, C_1,C_2)$-continuous, $E_{i,j}$ is uniformly rectifiable. Note that all the constants involved depend only on $n,d, \tau, \eta$ (and $c_0$); in particular, they are all independent of $Q_i^j$, $R$ and $k_0$.
\end{proof}

\begin{proof}[Proof of Lemma \ref{l:Tree-bound}]
We want to apply Lemma \ref{l:AS-beta} with $E_1=E$, $E_2= E_{i,j}$ and $p=2$. For $Q \in \wt \Tree(Q_i^j)$, recall that $\zeta_Q$ denotes the center of $Q$. By \eqref{e:adr-corona}, we know that $\dist(z_Q, E_{i,j}) \lesssim \tau d_{Q_i^j} (z_Q) \leq  \tau \ell(Q)$, and in particular, if we denote by $x_Q'$ the point in $E_{i,j}$ which is closest to $x_Q$, we see that $B_Q:=B(\zeta_Q, \ell(Q)) \subset B(x_Q', 2\ell(Q))=:B_Q'$ for $\tau$ small enough. Hence for each cube $Q \in \Tree(Q_i^j)$ the hypotheses of Lemma \ref{l:AS-beta} are satisfied and we may write 
\begin{align*}
   & \sum_{Q \in \wt \Tree(Q_i^j)} \beta_{E}^{2,d}(3 B_Q)^2 \ell(Q)^d \lesssim \sum_{Q \in \wt \Tree(Q_i^j)} \beta_{E_{i,j}}^{2,d} (6 B_Q')^2 \ell(Q)^d \\
    &  \enskip \enskip + \sum_{Q \in \wt \Tree(Q_i^j)} \ps{ \frac{1}{\ell(Q)^d} \int_{6 B_Q \cap E} \ps{\frac{\dist(x, E_{i,j})}{\ell(Q)}}^2 \, d\hdc(x) } := I_1 + I_2. 
\end{align*}
We first look at $I_1$. We apply Theorem \ref{theorem:Christ_cubes} to $E_{i,j}$; let us denote the cubes so obtained by $\cD_{E_{i,j}}$. Note that for each $P \in \wt \Tree(Q^j_i)$ with $P \in \cD(k_0)$, $x_P'$ belongs to some cube $ P' \in \cD_{E_{i,j}}$ so that $\ell(P') \sim \ell(P)$; hence there exists a constant $C_1 \geq 1$ so that 
\begin{align}
    6B'_P \subset C_1B_{P'}.
\end{align}
This in turn implies that $\beta_{E_{i,j}}^{p,d}(6 B'_P) \lesssim_{p,n,d,C_1} \beta_{E_{i,j}}^{p,d}(C_1 B_{P'})$. Hence, 
\begin{align}
    \sum_{\substack{ P \in \Tree(Q_i^j) \\ P \in \cD(k_0)}} \beta_{E_{i,j}}^{2,d} (6 B'_P)^2 \ell(P)^d \lesssim_{p,n,d, C_1} \sum_{\substack{P' \in \cD_{E_{i,j}} \\ \ell(P') \lesssim \ell(Q_i^j) }} \beta_{E_{i,j}}^{2,d} (C_1 B_{P'})^2 \ell(P')^d. 
\end{align}
Since $E_{i,j}$ is uniformly rectifiable, we immediately have that $I_1 \lesssim \ell(Q_i^j)^d$ by the main results of \cite{DS} (in particular, see (C3) and (C6) in \cite[Chapter 1]{DS}).

Let us now worry about $I_2$. We put
\begin{align} 
     \Approx(Q_i^j) 
     := \Big\{ \mbox{ maximal } S \in \cD(k_0) \, |\, & \mbox{there is } I \in \cC_{i,j} \mbox{ s.t. } I \cap S \neq \emptyset \nonumber\\
   &  \mbox{ and } \rho \ell(S) \leq  \ell(I) \leq \ell(S)\Big\}. \label{e:Approx}
\end{align}
It is clear that 
\begin{align}\label{e:approx-cont}
    Q_i^j \subset \bigcup_{S \in \Approx(Q_i^j)} S, 
\end{align}
since $\cC_{i,j}$ covers $Q_i^j$. Now let $x \in Q_i^j$. We claim that there exists a cube $S \in \Approx(Q_i^j)$ so that 
\begin{align}\label{e:Approx-dist}
    \dist(x, E_{i,j}) \leq C \ell(S).
\end{align}
By \eqref{e:approx-cont}, we see that if $x \in Q_i^j$, then there exists an $S \in \Approx(Q_i^j)$ so that $x \in S$. But, then, by definition, there exists an $I \in \cC_{i,j}$ such that $\ell(I) \leq \ell(S)$ and $I \cap S \neq \emptyset$. Thus 
\[
\dist(x,E_{i,j})\leq \diam I + \dist(x, I) \lec  \ell(S).\]
We now estimate $I_2$ as follows: first, 
\begin{align}
   \frac{1}{\ell(Q)^d} \int_{6 B_Q \cap E} \ps{\frac{\dist(x, E_{i,j})}{\ell(Q)}}^2 \, d\hdc(x) & \lesssimt{\eqref{e:Approx-dist}} \hspace{-10pt}   \sum_{ \substack{S \in \Approx(Q_i^j) \\ S \cap 6 B_Q \neq \emptyset}} \int_S \frac{\ell(S)^2}{\ell(Q)^{d+2}} \, d\hdc \nonumber \\
    & \lesssim \sum_{ \substack{S \in \Approx(Q_i^j) \\ S \cap 6 B_Q \neq \emptyset}}\frac{\ell(S)^{2+d}}{\ell(Q)^{2+d}}. \label{e:beta-est5}
\end{align}
Hence we obtain that 
\begin{align} \label{e:beta-est1}
    & I_2 \lesssimt{\eqref{e:beta-est5}} \sum_{Q \in \wt  \Tree(Q_i^j)} \sum_{ \substack{S \in \Approx(Q_i^j) \\ S \cap 6 B_Q \neq \emptyset}}\frac{\ell(S)^{d+2}}{\ell(Q)^{2}} \nonumber \\
    & \lesssim \sum_{\substack{S \in \Approx{Q_i^j} \\ S \cap 6 B_{Q_i^j}}}\ell(S)^{d+2} \sum_{\substack{Q \in \wt \Tree(Q_i^j) \\ S \cap 6 B_Q \neq \emptyset } } \frac{1}{\ell(Q)^{2}}.
\end{align}
Note that the number of cubes $Q \in \wt \Tree(Q_i^j)$ which belong to a given generation and such that $S \cap 6 B_Q \neq \emptyset$ is bounded above by a constant $C$ which depends on $n$. Indeed, if $S \cap 6B_Q \neq \emptyset $, then we must have that $\dist(Q, S) \leq 6\ell(Q)$. Moreover, because $S \in \Approx(Q_i^j)$ and using Lemma \ref{lemma:SS_prop2}, we see that, if $I \in \cC_{i,j}$ is so that $I \cap S \neq \emptyset$ and $\ell(S) \sim\ell(I)$ (as in \eqref{e:Approx}), 
\[
    \ell(S) \sim  \ell(I) \sim{\tau} d_{Q_i^j}(I) \lesssim 
\tau(    \ell(Q) + \dist(I,Q)) \leq \tau \ell(Q)+\tau(6\ell(Q)+2\ell(S))
\]
so for $\tau$ small enough, $\ell(S)\lec \ell(Q)$. Thus we can sum the interior sum in \eqref{e:beta-est1}:
\begin{align*}
    \sum_{\substack{ Q \in \Tree(Q_i^j) \\ S \cap 6B_Q \neq \emptyset }} \frac{1}{\ell(Q)^{2}} \lesssim_{\tau} \frac{1}{\ell(S)^{2}}.
\end{align*}
Finally, we see that 
\begin{align} \label{e:beta-est2}
    I_2 \lesssim_{\tau, n} \sum_{ \substack{S \in \Approx(Q_i^j) \\ S \cap 6 B_{Q_i^j}}} \frac{\ell(S)^{d+2}}{\ell(S)^{2}} = \sum_{ \substack{S \in \Approx(Q_i^j) \\ S \cap 6 B_{Q_i^j}}} \ell(S)^d.
\end{align}
Now, by definition of $\Approx(Q_i^j)$, the last sum in  \eqref{e:beta-est2} is bounded above by a constant times
\begin{align*}
    \sum_{I \in \cC_{i,j}} \ell(I^d) \lec \hd \ps{\bigcup_{I \in \cC_{i,j}} \partial_d I} = \hd(E_{i,j}) \lesssim \ell(Q_i^j)^d,
\end{align*}
where we also used the Ahlfors regularity of $E_{i,j}$. 
This proves \eqref{e:beta-bound-tree}.
\end{proof}

\begin{proof}[Proof of Lemma \ref{l:meta}]
Let $Q_0 \in \cD$ as in the statement of the Lemma. Then we see that
\begin{align} \label{e:beta-est3}
    & \sum_{R \in \Top(k_0)}\sum_{j=0}^{K_R} \sum_{Q \in \Next_j(R)} \sum_{P \in \wt \Tree(Q)} \beta_E^{2, d}(3 B_P)^2 \ell(P)^d \nonumber \\
    &  \stackrel{\eqref{e:beta-bound-tree}}{\lec_{\tau}} \sum_{R \in \Top(k_0)} \sum_{j=0}^{K_R} \sum_{Q \in \Next_j(R)} \ell(Q)^d.
\end{align}
Note that for $1 \leq j \leq K_R$, if $Q \in \Next_j(R)$, then there is a sibling $Q'$ of $Q$ so that $(\zeta_{Q'}, \ell(Q')) \in \cB^{\sP}$. Also recall that we put $\Next_0(R)= \{R\}$. Then any cube appearing in the sum \eqref{e:beta-est3}, either belongs to $\Top(k_0)$ (whenever it belongs to $\Next_0(R)$), or is adjacent to a cube in $\cB^\sP(Q_0,p_1,...,p_N)$, as defined in \eqref{e:cube-B-G}. Thus we see that 
\[
    \eqref{e:beta-est3}  \lec_{\tau} \ps{ \sum_{R \in \Top(k_0)} \ell(R)^d + \sP(Q_0, p_1,...,p_N)} 
     \stackrel{\eqref{e:ADR-packing}}{\lec_{\tau}} \, \hd(Q_0) + \sP(Q_0).
\]
Note that all these estimates were independent of $k_0$. Sending $k_0$ to infinity and recalling \eqref{e:allcomparable} (and recalling that $\ell(Q_{0})^{d}\lec_{c} \cH^{d}(Q_0)$) gives the estimate \eqref{e:meta-beta-est}.

\end{proof}

\section{Applications: The dimensionless quantities LS and LCV}
\label{s:LS}

Here we give a proof of Theorem \ref{t:LS}

\begin{proof}
First, it is not hard to show that there is $c>0$ so that if $Q\in \good{BWGL}_{E}(Q_{0},c\ve)$, then for any children $Q'$ of $Q$, since 
\[\ell(Q')=\rho \ell(Q)<\frac{1}{4} \ell(Q),\] 
we have $Q'\in \good{LS}_{E}(\ve)$. Using this fact, we get
\[
\cH^{d}(R)+{\rm LS}(R,\ve)
\leq \cH^{d}(R)+{\rm BWGL}(R,c\ve)
\lec \beta_{E}(R)\]
and so we just need to prove the reverse inequality.

First we show that for all $C>1$ and $\ve>0$ is small depending on $C$ and $B\in \good{LS}_{E}(\ve)$ and $E'$ is another lower $d$-regular set so that $d_{4B}(E,E')<\ve$, then any ball $B'$ with $4B' \subseteq B$ centered on $E'$ with $r_{B'}\geq r_{B}/C$, we have that $B'\in \good{LS}_{E'}(c\ve)$ for some $c>0$, and so LS is $(\ve,C,4)$-continuous for all $C>1$ and $\ve>0$ sufficiently small depending on $C$.

Let $x',y'\in E'\cap  B'$, then there are $x,y\in E$ with $|x-x'|,|y-y'|<4\ve r_{B}$. For $\ve>0$ small depending on $C$, since $r_{B'}\geq r_{B}/C$,  $x,y\in \frac{3}{2}B'$, and so $2x-y\in 3B'\subseteq B$. Since $B\in \cG^{{\rm LS}}(\ve)$, there is $\xi\in E$ so that $|2x-y-\xi|<\ve r_{B}$. For $\ve>0$ small enough, since $2x-y\in \frac{3}{2}B'$, $\xi\in 4B'\subseteq B$, thus there is $\xi'\in E'$ with $|\xi-\xi'|<4\ve r_{B}$. Thus,
\[
\dist(2x'-y',E')
\leq |2x'-y'-\xi'|
\leq |2x-y-\xi|+|x-x'|+|y-y'|+|\xi-\xi'|
<16\ve r_{B}.
\]

Hence, $B'\in \cG^{{\rm LS}}_{E'}(16\ve)$. Thus, for $\ve>0$ small enough, Lemma \ref{l:meta} implies the second half of \eqref{e:LS-ineq}. This completes the proof. 

\end{proof}

Another dimensionless quantity is the LCV. This can be proven in much the same way, so we omit the proof.

\begin{theorem} Let $E\subseteq \R^{n}$ be a lower $d$-regular set and $\cD$ its Christ-David cubes. Then for $\ve>0$ small enough, and $R\in \cD$,
\begin{equation}
    \label{e:LS-ineq}
\beta_{E}(R) \sim \cH^{d}(R)+{\rm LCV}(R,\ve).
\end{equation}
\end{theorem}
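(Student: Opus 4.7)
The plan is to mirror the strategy used for Theorem \ref{t:LS}, replacing the symmetry quantifier $\dist(2y-z,E)$ by the convexity quantifier $\dist((y+z)/2,E)$; both bounds will reduce to elementary triangle inequality arguments. The two directions require (a) showing that good $\mathrm{BWGL}$ cubes are good $\mathrm{LCV}$ cubes (at the children), and (b) verifying that $\mathrm{LCV}$ is $(\ve,C,4)$-continuous in the sense of Definition \ref{d:haus-cont}, so that Lemma \ref{l:meta} applies.

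For the bound $\cH^{d}(R)+\mathrm{LCV}(R,\ve)\lec \beta_{E}(R)$, I would first show that there is $c>0$ so that $Q\in \good{BWGL}_{E}(c\ve,C_{0})$ implies every child $Q'$ of $Q$ satisfies $B_{Q'}\in \good{LCV}_{E}(\ve)$. Indeed, if $d_{C_{0}B_{Q}}(E,P)<c\ve$ for some $d$-plane $P$, then for any $y,z\in B_{Q'}\cap E$ (noting $B_{Q'}\subseteq C_{0}B_{Q}$ since $\ell(Q')=\rho\ell(Q)$), the midpoint $(y+z)/2$ lies within $c\ve\,\ell(Q)$ of $P$ by convexity of $P$, hence within $Cc\ve\, \ell(Q')$ of $E$. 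Picking $c$ small enough yields $Q'\in \good{LCV}_{E}(\ve)$, and then Theorem \ref{t:AS} gives
\[
\cH^{d}(R)+\mathrm{LCV}(R,\ve)\leq \cH^{d}(R)+\mathrm{BWGL}(R,c\ve,C_{0})\lec \beta_{E}(R).
\]

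For the reverse inequality, I would verify the continuity hypothesis of Lemma \ref{l:meta}. Let $B\in \good{LCV}_{E}(\ve)$, and suppose $E'$ is lower $d$-regular with $d_{4B}(E,E')<\ve$; let $B'$ be a ball centered on $E'$ with $4B'\subseteq B$ and $r_{B'}\geq r_{B}/C$. Given $y',z'\in E'\cap B'$, pick $y,z\in E$ with $|y-y'|, |z-z'|<4\ve r_{B}$. For $\ve$ small (depending on $C$) one has $y,z\in \tfrac{3}{2}B'\subseteq B$, so their midpoint $(y+z)/2\in B$, and since $B\in \good{LCV}_{E}(\ve)$ there exists $\xi\in E$ with $|\xi-(y+z)/2|<\ve r_{B}$. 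For $\ve$ small this $\xi$ lies in $4B'\subseteq B$, so there is $\xi'\in E'$ with $|\xi-\xi'|<4\ve r_{B}$. The triangle inequality then yields
\[
\dist\!\left(\tfrac{y'+z'}{2},E'\right)\leq \left|\tfrac{y'+z'}{2}-\xi'\right|\lec \ve\, r_{B}\lec C\ve\, r_{B'},
\]
so $B'\in \good{LCV}_{E'}(c'\ve)$ for some $c'=c'(C)$. Finally, since $\mathrm{LCV}$ guarantees (and is guaranteed by) uniform rectifiability for Ahlfors regular sets by David--Semmes (\cite{of-and-on}), Lemma \ref{l:meta} applies and produces $\beta_{E}(R)\lec \cH^{d}(R)+\mathrm{LCV}(R,\ve)$.

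The step I expect to require the most care is the continuity verification: specifically, confirming that the quantitative dependence $B'\in\good{LCV}_{E'}(c'\ve)$ involves a constant $c'$ depending only on $C$, as Lemma \ref{l:meta} demands uniform parameters. Fortunately, because midpoint is a linear operation, the Hausdorff approximation transfers all three points $y,z,(y+z)/2$ between $E$ and $E'$ with a loss of only $4\ve r_{B}$ at each step, so the triangle inequality closes cleanly without invoking any regularity of $E'$. Everything else is a mechanical transcription of the $\mathrm{LS}$ proof.
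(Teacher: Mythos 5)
Your proposal is correct and follows exactly the route the paper intends: the paper omits the LCV proof precisely because it is the LS argument with $2y-z$ replaced by the midpoint $(y+z)/2$, which is what you carry out (good BWGL cubes have good LCV children via convexity of planes, plus the $(\ve,C,4)$-continuity check feeding into Lemma \ref{l:meta} together with the David--Semmes fact that LCV guarantees and is guaranteed by UR). The continuity verification you give matches the paper's LS computation step for step, with the constant $c'$ depending only on $C$ as Definition \ref{d:haus-cont} permits.
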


\section{Application: the BAUP}
\label{s:BAUP}
In this section, we show that we can apply Lemma \ref{l:meta} to the quantitative property BAUP (recall the definition \eqref{e:BAUP-def}). Namely, we will show that BAUP is $(\ve, C_1,C_2)$-continuous. That  BAUP guarantees rectifiability is due to David and Semmes, see \cite{of-and-on}, Proposition 3.18.

Let $\epsilon_0>0$ and $C_0\geq 1$ be given. Let us first define the actual partition that BAUP determines. We put 
\begin{align*}
&
\begin{aligned}[t]
\cG^{\text{BAUP}}(\ve_0, C_0)= \good{BAUP} :=\Bigg\{
      (x,r) \in E \times R_+ \, |&\, \mbox{there is a family } \dF \mbox{ of } d\mbox{-planes} \\
     & \mbox{ s.t. } d_{B(x,C_0 r)} (E, \cup_{P \in \dF} P) < \epsilon_0 \Bigg\}
     \end{aligned}
     \\
    & \cB^{\text{BAUP}}(\ve_0, C_0)  = \bad{BAUP} := E \times \R_+ \setminus \cG^{\text{BAUP}}.
\end{align*}
\begin{lemma}
Let $\ve_0>0$, $C_0\geq 1$, and consider the quantitative property BAUP with parameters $(\ve_0, C_0)$. If $C_{1}\geq 1$, $C_2> 2C_{0}$, $\ve_0$ is small enough (depending on $C_2$ and $C_1$), and $0<\ve_{1}\leq \ve_0$ then BAUP is $(\ve_1, C_1, C_2)$-continuous.
\begin{proof}
Let us consider two subsets $E_1, E_2$ or $\R^n$.
From Definition \ref{d:haus-cont}, we take a ball $B=B(x_B, r_B)$ centered on $E_2$ and so that, first, 
\[
(x_B, r_B) \in \good{BAUP}_{E_1}(\ve_0, C_0),
\]
and second, 
\begin{align} \label{e:E1-E2-dist}
    d_{C_2 B}(E_1,E_2) < \ve_1,
\end{align}
where $C_2$ and $\ve_1\leq \ve_0$ will be determined later with respect to $C_0$ and $\ve_0$. Thus, there is a union of $d$-dimensional planes $\dF$ so that 
\[
d_{C_{0}B}(E_{1},\dF)<\epsilon_{0}.
\]

Next, we consider a ball $B'=B(x_B', r_B')$ centered this time on $E_2$ with $C_2B'\subseteq B$ and so that $r_B' \geq \frac{r_B}{C_1}$. We want to show that for any such a ball $B'$,
\begin{align} \label{e:E2-F-dist}
    d_{ C_0 B' }(E_2, \dF) < c_1 \ve_0 r_B'.
\end{align}
for some constant $c_1$ to be determined.
Let  $y \in E_2 \cap C_0 B'$. Since $2B'\subseteq C_2B' \subset B$, we have $2C_0 B'\subseteq C_0B\subseteq C_2 B$, so we can use \eqref{e:E1-E2-dist} to find an $x \in E_1$ so that $|x-y| < \ve_1 C_2 r_B$. Since $\ve_0\leq 1 $, $x\in E_{1}\cap 2C_0B'\subseteq C_0B$, and because $(x_B, r_B) \in \good{BAUP}_{E_1}(\ve_0, C_0)$, it holds that $\dist(x, \dF) < \epsilon_0 C_0 r_B$. Now, because $\ve_1 \leq \ve_0$, we have that
\begin{align*}
    \sup_{y \in E_2 \cap C_0 B'} \dist(y, \dF) \leq \ve_1 C_2 r_B + \ve_0 C_0 r_B \leq \ps{2 C_2 C_1} \ve_0 r_B'
\end{align*}

Next, for $q \in \dF \cap C_0 B'$, we look at $\dist(q, E_2)$; note in particular that $q  \in \dF \cap C_0 B$  and thus, because $d_{C_0B} (E_1, \dF) < \ve_0$, there is an $x \in E_1$ with $|x-q| \leq \ve_0 C_0 r_B$. Moreover, choosing $C_2>2C_0$, since $\ve_{0}\leq 1$, we also have that $x \in 2C_{0}B \subseteq C_2 B$, and thus $\dist(x, E_2) < C_2 \ve_1 r_B$. All in all, we obtain that
\begin{align*}
    \sup_{q \in E_2 \cap C_0 B'} \dist(q, E_2)&  \leq |x-q| + \dist(x, E_2) \\
    & \leq C_0 \ve_0 r_B + C_2 \ve_1 r_B \\
    & \leq \ps{ 2 C_2 C_1} \ve_0 r_B'.
\end{align*}
This implies \eqref{e:E2-F-dist} with  $c_1= 2C_1C_2$; thus BAUP is $(\ve_1, C_1, C_2)$-continuous, whenever $\epsilon_1\leq \ve_0$, and $C_2$ is sufficiently large, with respect to the parameter $C_0$.
\end{proof}
\end{lemma}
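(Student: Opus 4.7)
The plan is to reuse the same union of $d$-planes $\dF$ that witnesses BAUP for $E_1$ at scale $C_0 B$ as the approximating family for $E_2$ at scale $C_0 B'$. That is, I would directly bound the symmetric Hausdorff distance $d_{C_0 B'}(E_2, \bigcup_{P \in \dF} P)$ by chaining the two given approximations $d_{C_0 B}(E_1, \bigcup \dF) < \ve_0$ and $d_{C_2 B}(E_1, E_2) < \ve_1$.

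For the half $\sup_{y \in E_2 \cap C_0 B'} \dist(y, \bigcup \dF)$: given such a $y$, first use the Hausdorff closeness of $E_1$ and $E_2$ inside $C_2 B$ to produce $x \in E_1$ with $|x - y| < \ve_1 C_2 r_B$, and then apply the BAUP hypothesis for $E_1$ to pick $p \in \bigcup \dF$ with $|x - p| < \ve_0 C_0 r_B$. Adding these and using $r_B \leq C_1 r_{B'}$ together with $\ve_1 \leq \ve_0$ gives a bound of order $C_1 (C_2 + C_0) \ve_0 r_{B'}$. The other half, $\sup_{q \in \bigcup \dF \cap C_0 B'} \dist(q, E_2)$, is entirely symmetric: BAUP first provides $x' \in E_1$ near $q$, then the Hausdorff approximation provides a point of $E_2$ near $x'$.

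The main technical nuisance, which is where the hypothesis $C_2 > 2 C_0$ and the smallness of $\ve_0$ enter, is checking that every intermediate point in the chain lies inside the ball where the hypothesis being invoked is active. The key observation is that $C_2 B' \subseteq B$ combined with $C_2 > 2 C_0$ yields $2 C_0 B' \subseteq C_0 B \subseteq C_2 B$, so that perturbations of size $O(\ve_0 r_B)$ applied to a point of $C_0 B'$ still land in $C_0 B$, provided $\ve_0$ is small in comparison with $1$ (the precise smallness depending on $C_1$ and on $C_2/C_0 - 2$). Once this is verified, both halves fit within a combined bound of the form $c_1 \ve_0 r_{B'}$ with $c_1 \sim C_1 C_2$, and this bound places $(x_{B'}, r_{B'}) \in \cG^{\mathrm{BAUP}}_{E_2}(c_1 \ve_0, C_0)$, which is exactly the $(\ve_1, C_1, C_2)$-continuity condition from Definition \ref{d:haus-cont}. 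Conceptually the whole argument is two triangle inequalities; the work is entirely in the ball bookkeeping and in tracking how $c_1$ depends on $C_1, C_2$.
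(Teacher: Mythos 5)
Your proposal is correct and follows essentially the same route as the paper's own proof: reuse the witnessing family $\dF$ from the BAUP condition for $E_1$ on $C_0B$, bound each half of $d_{C_0B'}(E_2,\bigcup\dF)$ by a triangle inequality chaining $d_{C_2B}(E_1,E_2)<\ve_1$ with $d_{C_0B}(E_1,\dF)<\ve_0$, and use $C_2>2C_0$ together with $C_2B'\subseteq B$ and the smallness of $\ve_0$ to keep all intermediate points inside the relevant balls, ending with a constant $c_1\sim C_1C_2$ exactly as in the paper (which takes $c_1=2C_1C_2$).
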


We can now prove Theorem \ref{t:baup}. Firstly, note that we immediately have 
\[
{\rm BAUP}(Q_0,C_0,\ve)\leq  
{\rm BWGL}(Q_0,C_0,\ve) \lec \beta_{E}(Q_0).
\]
Furthermore, since ${\rm BAUP}(C_{0},\ve)$ guarantees and is guaranteed by UR for all $\ve>0$ sufficiently small depending on $C_0$ by \cite[Theorem III.3.18]{of-and-on}. Since it is also $(\ve,C_{1},C_{2})$-continuous for $C_{2}>2C_{0}$ and all $C_{1}\geq 1$ and $\ve>0$ sufficiently small, we have, for  all $C_{0}\geq 1$ and $\ve$ small enough (depending on $C_{0}$)
\[
\beta_{E}(Q_0) \lec \cH^{d}(Q_{0})+ {\rm BAUP}(Q_0,C_0,\ve).
\]
%
%

\section{Application: the GWEC}
Let us give one last example of quantitative property which can be handled within the framework of Lemma \ref{l:meta}. 
For a parameter $\ve_0 >0$, we put in $\bad{GWEC}$ all the pairs $(x, r) \in E \times R_+$ for which there exists an $(n-d-1)$-dimensional sphere $S$ satisfying the following three conditions.
\begin{align}
    & S \subset B(x, r) \mbox{ and } \dist(S, E) > \epsilon_0 r; \label{e:gwec1} \\
    & S \mbox{ can be contracted to a point inside } \nonumber \\
    & \enskip \enskip \enskip \enskip  \ck{ y \in B(x,r) \, |\, \dist(y, E) > \epsilon_0 r }; \label{e:gwec2} \\
    & \mbox{ch}(S) \cap E \neq \emptyset, \label{e:gwec3}
\end{align}
where $\mbox{ch}(S)$ is the convex hull of $S$.
We then put 
\begin{align*}
    \good{GWEC}(\ve_0) := E \times \R_+ \setminus \bad{GWEC}(\ve_0). 
\end{align*}
We want to check that we can apply Lemma \ref{l:meta} with this quantitative property. That the GWEC guarantees uniform rectifiability is Theorem 3.28 in \cite{of-and-on}. All that's left to do is to prove that GWEC is continuous.

\begin{lemma}
The quantitative property GWEC with parameter $\ve_0>0$ is $(\ve_1, C_1, C_2)$-continuous, for all $C_1 \geq 3$, for all $C_2 \geq 1$ and whenever $\ve_1$ is sufficiently small with respect to $\ve_0,C_{1},$ and $C_{2}$. 
\end{lemma}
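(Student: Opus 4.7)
The plan is to argue by contrapositive. Assume the hypotheses of $(\ve_1, C_1, C_2)$-continuity: $(x_B, r_B)\in\good{GWEC}_{E_1}(\ve_0)$ and $d_{C_2 B}(E_1, E_2)<\ve_1$, and suppose for contradiction that there is a ball $B'$ satisfying $C_2 B'\subset B$ and $r_{B'}\geq r_B/C_1$ but $(x_{B'}, r_{B'})\in\bad{GWEC}_{E_2}(c_1\ve_0)$, where I take $c_1 = 3C_1$ (a constant depending only on $C_1$). By definition this yields an $(n-d-1)$-sphere $S'\subset B(x_{B'}, r_{B'})$ satisfying (\ref{e:gwec1})--(\ref{e:gwec3}) relative to $E_2$ at scale $r_{B'}$ with parameter $c_1\ve_0$. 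I will manufacture from $S'$ an $(n-d-1)$-sphere $S\subset B(x_B, r_B)$ witnessing $(x_B, r_B)\in\bad{GWEC}_{E_1}(\ve_0)$, contradicting the hypothesis.

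The key move is a rigid translation chosen to repair condition (\ref{e:gwec3}), which is the only one that resists the naive choice $S=S'$. Pick any $z\in\mbox{ch}(S')\cap E_2$. Since $z\in B'\subset C_2 B$, the Hausdorff-closeness gives some $z'\in E_1$ with $|z-z'|<\ve_1 C_2 r_B$. Set $S:=S'+(z'-z)$; since translation commutes with convex hull, $\mbox{ch}(S)=\mbox{ch}(S')+(z'-z)$, so $z'\in\mbox{ch}(S)\cap E_1$, which is exactly (\ref{e:gwec3}) for $S$ and $E_1$.

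To verify (\ref{e:gwec1}) I estimate $\dist(S, E_1)$ by splitting into two cases. For $q\in E_1\cap C_2 B$, Hausdorff-closeness yields $q'\in E_2$ with $|q-q'|<\ve_1 C_2 r_B$, so $\dist(S, q)\geq \dist(S', E_2)-|z'-z|-|q-q'|> c_1\ve_0 r_{B'}-2\ve_1 C_2 r_B$; with $c_1=3C_1$ and $r_{B'}\geq r_B/C_1$ this exceeds $\ve_0 r_B$ once $\ve_1$ is small relative to $\ve_0/C_2$. For $q\in E_1\setminus C_2 B$ I use $|q-x_B|\geq C_2 r_B$ together with the margin $(C_2-1)r_{B'}$ provided by $C_2 B'\subset B$ to obtain both $S\subset B$ and $\dist(S,q)\gtrsim (C_2-1)r_B-\ve_1 C_2 r_B > \ve_0 r_B$; this is the step that pins down the threshold $\ve_1\lesssim(C_2-1)/(C_1 C_2)$. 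Finally, for (\ref{e:gwec2}), if $H'\colon S'\times[0,1]\to V'$ contracts $S'$ to a point inside $V':=\{y\in B':\dist(y,E_2)>c_1\ve_0 r_{B'}\}$, then $H(s,t):=H'(s-(z'-z),t)+(z'-z)$ is a continuous contraction of $S$; applying the same two-case distance estimate to every image point $H(s,t)$ shows $H(S\times[0,1])\subset\{y\in B:\dist(y, E_1)>\ve_0 r_B\}$, so $S$ is contractible in the correct set.

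The main obstacle is precisely the rigidity of condition (\ref{e:gwec3}): the set $\mbox{ch}(S')$ is a lower-dimensional disk in $\R^n$, so a point $z'\in E_1$ near $z\in\mbox{ch}(S')\cap E_2$ typically does not lie in $\mbox{ch}(S')$ itself, and the naive strategy of reusing $S'$ for $E_1$ fails at (\ref{e:gwec3}). The translation $S\mapsto S+(z'-z)$ drags the disk so that $z'$ lies inside it, but then one must pay a translation cost of at most $\ve_1 C_2 r_B$ in conditions (\ref{e:gwec1}) and (\ref{e:gwec2}); this cost is absorbed by the gap $c_1\ve_0 r_{B'}-\ve_0 r_B = (c_1/C_1-1)\ve_0 r_B$, which is positive as long as $c_1>C_1$, and is large compared to $\ve_1 C_2 r_B$ once $\ve_1$ is sufficiently small with respect to $\ve_0$, $C_1$, and $C_2$.
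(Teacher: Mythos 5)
Your construction is the same one the paper uses: translate the bad sphere $S'$ by the vector $z'-z$ joining a point $z\in \mathrm{ch}(S')\cap E_2$ to a nearby point $z'\in E_1$, observe that translation commutes with convex hulls to recover \eqref{e:gwec3}, transfer the distance bound through the Hausdorff closeness with an error $\lesssim \ve_1 C_2 r_B$ absorbed by the gap $c_1\ve_0 r_{B'}\geq (c_1/C_1)\ve_0 r_B$, and push the given contraction of $S'$ along (you translate the whole homotopy; the paper concatenates a straight-line translation homotopy with the original contraction — cosmetically different, same content). Your choice $c_1=3C_1$ and the smallness threshold $\ve_1\lesssim \ve_0/C_2$ in the near-field case match the paper's computation.

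The one place you diverge is how you control $\dist(S,q)$ (and the contraction image) against points $q\in E_1\setminus C_2B$, and as written that step does not cover the stated parameter range. Your bound $\dist(S,q)\gtrsim (C_2-1)r_B-\ve_1C_2 r_B>\ve_0 r_B$, and likewise the claim that the translated contraction stays inside $B$, rely on the margin $(C_2-1)r_{B'}$ coming from $C_2B'\subset B$; this is vacuous at $C_2=1$ and fails whenever $C_2-1\leq \ve_0$, even though the lemma is asserted for all $C_2\geq 1$, and your threshold $\ve_1\lesssim (C_2-1)/(C_1C_2)$ degenerates there. The paper avoids introducing this case by localizing instead of using the margin: since $x_B\in E_1$ and $S$ lies in (a tiny enlargement of) $B$, the closest point $y\in E_1$ to $S$ automatically satisfies $y\in 2B$, so only the Hausdorff closeness is ever invoked — no far-field case is needed. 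To be fair, the paper's step ``$y\in 2B$, hence $\dist(y,E_2)<\ve_1C_2r_B$'' tacitly uses the closeness hypothesis on $2B$, i.e.\ effectively $C_2\geq 2$, so the boundary range $C_2\in[1,2)$ is delicate in both arguments; but for $C_2\geq 2$ (the regime actually used, since in Lemma \ref{l:meta} one may take $C_2$ large) the closest-point localization closes your gap cleanly, and you should replace your far-field case by it rather than rely on the $(C_2-1)$ margin.
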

\begin{proof}
Let $E_1$ and $E_2$ be two subsets of $\R^n$. Let $B=B(x_B, r_B)$ be a ball centered on $E_1$ so that $(x_B, r_B) \in \good{GWEC}_{E_1}(\ve_0)$ and 
\begin{align} \label{e:gwec-E1-E2-dist}
    d_{C_2B} (E_1,E_2) < \ve_1 C_2 r_B.
\end{align}
We want to find a constant $c_1$ so that, for any ball $B'=B(x_B', r_B')$ centered on $E_2$ and with $2B' \subset B$ and $r_B' \geq r_B/C_1$, we have that $(x_B', r_B') \in \good{GWEC}_{E_2}(c_1 \ve_0)$. 

We argue by contradiction. Suppose that for some $c_1$ (to be determined), we can find a sphere $S'$ as in \eqref{e:gwec1}, \eqref{e:gwec2} and \eqref{e:gwec3} for the ball $B'$. We will construct a sphere $S$ for $B$ satisfying the same three conditions: this will contradict the hypothesis that $B$ is a good ball. 

Let $ \hat y \in E_2 \cap \mbox{ch}(S')$; note that in particular $\hat y \in B(x_B', r_B') \subset B$, and thus we can find a point $ \hat x  \in E_1$ with $|\hat y- \hat x| < \ve_1 C_2 r_B$ (using \eqref{e:gwec-E1-E2-dist}). If $W'$ is the $(n-d)$-dimensional plane which contains $S'$, we put $W=W'+(\hat x- \hat y)$. Hence we let $S$ denote the sphere in $W$ with center $\mbox{center}(S') + (\hat x - \hat y)$ and radius equal to that of $S'$.
We claim that $S$ satisfies \eqref{e:gwec1}, \eqref{e:gwec2} and \eqref{e:gwec3} relative to the pair $(x_B, r_B)$. Note first that 
\begin{align}\label{e:S-S'}
    S \subset N_{2 C_2 \ve_1 r_B} (S').
\end{align}
We show that  $\dist(E_1,  N_{2 C_2 \ve_1 r_B} (S')) > \ve_0 r_B$.
Let $s' \in S'$ and $y \in E_1$ be closest to each other. Since $s'\in B$, we must have $y\in 2B$. Let $s\in S$ be closest to $s'$, so $|s-s'|<\epsilon_{1} C_{2} r_{B}$. Let $y'\in E_{2}$ be closest to $y$; then as $y\in 2B$, $|y-y'|<\epsilon_{1} C_{2}r_{B}$; then we have that 
\begin{align*}
    \dist(E_1,  N_{2 C_2 \ve_1 r_B} (S'))&  = |y-s'| \geq |y'-s| - |s'-s| - |y-y'| \\
    & \geq \dist(E_2, S) - 2 \ve_1 C_2 r_B\\
    & \geq c_1 \ve_0 r_B' - 2 \ve_1 C_2 r_B \\
    & \geq \frac{c_1}{C_1} \ve_0 r_B - 2 \ve_1 C_2 r_B .
\end{align*}
Now, choosing $\ve_1$ small enough (depending on $\ve_0$) and $c_1$ sufficiently large (depending on $C_1$), it follows that 
\begin{align*}
    \dist(E_1, S) \geq \dist(E_{1}, N_{2 C_2 \ve_1 r_B} (S'))> \ve_0 r_B.
\end{align*}
This proves \eqref{e:gwec1} for $(x_B, r_B)$. 

We now need to show that we can contract $S$ to a point inside the set $\ck{ y \in  B(x_B,r_B) \, |\, \dist(y, E_1) > \ve_0 r_B}$. To see this, we use \eqref{e:S-S'}: if we denote by $Q_t$ the contraction of $S'$ to a point, then $\dist(Q_{t},E_{2})> c_{1}\epsilon_{0} r_{B}$.  Denote by $\{T_t\}_{0 \leq t \leq 1}$ the homotopy $T_{t}(x) = x+t(\hat{y}-\hat{x})$, so that $T_0(S) = S$, $T_1(S) = S'$ and $T_t(S')$ is a $(n-d-1)$-dimensional sphere lying in the $\mbox{ch}(S \cup S')$. Then we see that $T_t(S) \subset N_{2C_2 \ve_1 r_B}(S')$, so $\dist(T_{t}(S),E_{1})\geq \epsilon_{0}r_{B}$. Thus, putting
\begin{align*}
    \wt T_t(x) := 
    \begin{cases}
    T_{2t}(x) & \mbox{ for } 0 \leq t \leq \frac{1}{2} \\
    Q_{2t-1} & \mbox{ for } \frac{1}{2} \leq t \leq 1,
    \end{cases}
\end{align*}
we see that $\wt T_t$ is the desired contraction; this settles \eqref{e:gwec2}. Moreover, \eqref{e:gwec3} holds from the definition of $S$. But this implies that $(x_B, r_B)$ belongs to $\bad{GWEC}_{E_1}(\ve_0)$. 
This is impossible, and so no sphere $S'$ satisfying $\eqref{e:gwec1}$ to \eqref{e:gwec3} can exists, and therefore $(x_B', r_B') \in \good{GWEC}_{E_2}(c_1 \ve_0)$ for $c_1$ appropriately chosen (depending on $C_1)$, and $\ve_1$ sufficiently small. 
\end{proof}

We can now apply Lemma \ref{l:meta} (and use the fact that $\mathrm{GWEC}(Q_0,\ve)\lec \mathrm{BWGL}(Q_0,c\ve)\lec \beta_{E}(Q_0)$ for some $c>0$), to obtain the following corollary. 
\begin{corollary}
Let $E$ be lower content regular, let $Q_0 \in \cD$. Then 
\begin{align*}
    \beta(Q_0) \sim \hd(Q_0) + \mathrm{GWEC}(Q_0,\ve).
\end{align*}
\end{corollary}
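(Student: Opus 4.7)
The corollary is essentially a direct application of Lemma \ref{l:meta}, together with Theorem \ref{t:AS} to obtain the reverse inequality. I would split the proof into the two directions.

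\smallskip
\textbf{Direction $\beta_E(Q_0) \lesssim \hd(Q_0) + \mathrm{GWEC}(Q_0,\ve)$.} I would verify that GWEC meets the hypotheses of Lemma \ref{l:meta} with an appropriate choice of parameters. Continuity is the lemma just proved: GWEC (with parameter $\ve_0$) is $(\ve_1, C_1, C_2)$-continuous for every $C_1\geq 3$, $C_2\geq 1$, and all sufficiently small $\ve_1$. The UR-guaranteeing property is David--Semmes \cite[Theorem 3.28]{of-and-on}, and that UR sets also satisfy the GWEC Carleson condition is standard. Thus fixing $C_2$ and $C_1$ as required by Lemma \ref{l:meta} (and then taking $\ve$ small enough), Lemma \ref{l:meta} yields
\[
\beta_E(Q_0)\lesssim_{c,C_1,\ve} \hd(Q_0) + \mathrm{GWEC}(Q_0, c_1 \ve).
\]
Relabelling $\ve$ (absorbing $c_1$) gives the desired inequality.

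\smallskip
\textbf{Direction $\hd(Q_0) + \mathrm{GWEC}(Q_0,\ve) \lesssim \beta_E(Q_0)$.} The term $\hd(Q_0)\lesssim \beta_E(Q_0)$ is immediate from Theorem \ref{t:AS}. For the GWEC sum, I would show the inclusion hinted at in the text, namely
\[
\bad{GWEC}(\ve)\cap\cD \subseteq \bad{BWGL}(c\ve,C_0)\cap\cD
\]
for some absolute $c>0$ (after perhaps passing to a controlled ancestor of the cube). Granting this, we get
\[
\mathrm{GWEC}(Q_0,\ve)\ \le\ \mathrm{BWGL}(Q_0,c\ve,C_0)\ \lesssim\ \beta_E(Q_0),
\]
the last bound being precisely Theorem \ref{t:AS}.

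\smallskip
\textbf{The geometric step.} The one place needing genuine checking is the inclusion $\bad{GWEC}\subseteq \bad{BWGL}$. The idea: if $Q\in\good{BWGL}(c\ve,C_0)$, there is a $d$-plane $P$ with $d_{C_0B_Q}(E,P)<c\ve$, so within $B_Q$ the set $E$ is trapped in a thin tube around $P$. Any sphere $S\subset B_Q$ satisfying \eqref{e:gwec3}, i.e.\ with $\mathrm{ch}(S)\cap E\neq\emptyset$, must have its convex hull meet this tube; but then the standard linking argument shows that either $S$ itself passes within distance $O(c\ve)\ell(Q)$ of $P$ (violating \eqref{e:gwec1} once $c\ve<\ve$), or $S$ cannot be contracted to a point in the complement of an $\ve$-neighborhood of $E\cap B_Q$ (violating \eqref{e:gwec2}), because contracting it would require sweeping through $P$. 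This is precisely the topological obstruction encoded by GWEC. I expect this is the only subtle step, and it amounts to a small perturbation of the corresponding argument in \cite[Part III]{of-and-on} that relates GWEC to planar approximation; the computation is straightforward but is the one place one should not be glib about constants, since $c$ must be chosen so that $c\ve$-flatness forces $\ve$-failure of all three GWEC requirements simultaneously.
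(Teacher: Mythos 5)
Your proposal is correct and follows essentially the same route as the paper: the upper bound for $\beta_E(Q_0)$ comes from Lemma \ref{l:meta} via the continuity lemma and the fact (David--Semmes) that GWEC guarantees UR, while the reverse direction uses $\mathrm{GWEC}(Q_0,\ve)\lec \mathrm{BWGL}(Q_0,c\ve)\lec\beta_E(Q_0)$ together with Theorem \ref{t:AS}. The only difference is that the paper simply asserts the comparison $\mathrm{GWEC}\lec\mathrm{BWGL}$ without proof, whereas you sketch the (correct) linking/flatness argument behind it.
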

\appendix

\section{The Traveling Salesman Theorem}

In this section we prove Theorem \ref{t:TST}. We begin by recalling the original Traveling Salesman Theorem for higher dimensional sets from \cite[Theorem 3.2 and 3.3]{AS18}.

\begin{theorem}
Let $1\leq d<n$ and $E\subseteq \bR^{n}$ be a closed.
Suppose that $E$ is $(c,d)$-lower content regular and let $\cD$ denote the Christ-David cubes for $E$. Let

\begin{enumerate}
    \item Let $C_{0}>1$ and $A>\max\{C_0,10^5\}>1$, $p\geq 1$, and $\ve>0$ be given. For $R\in \cD$, let
    Then for $R\in \cD$,
\begin{equation}
\label{e:thmiii-upper-bound}
\cH^{d}(R)+  {\rm BWGL}(R,\ve,C_0)
 \lec_{A,n,c, C_0 \ve} \ell(R)^{d}+\sum_{Q\subseteq R} \beta_{E}^{d,p}(AB_Q)^{2}\ell(Q)^{d}.
\end{equation}
Furthermore, if the right hand side of \eqref{e:thmiii-upper-bound} is finite, then $E$ is $d$-rectifiable.
\item For any $A>1$ and $1\leq p<p(d)$, there is $C_{0}\gg A$ and $\ve_{0}=\ve_{0}(n,A,p,c)>0$ such that the following holds. Let  $0<\ve<\ve_{0}$.  Then
\begin{equation}
\ell(R)^{d}+\sum_{Q\subseteq R} \beta_{E}^{d,p}(AB_Q)^{2}\ell(Q)^{d} \sim_{A,n,c, \ve} \cH^{d}(R)+   {\rm BWGL}(R,\ve,C_0)
\label{e:beta<hd2}
\end{equation}
\end{enumerate}
\end{theorem}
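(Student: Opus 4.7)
The statement is, modulo notation and parameter bookkeeping, the combination of Theorems 3.2 and 3.3 of \cite{AS18}; indeed the opening sentence of this appendix describes it as a recollection of that result. My plan is therefore to invoke those two theorems directly, after checking that our hypotheses ($A > \max\{C_0, 10^5\}$, $p \geq 1$ for part~(1); $A > 1$, $p < p(d)$, $C_0 \gg A$, $\ve < \ve_0$ for part~(2)) line up with theirs.

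For part (1), the upper bound
\[
\cH^{d}(R)+{\rm BWGL}(R,\ve,C_0)\;\lesssim\;\ell(R)^{d}+\sum_{Q\subseteq R} \beta_{E}^{d,p}(AB_Q)^{2}\ell(Q)^{d}
\]
is a direct quotation of \cite[Theorem 3.2]{AS18}, and the rectifiability conclusion is its immediate corollary. The strategy of that proof is: when the right-hand side is finite, a multiscale Jones-style construction produces a Lipschitz ``approximating surface'' through $E$ whose $\cH^d$-measure is controlled by the $\beta$-sum, giving the bound on $\cH^d(R)$; the packing bound on ${\rm BWGL}(R,\ve,C_0)$ is comparatively elementary, because for $A>C_0$ one has the pointwise estimate $\beta^{d,p}_E(AB_Q)\gtrsim_\ve 1$ for every $Q\in{\rm BWGL}(\ve,C_0)$, from which the Carleson sum follows immediately.

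For part (2), the full equivalence is \cite[Theorem 3.3]{AS18}. The non-trivial direction, controlling the $\beta$-sum by $\cH^d(R)+{\rm BWGL}$, is proved there via a corona decomposition of $E$ by stopping-time regions on which the $\beta$-sum is shown to be comparable to the size of the top cube; the restriction $p<p(d)$ enters through a Besicovitch-type interpolation that converts $L^p$ distance estimates into bounds on the Hausdorff content $\cH^d_\infty$. The principal obstacle throughout \cite{AS18} is the absence of Ahlfors regularity, navigated by systematically replacing $\cH^d$ with $\cH^d_\infty$ in the definition of $\beta$ and by adapting the corona machinery to the lower content regular setting.

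An alternative self-contained route, which I would note but not pursue, uses the Main Lemma of this paper: approximate $E$ on each tree $\cT$ by the Ahlfors regular set $E(\cT)$, apply the classical David--Semmes Traveling Salesman theorem to $E(\cT)$, and transfer the $\beta$-estimates back to $E$ via Lemma~\ref{l:AS-beta}. This is exactly the template used in Section~\ref{s:QP} for the BAUP, LS, LCV, and GWEC conditions, and would reproduce part~(2) from first principles; given the availability of \cite{AS18}, however, direct citation is the cleaner choice, and is what I would write.
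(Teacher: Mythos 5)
Your proposal matches the paper's approach exactly: the paper does not prove this statement at all but simply recalls it as \cite[Theorems 3.2 and 3.3]{AS18}, so direct citation is precisely what the paper does, and your parameter-matching plus quotation suffices. One aside in your sketch is mathematically inaccurate — a cube $Q\in{\rm BWGL}(\ve,C_0)$ need \emph{not} satisfy $\beta_{E}^{d,p}(AB_Q)\gtrsim_{\ve}1$, since the $\beta$-number is one-sided and can vanish for a flat set with a large hole, which is why bounding the BWGL sum in \cite{AS18} is genuinely nontrivial — but this does not affect your proof, since you quote the result rather than reprove it.
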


If we set 
\[
\beta_{E,A,p}(R):= \ell(R)^{d}+\sum_{Q\subseteq R}\beta_{E}^{d,p}(AB_Q)^{2}\ell(Q)^{d},\]
we will now show
\begin{equation}
    \label{e:allcomparable}
\beta_{E,A,p}(R)\sim_{A,p} \beta_{E,3,2}(R)=: \beta_{E}(R).
\end{equation}

Indeed, one can check that $\beta_{E}^{d,p}(3B_Q)\lec_{A,d,p} \beta_{E}^{d,p}(AB_Q)^{2}$ \cite[Lemma 2.11]{AS18}. Moreover, note that for every $Q\subseteq R$, if $Q^{N}$ denotes the $N$th ancestor of $Q$, then there is $N$ so that $3B_{Q^{N}}\supseteq AB_{Q}$. With these observations, we have 
\[
\beta_{E,3,p}(R)
\lec_{A,p}
\beta_{E,A,p}(R)
\lec_{N} \ell(R)^{d}+ \sum_{Q^{N}\subseteq R}  \beta_{E}^{d,p}(AB_Q)^{2}\ell(Q)^{d}
\lec_{p} \beta_{E,3,p}(R).
\]
Furthermore, by \cite[Lemma 2.13]{AS18}, we see that $\beta_{E}^{d,1}\lec \beta_{E}^{d,p}$ for all $p>1$. Thus, by the Traveling Salesman Theorem, for $A\gg C_0 \gg 3$
\[
\beta_{E,3,p}(R)
\lec {\rm BWGL}(R,\ve,C_0)
\lec \beta_{E,A,1}(R)
\lec \beta_{E,3,1}(R)
\lec \beta_{E,3,2}(R).
\]

This completes the proof of Theorem \ref{t:TST}.

\def\cprime{$'$}


\begin{thebibliography}{AHM+17}

\bibliographystyle{alpha}

\bibitem[Azz]{Azz19}
J.~Azzam,
\newblock Harmonic Measure and the Analyst's Traveling Salesman Theorem
\newblock {\em Arxiv  preprint} 2019. \url{https://arxiv.org/abs/1905.09057}

\bibitem[AHM+17]{AHMNT17}
J.~Azzam, S.~Hofmann, J.M. Martell, K.~Nystr\"om, and T.~Toro.
\newblock A new characterization of chord-arc domains.
\newblock {\em J. Eur. Math. Soc. (JEMS)}, 19(4):967--981, 2017.

\bibitem[AS18]{AS18}
J. Azzam and R. Schul.
\newblock An analyst's traveling salesman theorem for sets of dimension larger
  than one.
\newblock {\em Math. Ann.}, 370(3-4):1389--1476, 2018.

\bibitem[Chr90]{Chr90}
M.~Christ.
\newblock A {$T(b)$} theorem with remarks on analytic capacity and the {C}auchy
  integral.
\newblock {\em Colloq. Math.}, 60/61(2):601--628, 1990.

\bibitem[Dav88]{Dav88}
G.~David.
\newblock Morceaux de graphes lipschitziens et int\'egrales singuli\`eres sur
  une surface.
\newblock {\em Rev. Mat. Iberoamericana}, 4(1):73--114, 1988.

\bibitem[Dav91]{Dav91}
G.~David.
\newblock {\em Wavelets and singular integrals on curves and surfaces}, volume
  1465 of {\em Lecture Notes in Mathematics}.
\newblock Springer-Verlag, Berlin, 1991.



\bibitem[DS91]{DS}
G.~David and S.~W. Semmes.
\newblock Singular integrals and rectifiable sets in {${\bf R}^n$}: {B}eyond
  {L}ipschitz graphs.
\newblock {\em Ast\'erisque}, (193):152, 1991.

\bibitem[DS93]{of-and-on}
G.~David and S.~W. Semmes.
\newblock {\em Analysis of and on uniformly rectifiable sets}, volume~38 of
  {\em Mathematical Surveys and Monographs}.
\newblock American Mathematical Society, Providence, RI, 1993.

\bibitem[DS00]{DS_quasi}
G.~David and S.~W.~Semmes. \newblock {\em Uniform rectifiability and quasiminimizing sets of arbitrary codimension}.
\newblock American Mathematical Soc., 2000.

\bibitem[DT12]{DT12}
G. David and T. Toro. \newblock \emph{Reifenberg parameterizations for sets with holes} \newblock
  American Mathematical Soc., 2012.
  
\bibitem[DS17]{DS17}
G.~C. David and R.~Schul.
\newblock The analyst's traveling salesman theorem in graph inverse limits.
\newblock {\em Ann. Acad. Sci. Fenn. Math.}, 42(2):649--692, 2017.

\bibitem[DS19]{DS19}
G.~C. David and R.~Schul.
\newblock A sharp necessary condition for rectifiable curves in metric spaces.
\newblock {\em arXiv preprint arXiv:1902.04030}, 2019.


\bibitem[FFP07]{FFP07}
F.~Ferrari, B.~Franchi, and H.~Pajot.
\newblock The geometric traveling salesman problem in the {H}eisenberg group.
\newblock {\em Rev. Mat. Iberoam.}, 23(2):437--480, 2007.




\bibitem[HLMN17]{HLMN17}
S.~Hofmann, P.~Le, J.M. Martell, and K.~Nystr\"om.
\newblock The weak-{$A_\infty$} property of harmonic and {$p$}-harmonic
  measures implies uniform rectifiability.
\newblock {\em Anal. PDE}, 10(3):513--558, 2017.

\bibitem[HM14]{HM14}
S.~Hofmann and J.~M. Martell.
\newblock Uniform rectifiability and harmonic measure {I}: {U}niform
  rectifiability implies {P}oisson kernels in {$L^p$}.
\newblock {\em Ann. Sci. \'Ec. Norm. Sup\'er. (4)}, 47(3):577--654, 2014.

\bibitem[HM18]{HM18}
S.~Hofmann and J.~M. Martell.
\newblock On quantitative absolute continuity of harmonic measure and big piece
  approximation by chord-arc domains.
\newblock {\em arXiv preprint arXiv:1712.03696}, 2018.


\bibitem[HM12]{HM12}
T.~Hyt{\"o}nen and H.~Martikainen.
\newblock Non-homogeneous {$Tb$} theorem and random dyadic cubes on metric
  measure spaces.
\newblock {\em J. Geom. Anal.}, 22(4):1071--1107, 2012.

\bibitem[Jon90]{Jon90}
P.~W. Jones.
\newblock Rectifiable sets and the traveling salesman problem.
\newblock {\em Invent. Math.}, 102(1):1--15, 1990.



\bibitem[Li19]{Li19}
Sean Li.
\newblock Stratified $\beta$-numbers and traveling salesman in carnot groups.
\newblock {\em arXiv preprint}, 2019. \url{https://arxiv.org/abs/1902.03268}

\bibitem[LS16a]{LS16a}
S.~Li and R.~Schul.
\newblock The traveling salesman problem in the {H}eisenberg group: upper
  bounding curvature.
\newblock {\em Trans. Amer. Math. Soc.}, 368(7):4585--4620, 2016.

\bibitem[LS16b]{LS16b}
S.~Li and R.~Schul.
\newblock An upper bound for the length of a traveling salesman path in the
  {H}eisenberg group.
\newblock {\em Rev. Mat. Iberoam.}, 32(2):391--417, 2016.



\bibitem[NTV14]{NTV14}
F.~Nazarov, X.~Tolsa, and A.~Volberg.
\newblock On the uniform rectifiability of {AD}-regular measures with bounded
  {R}iesz transform operator: the case of codimension 1.
\newblock {\em Acta Math.}, 213(2):237--321, 2014.

\bibitem[Oki92]{Oki92}
K.~Okikiolu.
\newblock Characterization of subsets of rectifiable curves in {${\bf R}^n$}.
\newblock {\em J. London Math. Soc. (2)}, 46(2):336--348, 1992.

\bibitem[Sch07]{Sch07}
R.~Schul.
\newblock Subsets of rectifiable curves in {H}ilbert space---the analyst's
  {TSP}.
\newblock {\em J. Anal. Math.}, 103:331--375, 2007.

\bibitem[T]{tolsa}
X Tolsa. Uniform measures and uniform rectifiability. {\em J. London Math. Soc.}, 92(1):1-8, 2015.

\bibitem[Vil]{Vil19}
M.~Villa.
\newblock Sets with topology, the Analyst's TST, and applications. \textit{Arxiv preprint}, 2019. \url{https://arxiv.org/abs/1908.10289}

\end{thebibliography}
\end{document}